\documentclass{book}
\pdfoutput=1
\pagestyle{plain}

\setlength\paperheight{297mm}
\setlength\paperwidth{210mm}
\setlength\oddsidemargin{66pt}
\setlength\evensidemargin{53pt}
\setlength\marginparwidth{47pt}
\setlength\marginparsep{6pt}
\setlength\topmargin{35pt}
\setlength\headheight{13pt}
\setlength\footskip{26pt}
\setlength\topskip{10pt}
\setlength\headsep{11pt}
\setlength\textheight{510pt}
\addtolength\textheight{\topskip}
\setlength\textwidth{26pc}
\setlength\lineskip{1pt}
\setlength\normallineskip{1pt}

\setlength\parskip{0pt plus .25pt}
\setlength\parindent{1em}
\listparindent\parindent
\setlength\partopsep{0pt plus 1pt}

\usepackage{amsmath,amsthm,graphicx,tikz,url,stmaryrd,enumitem,txfonts,microtype}
\SetSymbolFont{stmry}{bold}{U}{stmry}{m}{n}

\theoremstyle{definition}
\newtheorem{definition}{Definition}[section]
\newtheorem{example}[definition]{Example}
\newtheorem{question}[definition]{Question}
\theoremstyle{plain}
\newtheorem{conjecture}[definition]{Conjecture}
\newtheorem{theorem}[definition]{Theorem}
\newtheorem{lemma}[definition]{Lemma}
\newtheorem{corollary}[definition]{Corollary}

\usepackage[bf,small,center]{titlesec}
\titlespacing*{\chapter}{0pt}{50pt}{40pt}
\titlespacing*{\section}{0pt}{4.5ex plus 1ex minus .2ex}{2.3ex plus .2ex}

\setlist[enumerate]{label=\rm{(\roman*)},topsep=2pt,itemsep=0pt,leftmargin=7mm, parsep=0pt plus 2pt,partopsep=0pt plus 1pt}

\makeatletter
\renewenvironment{thebibliography}[1]
     {\section*{References}
      \@mkboth{\MakeUppercase\bibname}{\MakeUppercase\bibname}%
      \list{\@biblabel{\@arabic\c@enumiv}}%
           {\settowidth\labelwidth{\@biblabel{#1}}%
            \leftmargin\labelwidth
            \advance\leftmargin\labelsep
            \@openbib@code
            \usecounter{enumiv}%
            \let\p@enumiv\@empty
            \renewcommand\theenumiv{\@arabic\c@enumiv}}%
      \sloppy
      \clubpenalty4000
      \@clubpenalty \clubpenalty
      \widowpenalty4000%
      \sfcode`\.\@m}
     {\def\@noitemerr
       {\@latex@warning{Empty `thebibliography' environment}}%
      \endlist}
\makeatother

\renewenvironment{thebibliography}[1]
{ \begin{oldthebibliography}{#1}
  
  \setlength{\parskip}{0pt}
  \setlength{\itemsep}{1pt}
  \bgroup\small }
{ \egroup \end{oldthebibliography} }

\numberwithin{figure}{section}
\numberwithin{equation}{section}

\usepackage[symbol,hang,norule]{footmisc}
\setlength\footnotemargin{0.5em}

\makeatletter
\g@addto@macro\normalsize{%
  \setlength\abovedisplayskip{0.5\baselineskip plus 0.4\baselineskip}
  \setlength\belowdisplayskip{0.5\baselineskip plus 0.4\baselineskip}
  \setlength\abovedisplayshortskip{-0.3\baselineskip}
  \setlength\belowdisplayshortskip{0.5\baselineskip plus 0.4\baselineskip}
}
\makeatother
\setlength{\belowcaptionskip}{-10pt}

\renewcommand{\:}{\colon}
\renewcommand{\leq}{\leqslant}
\renewcommand{\geq}{\geqslant}
\newcommand{\leqn}{\trianglelefteqslant}

\newcommand{\<}{\langle}
\renewcommand{\>}{\rangle}
\renewcommand{\mod}[1]{\mathrm{ \ } (\mathrm{mod\ } #1)}
\newcommand{\Aut}{\mathrm{Aut}}
\newcommand{\Homeo}{\mathrm{Homeo}}
\newcommand{\Sym}{\mathrm{Sym}}
\newcommand{\Alt}{\mathrm{Alt}}
\newcommand{\GL}{\mathrm{GL}}
\newcommand{\PSL}{\mathrm{PSL}}
\newcommand{\Sp}{\mathrm{Sp}}
\newcommand{\e}{\varepsilon}
\newcommand{\p}{\varphi}
\newcommand{\Nat}{\mathbb{N}}
\newcommand{\Int}{\mathbb{Z}}
\newcommand{\F}{\mathbb{F}}
\renewcommand{\S}{\mathbb{S}}
\newcommand{\M}{\mathcal{M}}
\newcommand{\X}{\mathcal{X}}
\newcommand{\g}{\mathfrak{g}}
\newcommand{\C}{\mathfrak{C}}
\newcommand{\Sm}[1]{S\!_{#1}}

\begin{document}

\chapter*{The spread of finite and infinite groups \\ {\normalfont\large Scott Harper\footnotemark}}

\footnotetext[1]{School of Mathematics and Statistics, University of St Andrews, St Andrews, KY16 9SS, UK \newline \url{scott.harper@st-andrews.ac.uk}}

\section*{Abstract}\trivlist\item[]
It is well known that every finite simple group has a generating pair. Moreover, Guralnick and Kantor proved that every finite simple group has the stronger property, known as $\frac{3}{2}$-generation, that every nontrivial element is contained in a generating pair. Much more recently, this result has been generalised in three different directions, which form the basis of this survey article. First, we look at some stronger forms of $\frac{3}{2}$-generation that the finite simple groups satisfy, which are described in terms of spread and uniform domination. Next, we discuss the recent classification of the finite $\frac{3}{2}$-generated groups. Finally, we turn our attention to infinite groups, focusing on the recent discovery that the finitely presented simple groups of Thompson are also $\frac{3}{2}$-generated, as are many of their generalisations. Throughout the article we pose open questions in this area, and we highlight connections with other areas of group theory.
\endtrivlist\addvspace{26pt}

\section{Introduction} \label{s:intro}

Every finite simple group can be generated by two elements. This well-known result was proved for most finite simple groups by Steinberg in 1962 \cite{ref:Steinberg62} and completed via the Classification of Finite Simple Groups (see \cite{ref:AschbacherGuralnick84}). Much more is now known about generating pairs for finite simple groups. For instance, for any nonabelian finite simple group $G$, almost all pairs of elements generate $G$ \cite{ref:KantorLubotzky90,ref:LiebeckShalev95}, $G$ has an invariable generating pair \cite{ref:GuralnickMalle12JLMS,ref:KantorLubotzkyShalev11}, and, with only finitely many exceptions, $G$ can be generated by a pair of elements where one has order $2$ and the other has order either $3$ or $5$ \cite{ref:LiebeckShalev96Ann,ref:LubeckMalle99}. 

The particular generation property of finite simple groups that this survey focuses on was established by Guralnick and Kantor \cite{ref:GuralnickKantor00} and independently by Stein \cite{ref:Stein98}. They proved that if $G$ is a finite simple group, then every nontrivial element of $G$ is contained in a generating pair. Groups with this property are said to be \emph{$\frac{3}{2}$-generated}. We will survey the recent work (mostly from the past five years) that addresses natural questions arising from this theorem.

Section~\ref{s:finite} focuses on finite groups and considers recent progress towards answering two natural questions. Do finite simple groups satisfy stronger versions of $\frac{3}{2}$-generation? Which other finite groups are $\frac{3}{2}$-generated? Regarding the first, in Sections~\ref{ss:finite_spread} and~\ref{ss:finite_udn}, we will meet two strong versions of $\frac{3}{2}$-generation, namely (uniform) spread and total/uniform domination. Regarding the second, Section~\ref{ss:finite_bgh} presents the recent classification of the finite $\frac{3}{2}$-generated groups established by Burness, Guralnick and Harper in 2021 \cite{ref:BurnessGuralnickHarper21}. All these ideas are brought together as we discuss the generating graph in Section~\ref{ss:finite_graph}. Section~\ref{ss:finite_app} rounds off the first half by highlighting applications of spread to word maps, the product replacement graph and the soluble radical of a group.

Section~\ref{s:infinite} focuses on infinite groups and, in particular, whether any results on the $\frac{3}{2}$-generation of finite groups extend to the realm of infinite groups. After discussing this in general terms in Sections~\ref{ss:infinite_intro} and~\ref{ss:infinite_soluble}, our focus shifts to the finitely presented infinite simple groups of Richard Thompson in Sections~\ref{ss:infinite_thompson_introduction} to~\ref{ss:infinite_thompson_t}. Here we survey the ongoing work of Bleak, Donoven, Golan, Harper, Hyde and Skipper, which reveals strong parallels between the $\frac{3}{2}$-generation of these infinite simple groups and the finite simple groups. Section~\ref{ss:infinite_thompson_introduction} serves as an introduction to Thompson's groups for any reader unfamiliar with them.

This survey is based on my one-hour lecture at \emph{Groups St Andrews 2022} at the University of Newcastle, and I thank the organisers for the opportunity to present at such an enjoyable and interesting conference. I have restricted this survey to the subject of spread and have barely discussed other aspects of generation. Even regarding the spread of finite simple groups, much more could be said, especially regarding the methods involved in proving the results. Both of these omissions from this survey are discussed amply in Burness' survey article from \emph{Groups St Andrews 2017} \cite{ref:Burness19}, which is one reason for deciding to focus in this article on the progress made in the past five years.

\vspace{0.5\baselineskip}

\textbf{Acknowledgements. } The author wrote this survey when he was first a Heilbronn Research Fellow and then a Leverhulme Early Career Fellow, and he thanks the Heilbronn Institute for Mathematical Research and the Leverhulme Trust. He thanks Tim Burness, Charles Cox, Bob Guralnick, Jeremy Rickard and a referee for their helpful comments, and he also thanks Guralnick for his input on Application~3, especially his suggested proof of Theorem~\ref{thm:x_radical}.

\section{Finite Groups} \label{s:finite}

\subsection{Generating pairs} \label{ss:finite_intro}

It is easy to write down a pair of generators for each alternating group $A_n$: for instance, if $n$ is odd, then $A_n = \< (1 \, 2 \, 3), (1 \, 2 \, \dots \, n) \>$. In 1962, Steinberg \cite{ref:Steinberg62} proved that every finite simple group of Lie type is $2$-generated, by exhibiting an explicit pair of generators. In light of the Classification of Finite Simple Groups, once the sporadic groups were all shown to be $2$-generated, it became known that every finite simple group is $2$-generated \cite{ref:AschbacherGuralnick84}. Since then, numerous stronger versions of this theorem have been proved (see Burness' survey \cite{ref:Burness19}). 

Even as early as 1962, Steinberg raised the possibility of stronger versions of his $2$-generation result \cite{ref:Steinberg62}:

\trivlist\item[]\small
``It is possible that one of the generators can be chosen of order 2, as is the case for the projective unimodular group, or even that one of the generators can be chosen as an arbitrary element other than the identity, as is the case for the alternating groups. Either of these results, if true, would quite likely require methods much more detailed than those used here.''
\endtrivlist\normalsize

That is, Steinberg is suggesting the possibility that for a finite simple group $G$ one might be able to replace just the existence of $x,y \in G$ such that $\< x, y \> = G$, with the stronger statement that for all nontrivial elements $x \in G$ there exists $y \in G$ such that $\< x,y \> = G$. He alludes to the fact that this much stronger condition is known to hold for the alternating groups, which was shown by Piccard in 1939 \cite{ref:Piccard39}. In the following example, we will prove this result on alternating groups, but with different methods than Piccard used.

\begin{example} \label{ex:alternating}
Let $G = A_n$ for $n \geq 5$. We will focus on the case $n \equiv 0 \mod{4}$ and then address the remaining cases at the end. 

Write $n=4m$ and let $s$ have cycle shape $[2m-1,2m+1]$, that is, let $s$ be a product of disjoint cycles of lengths $2m-1$ and $2m+1$. Visibly, $s$ is contained in a maximal subgroup $H \leq G$ of type $(\Sm{2m-1} \times \Sm{2m+1}) \cap G$. We claim that no further maximal subgroups of $G$ contain $s$. Imprimitive maximal subgroups are ruled out since $2m-1$ and $2m+1$ are coprime. In addition, a theorem of Marggraf \cite[Theorem~13.5]{ref:Wielandt64} ensures that no proper primitive subgroup of $A_n$ contains a $k$-cycle for $k < \frac{n}{2}$, so $s$ is contained in no primitive maximal subgroups as a power of $s$ is a $(2m-1)$-cycle. 

Now let $x$ be an arbitrary nontrivial element of $G$. Choosing $g$ such that $x$ moves some point from the $(2m-1)$-cycle of $s^g$ to a point in the $(2m+1)$-cycle of $s^g$ gives $x \not\in H^g$. This means that no maximal subgroup of $G$ contains both $x$ and $s^g$, so $\< x, s^g \> = G$. In particular, every nontrivial element of $G$ is contained in a generating pair. 

We now address the other cases, but we assume that $n \geq 25$ for exposition. If $n \equiv 2 \mod{4}$, then we choose $s$ with cycle shape $[2m-1,2m+3]$ (where $n=4m+2$) and proceed as above but now the unique maximal overgroup has type $(\Sm{2m-1} \times \Sm{2m+3}) \cap G$. A similar argument works for odd $n$. Here $s$ has cycle shape $[m-2,m,m+2]$ if $n=3m$, $[m+1,m+1,m-1]$ if $n=3m+1$ and $[m+2,m,m]$ if $n=3m+2$, and the only maximal overgroups of $s$ are the three obvious intransitive ones. For each $1 \neq x \in G$, it is easy to find $g \in G$ such that $x$ misses all three maximal overgroups of $s^g$ and hence deduce that $\< x, s^g \> = G$.
\end{example}

In 2000, Guralnick and Kantor \cite{ref:GuralnickKantor00} gave a positive answer to the longstanding question of Steinberg by proving the following.

\begin{theorem} \label{thm:guralnick_kantor}
Let $G$ be a finite simple group. Then every nontrivial element of $G$ is contained in a generating pair.
\end{theorem}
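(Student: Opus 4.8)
The plan is to reduce the theorem to a statement about maximal subgroups and then dispatch each family of finite simple groups using the Classification. The key observation is the one already used in Example~\ref{ex:alternating}: for a nontrivial $x \in G$, showing that $\langle x, y \rangle = G$ for some $y$ amounts to finding $y$ that lies in no maximal subgroup containing $x$, equivalently, finding $y$ outside $\bigcup_{M} M$ where $M$ ranges over the maximal subgroups containing $x$. The cleanest route is to prove the stronger statement that $G$ has \emph{spread} at least $1$, or better still to exhibit, for each $G$, a single conjugacy class $C = s^G$ such that for every nontrivial $x$ there exists $s' \in C$ with $\langle x, s' \rangle = G$; this is the ``uniform'' strategy and it is what makes the Lie-type analysis tractable. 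So the first step is to set up this reduction and introduce the relevant probabilistic bookkeeping: for fixed $x$ and random $s' \in C$, the probability that $\langle x, s' \rangle \neq G$ is at most $\sum_{M} \frac{|x^G \cap M|}{|x^G|} \cdot \frac{|C \cap M|}{|C|}$ summed over (representatives of) maximal subgroups $M$, and it suffices to show this is less than $1$.

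Next I would split into cases according to the type of $G$. For alternating groups, Example~\ref{ex:alternating} already does the job. For sporadic groups, this is a finite check, handled by explicit computation with character tables or by direct inspection of the maximal subgroup lattice. The substantive work is the groups of Lie type, which I would handle in two regimes. For groups of large rank or over large fields, I would pick $s$ to be (a power of) a regular semisimple element lying in a maximal torus of a suitable type — chosen so that $s$ lies in very few maximal subgroups, ideally only one or two ``obvious'' ones analogous to the intransitive subgroups in the alternating case — and then bound the fixed-point-ratio quantities $\frac{|x^G \cap M|}{|x^G|}$ using the Liebeck--Shalev-style estimates on fixed point ratios for primitive actions, together with bounds on the number of conjugacy classes of maximal subgroups. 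For the finitely many groups of small rank over small fields that escape these asymptotic bounds, one falls back on explicit computation.

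The main obstacle is the Lie-type case, and within it the genuinely hard part is choosing the element $s$ and controlling which maximal subgroups contain it — one needs a uniform-across-families choice whose centralizer structure is well understood and whose overgroups can be enumerated using Aschbacher's subgroup structure theorem, and then one must verify that the resulting sum of fixed-point ratios is strictly below $1$ for \emph{every} nontrivial $x$, including unipotent elements and small-centralizer semisimple elements where the ratios $\frac{|x^G \cap M|}{|x^G|}$ are largest. This is where the real technical weight lies, and it is where \cite{ref:GuralnickKantor00} develops the detailed fixed-point-ratio machinery; the small-rank, small-field exceptional cases then require a separate, often computational, treatment. I would emphasise that the proof is necessarily Classification-dependent and that the probabilistic reformulation, though it only needs probability $< 1$ here, is exactly the framework that later yields the stronger spread results surveyed in Section~\ref{ss:finite_spread}.
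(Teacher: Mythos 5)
Your proposal takes essentially the same route as the paper: the survey does not prove Theorem~\ref{thm:guralnick_kantor} itself but cites \cite{ref:GuralnickKantor00,ref:Stein98} and then describes exactly your strategy in Section~\ref{ss:finite_spread} --- upgrade to the uniform statement $u(G)\geq 1$ witnessed by a single class $s^G$, bound the failure probability $Q(x,s)$ by fixed point ratios over the maximal overgroups of $s$ (Lemma~\ref{lem:spread}), control $\M(G,s)$ via Aschbacher-type subgroup structure results and handle alternating, sporadic and small Lie-type cases separately, all CFSG-dependently. One bookkeeping caveat: your displayed bound, summed over conjugacy-class \emph{representatives} $M$, omits the multiplicity factor (the number of conjugates of $M$ containing $s$, equivalently a factor $|G{:}M|$ in the symmetrised form); it is correct if summed over all maximal subgroups, and the paper's version, Lemma~\ref{lem:spread}(i), instead sums $\frac{|x^G\cap H|}{|x^G|}$ over all $H\in\M(G,s)$.
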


We say that a group $G$ is \emph{$\frac{3}{2}$-generated} if every nontrivial element of $G$ is contained in a generating pair. The author does not know the origin of this term, but it indicates that the class of $\frac{3}{2}$-generated groups includes the class of $1$-generated groups and is included in the class of $2$-generated groups. This is somewhat analogous to the class of $\frac{3}{2}$-transitive permutation groups introduced by Wielandt \cite[Section~10]{ref:Wielandt64}, which is included in the class of $1$-transitive groups and includes the class of $2$-transitive groups.

Let us finish this section by briefly turning from simple groups to simple Lie algebras. Here we have a theorem of Ionescu \cite{ref:Ionescu76}, analogous to Theorem~\ref{thm:guralnick_kantor}.

\begin{theorem}\label{thm:ionescu}
Let $\mathfrak{g}$ be a finite dimensional simple Lie algebra over $\mathbb{C}$. Then for all $x \in \mathfrak{g} \setminus 0$ there exists $y \in \mathfrak{g}$ such that $x$ and $y$ generate $\mathfrak{g}$ as a Lie algebra.
\end{theorem}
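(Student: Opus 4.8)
The plan is to reduce the problem to the case where $x$ is a nonzero element of a Cartan subalgebra, and then to produce a companion element $y$ using a root-space element together with a regular semisimple element. First I would set up the structure theory: fix a Cartan subalgebra $\mathfrak{h} \leq \mathfrak{g}$ with root system $\Phi$, root spaces $\mathfrak{g}_\alpha$ and a Chevalley basis. The key reduction is that the property ``$x$ lies in a generating pair'' is invariant under the adjoint group $\mathrm{Int}(\mathfrak{g})$ acting on $\mathfrak{g}$, so it suffices to treat one representative from each orbit. For a semisimple $x$ we may take $x \in \mathfrak{h}$; for a general $x$ we use the Jordan decomposition $x = x_s + x_n$ and argue that moving $x$ around by the adjoint group lets us assume both $x_s$ and $x_n$ lie in convenient position relative to $\mathfrak{h}$ and a fixed $\mathfrak{sl}_2$-triple.

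Next I would build the candidate $y$. The natural choice is something like $y = h + e$, where $e = \sum_{\alpha \in \Delta} e_\alpha$ is the sum of the simple-root vectors (a ``principal nilpotent''), and $h$ is a suitably generic element of $\mathfrak{h}$. The subalgebra generated by $x$ and $y$ is an $\mathrm{ad}$-invariant-under-$x$ subalgebra containing $y$; to show it is all of $\mathfrak{g}$, I would show it is not contained in any maximal subalgebra of $\mathfrak{g}$. This mirrors the permutation-group strategy in Example~\ref{ex:alternating}: one classifies (or at least constrains) the maximal subalgebras $\mathfrak{m}$ that could contain $y$, and then, for each nonzero $x$, chooses the adjoint-group conjugate of $y$ so that $x \notin \mathfrak{m}$ for every such $\mathfrak{m}$. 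Maximal subalgebras of a simple complex Lie algebra are either parabolic, or reductive of maximal rank, or simple of a bounded type (by the Borel--de Siebenthal and Dynkin classifications); a principal nilpotent $e$ together with a generic $h$ should lie in essentially only the Borel it obviously sits in, after which ruling out $x$ from that Borel (and its conjugates) is a dimension/genericity count.

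The main obstacle I expect is the case analysis for nilpotent and mixed $x$ near the principal nilpotent orbit: if $x$ itself is (conjugate into) the Borel containing $y$, the simple-minded choice fails and one must perturb $y$ — e.g.\ replace $e$ by $e + f$ for a low root vector $f$, or conjugate $x$ by a generic one-parameter subgroup — while preserving control over the maximal overalgebras of $y$. Handling this uniformly across all Cartan types, and in particular checking that the ``generic'' choices of $h$ avoid the finitely many bad hyperplanes coming from the finitely many maximal subalgebra types, is where the real work lies; everything else is the structure theory of semisimple Lie algebras over $\mathbb{C}$ together with the Dynkin--Borel--de Siebenthal description of maximal subalgebras.
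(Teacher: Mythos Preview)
The paper does not actually prove Theorem~\ref{thm:ionescu}; it is stated as a known result of Ionescu and attributed to \cite{ref:Ionescu76}, with no argument given in the text. So there is no ``paper's own proof'' to compare your proposal against.

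That said, a few remarks on your sketch. The overall strategy --- choose $y$ whose maximal overalgebras you can control, then show $x$ avoids one conjugate of each --- is the natural Lie-algebra analogue of the method the paper uses for finite simple groups (cf.\ Example~\ref{ex:alternating} and Lemma~\ref{lem:spread}), and it is a sensible line of attack. However, two steps are underspecified. First, the assertion that $h+e$ (generic $h$, principal nilpotent $e$) lies in ``essentially only the Borel it obviously sits in'' is the crux, and it is not obvious: a regular element with nontrivial nilpotent part can sit in parabolics other than a single Borel, and you have not said how the genericity of $h$ rules out the reductive maximal-rank and Dynkin-type maximal subalgebras uniformly across types. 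Second, your reduction for mixed $x$ via Jordan decomposition and ``convenient position'' is vague; unlike the semisimple case, there is no single normal form for $x_s+x_n$ relative to $\mathfrak h$ that makes the subsequent argument transparent, and you have already flagged the case where $x$ is conjugate into the same Borel as $y$ as the hard one. These are the places where a referee would ask for real work, and your proposal correctly identifies them but does not resolve them.
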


In fact, Bois \cite{ref:Bois09} proved that every classical finite dimensional simple Lie algebra in characteristic other than 2 or 3 has this $\frac{3}{2}$-generation property, but Goldstein and Guralnick \cite{ref:GoldsteinGuralnick} have proved that $\mathfrak{sl}_n$ in characteristic 2 does not.

\subsection{Spread} \label{ss:finite_spread}

Let us now introduce the concept that gives this article its name.

\begin{definition} \label{def:spread}
Let $G$ be a group. 
\begin{enumerate}
\item The \emph{spread} of $G$, written $s(G)$, is the supremum over integers $k$ such that for any $k$ nontrivial elements $x_1, \dots, x_k \in G$ there exists $y \in G$ such that $\< x_1, y \> = \cdots = \< x_k, y \> = G$. 
\item The \emph{uniform spread} of $G$, written $u(G)$, is the supremum over integers $k$ for which there exists $s \in G$ such that for any $k$ nontrivial elements $x_1, \dots, x_k \in G$ there exists $y \in s^G$ such that $\< x_1, y \> = \cdots = \< x_k, y \> = G$.
\end{enumerate}
\end{definition}

The term spread was introduced by Brenner and Wiegold in 1975 \cite{ref:BrennerWiegold75}, but the term uniform spread was not formally introduced until 2008 \cite{ref:BreuerGuralnickKantor08}.

Note that $s(G) > 0$ if and only if every nontrivial element of $G$ is contained in a generating pair. Therefore, spread gives a way of quantifying how strongly a group is $\frac{3}{2}$-generated.  Uniform spread captures the idea that the complementary element $y$, while depending on the elements $x_1, \dots, x_k$, can be chosen somewhat uniformly for all choices of $x_1, \dots, x_k$: it can always be chosen from the same prescribed conjugacy class. In Section~\ref{ss:finite_udn}, we will see a way of measuring how much more uniformity in the choice of $y$ we can insist on. Observe that Example~\ref{ex:alternating} actually shows that $u(A_n) \geq 1$ for all $n \geq 5$.

By Theorem~\ref{thm:guralnick_kantor}, every finite simple group $G$ satisfies $s(G) > 0$. What more can be said about the (uniform) spread of finite simple groups? The main result is the following proved by Breuer, Guralnick and Kantor \cite{ref:BreuerGuralnickKantor08}.

\begin{theorem}\label{thm:breuer_guralnick_kantor}
\hspace{-2mm} Let $G$ be a nonabelian finite simple group. Then $s(G) \geq u(G) \geq 2$. Moreover, $s(G) = 2$ if and only if $u(G)=2$ if and only if
\vspace{-1.5pt}
\[
G \in \{ A_5, A_6, \Omega^+_8(2) \} \cup \{ {\rm Sp}_{2m}(2) \mid m \geq 3 \}.
\]
\end{theorem}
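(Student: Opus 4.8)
The inequality $s(G) \geq u(G)$ holds for any group straight from the definitions, since uniform spread differs from spread only in insisting that the complement $y$ be drawn from a single prescribed conjugacy class. The substance is thus the bound $u(G) \geq 2$ and the identification of the groups with $u(G) = 2$. Granting $u(G) \geq 2$, so that $2 \leq u(G) \leq s(G)$, the ``moreover'' reduces to two statements: if $G$ is not in the displayed list then $u(G) \geq 3$ (hence $s(G) \geq u(G) \geq 3$), and if $G$ is in the list then $s(G) = 2$ (hence $u(G) \leq s(G) = 2$, so $u(G) = 2$). These give all three equivalences at once. So I would prove: (A) $u(G) \geq 2$ for all nonabelian finite simple $G$, and $u(G) \geq 3$ unless $G \in \{A_5, A_6, \Omega^+_8(2)\} \cup \{{\rm Sp}_{2m}(2) : m \geq 3\}$; (B) $s(G) = 2$ for each $G$ in that list.

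For (A) the tool is the usual probabilistic lemma. Fix a conjugacy class $C = s^G$. Since $\<x,y\> \neq G$ forces $x$ and $y$ into a common maximal subgroup, for each nontrivial $x$,
\[
\frac{|\{y \in C : \<x,y\> \neq G\}|}{|C|} \ \leq \sum_{\substack{M \text{ maximal}\\ x \in M}} \frac{|C \cap M|}{|C|} \ = \sum_{\substack{M \text{ maximal}\\ x \in M}} \mathrm{fpr}(s,G/M) \ =: \ Q(x,C),
\]
using the standard identity $|s^G \cap M|/|s^G| = \mathrm{fpr}(s,G/M)$ for the fixed point ratio of $s$ on $G/M$. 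Hence $u(G) \geq k$ (witnessed by $C$) provided $Q(x_1,C)+\dots+Q(x_k,C) < 1$ for all nontrivial $x_i$, and since a complement to a suitable prime-order power $x^m$ is a complement to $x$, it is enough to check elements of prime order. So I need a class $C$ with $Q(x,C) < \tfrac12$ (for $u(G) \geq 2$), respectively $Q(x,C) < \tfrac13$ (for $u(G) \geq 3$), for every prime-order $x$. Grouping the maximal subgroups into $G$-classes $\mathcal M$, the quantity $Q(x,C)$ is a sum over $\mathcal M$ of (number of $M \in \mathcal M$ containing $x$) times $\mathrm{fpr}(s,G/M)$; so everything comes down to choosing $s$ well and bounding fixed point ratios.

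I would run this through the Classification. For $G = A_n$, choose $s$ with an almost-unique maximal overgroup as in Example~\ref{ex:alternating} --- an element whose cycle shape forces its only maximal overgroups to be two or three intransitive subgroups $(\Sm{a} \times \Sm{b}) \cap A_n$ (Marggraf's theorem rules out primitive overgroups). Then $\mathrm{fpr}(s,G/M) = 0$ for $M$ of every other type, the surviving fixed point ratios of $s$ are tiny, and an elementary bound on the number of such intransitive subgroups through a prime-order $x$ gives $Q(x,C) < \tfrac13$ once $n$ is not small. For a classical group over $\F_q$, take $s$ to be a regular semisimple element of a cyclic maximal torus of suitable order (involving a primitive prime divisor of $q^e \mp 1$ for $e$ near the rank), so that $s$ lies in very few maximal subgroups and has minuscule fixed point ratios; Aschbacher's theorem on the maximal subgroups bounds the number of overgroups of a prime-order $x$, and the fixed point ratio bounds needed are the deep estimates of Liebeck--Saxl, Guralnick--Magaard and, above all, Burness. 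Exceptional groups go the same way with the fixed point ratio results of Lawther--Liebeck--Seitz (and explicit computation in small rank), and the sporadic groups plus a handful of genuinely small cases by computation in \textsf{GAP}. In each family one gets $Q(x,C) < \tfrac12$, hence $u(G) \geq 2$, and even $Q(x,C) < \tfrac13$, hence $u(G) \geq 3$, except precisely for $A_5$, $A_6$, $\Omega^+_8(2)$ and ${\rm Sp}_{2m}(2)$ with $m \geq 3$.

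For (B), since $s(G) \geq u(G) \geq 2$ it suffices to exhibit three nontrivial $x_1,x_2,x_3 \in G$ with no common complement, equivalently with $\bigcup_i \bigcup_{M \ni x_i} M = G \setminus \{1\}$ (unions over maximal $M$). For $A_5$, $A_6$ and $\Omega^+_8(2)$ this is a finite check (for $\Omega^+_8(2)$, triality is what manufactures enough large subgroups). For ${\rm Sp}_{2m}(2)$ the obstruction is structural: $G$ has two classes of orthogonal maximal subgroups, ${\rm O}^+_{2m}(2)$ and ${\rm O}^-_{2m}(2)$, of comparatively small index $2^{m-1}(2^m+1)$ and $2^{m-1}(2^m-1)$, and a transvection has an unusually large fixed point ratio in each of the two associated primitive actions --- the transvection with centre $e$ lies in ${\rm O}(Q)$ exactly when $e$ is nonsingular for the quadratic form $Q$, which is so for half of all such forms, so the ratio on each set of forms is close to $\tfrac12$. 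Taking $x_1,x_2,x_3$ to be transvections with suitably placed centres, and accounting for the remaining maximal subgroups through which the elements lying in no orthogonal subgroup must factor, one obtains a triple with no common complement, uniformly in $m$; this gives $s({\rm Sp}_{2m}(2)) = 2$ and completes the proof. The decisive difficulty throughout is the classical-groups fixed point ratio input: a mere power-saving bound $\mathrm{fpr}(x,G/M) \leq |x^G|^{-\e}$ only yields $u(G) \to \infty$ asymptotically, and it is the precision of Burness's bounds --- together with a careful hand-treatment of the finitely many small classical groups where they are not yet sharp enough --- that pins the threshold at exactly $u(G) = 2$.
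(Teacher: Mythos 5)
Your high-level architecture is the right one, and it is essentially how Breuer, Guralnick and Kantor argue in the cited paper (this survey does not reprove Theorem~\ref{thm:breuer_guralnick_kantor}; it only records it, together with the method in Lemma~\ref{lem:spread} and the cautionary Example~\ref{ex:spread_sp}). The reduction of the three equivalences to your statements (A) and (B) is correct, your probabilistic criterion is equivalent to Lemma~\ref{lem:spread}, and the idea for (B) in the symplectic case (three transvections versus the two families of orthogonal overgroups) is the standard and correct route to $s(\Sp_{2m}(2)) \leq 2$. However, there is a genuine gap in (A): you claim that in every family one can choose a class $C$ with $Q(x,C) < \frac{1}{2}$ for all prime order $x$, and you rely on this to get $u(G) \geq 2$ uniformly, including for the groups on the exceptional list. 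For the infinite family $G = \Sp_{2m}(2)$ this is provably impossible: as Example~\ref{ex:spread_sp} records (quoting Proposition~5.4 of the Breuer--Guralnick--Kantor paper), if $x$ is a transvection then $Q(x,s) > \frac{1}{2}$ for \emph{every} $s \in G$, since every element of $G$ lies in a subgroup of type ${\rm O}^{+}_{2m}(2)$ or ${\rm O}^{-}_{2m}(2)$ and the proportion of transvections in these subgroups is too large. Consequently the criterion $Q(x_1,C) + Q(x_2,C) < 1$ can never be satisfied when $x_1, x_2$ are transvections, and $u(\Sp_{2m}(2)) \geq 2$ (which your plan needs, and which the theorem asserts) cannot be obtained from the probabilistic lemma at all; it requires a separate, much more delicate direct argument, and this is precisely one of the hard points of the actual proof. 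For the finitely many groups $A_5$, $A_6$, $\Omega^+_8(2)$ your appeal to computation is fine, but an infinite family cannot be waved through this way.

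Two smaller remarks. In (B), your aside about ``the elements lying in no orthogonal subgroup'' is empty: by Dye's theorem every element of $\Sp_{2m}(2)$ lies in an ${\rm O}^{\pm}_{2m}(2)$ subgroup, and that is exactly what makes the upper bound work --- take transvections with centres $e_1$, $e_2$ and $e_1 + e_2$, where the symplectic form pairs $e_1$ and $e_2$ nontrivially; then every compatible quadratic form is nonsingular at one of the three centres, so every $y \in G$ shares an orthogonal maximal overgroup with one of the three transvections, giving $s(G) \leq 2$ cleanly. Finally, be aware that even outside the listed groups the threshold $Q(x,C) < \frac{1}{3}$ is not automatic in the bounded-spread families of Theorem~\ref{thm:guralnick_shalev} (symplectic groups over small even fields, odd-dimensional orthogonal groups over small odd fields), so the phrase ``in each family one gets'' is doing a great deal of unacknowledged work; the published proof supplements the generic fixed point ratio estimates with bespoke arguments in exactly these cases.
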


The asymptotic behaviour of (uniform) spread is given by the following theorem of Guralnick and Shalev \cite[Theorem~1.1]{ref:GuralnickShalev03}. The version of this theorem stated in \cite{ref:GuralnickShalev03} is given just in terms of spread, but the result given here follows immediately from their proof (see \cite[Lemma~2.1--Corollary~2.3]{ref:GuralnickShalev03}). 

\begin{theorem}\label{thm:guralnick_shalev}
Let $(G_i)$ be a sequence of nonabelian finite simple groups such that $|G_i| \to \infty$. Then $s(G_i) \to \infty$ if and only if $u(G_i) \to \infty$ if and only if $(G_i)$ has no infinite subsequence consisting of either
\begin{enumerate}
\item alternating groups of degree all divisible by a fixed prime
\item symplectic groups over a field of fixed even size or odd-dimensional orthogonal groups over a field of fixed odd size.
\end{enumerate}
\end{theorem}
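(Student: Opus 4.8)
The plan is to prove both directions by identifying precisely which features of a family of simple groups obstruct the uniform spread from growing, and then showing that away from those families the spread is forced to infinity. Since $s(G) \geq u(G)$ always holds, it suffices to show two things: (a) if $(G_i)$ contains an infinite subsequence of the exceptional type in (i) or (ii), then $s(G_i)$ is bounded along that subsequence; and (b) if $(G_i)$ avoids both exceptional types, then $u(G_i) \to \infty$. Together these give $s(G_i) \to \infty \Leftrightarrow u(G_i) \to \infty \Leftrightarrow$ (i) and (ii) both fail.

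For direction (a), the idea is to exhibit, in each exceptional family, a bounded number of nontrivial elements $x_1, \dots, x_k$ that cannot be simultaneously completed to a generating pair by any single $y$. The cleanest mechanism is a ``covering'' obstruction: if $G$ has a conjugacy class $C$ together with a bounded collection of maximal subgroups $M_1, \dots, M_r$ whose union covers $C$, then taking $x_j$ to be the $C$-elements lying in $M_j$ (one per subgroup, chosen so that whatever $y$ is, $y$ lies in some $M_j$, forcing $\langle x_j, y\rangle \leq M_j < G$) bounds the spread by $r-1$. For the alternating groups $A_n$ with $n$ divisible by a fixed prime $p$, one uses that a $p$-element can be built to lie inside an imprimitive maximal subgroup $(\Sm{p} \wr \Sm{n/p}) \cap G$, and a short bounded family of such elements across different block systems does the job; for the symplectic groups $\Sp_{2m}(2)$ (equivalently the $\mathrm{O}^{\pm}_{2m}(2)$ subgroup structure) and the odd-dimensional orthogonal groups over a fixed odd field, one exploits the two classes of subfield-type or orthogonal-type maximal subgroups whose number is bounded independently of the rank. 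This matches the exceptions already appearing in Theorem~\ref{thm:breuer_guralnick_kantor}, and indeed for $\Sp_{2m}(2)$ one gets the constant bound $2$.

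For direction (b), the workhorse is the standard probabilistic method for lower bounds on uniform spread. Fix a well-chosen class $s^G$ (an element of large order lying in few maximal subgroups). For a nontrivial $x \in G$, the probability that a uniformly random conjugate $y \in s^G$ fails to generate with $x$ is at most $\sum_{M} |x^G \cap M|\,|s^G \cap M| / (|x^G|\,|s^G|)$, summed over maximal subgroups $M$ containing $s$ up to conjugacy. To get $u(G) \geq k$ one needs this fixed-point-ratio-type sum, times $k$, to stay below $1$; by fixed point ratio estimates (Liebeck--Shalev and subsequent work) this sum tends to $0$ for the non-exceptional families precisely when $n$ is not confined to multiples of a fixed prime (alternating case) or the field size / dimension is not pinned down (classical case). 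The main obstacle is this step: one must invoke and correctly apply the deep fixed point ratio bounds for primitive actions of simple groups, and verify uniformly that the relevant sum is $o(1)$ exactly outside the families in (i) and (ii) — handling the alternating groups (where the subgroup structure is combinatorial and the bad case is genuinely a fixed prime dividing the degree) and the groups of Lie type (where one partitions maximal subgroups into geometric and almost-simple types and the bad case is a bounded field) by somewhat different arguments. Once that estimate is in hand, the equivalence with $s(G_i) \to \infty$ follows formally from the sandwich $s(G) \geq u(G)$ together with the upper bounds from direction (a).
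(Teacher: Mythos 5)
You should first note that the paper does not prove Theorem~\ref{thm:guralnick_shalev}: it is quoted from Guralnick and Shalev \cite{ref:GuralnickShalev03}, and your overall architecture (bounded upper bounds on $s$ along the exceptional families, probabilistic lower bounds on $u$ otherwise, glued together by $s(G) \geq u(G)$) is indeed the architecture of their proof. The problems are in the execution. For your direction (a), the covering mechanism is inconsistent as stated: you require both that the maximal subgroups $M_1, \dots, M_r$ be bounded in number and that every $y \in G$ lie in some $M_j$, but no large simple group is a union of boundedly many proper subgroups. The actual arguments use an \emph{unbounded} family of proper subgroups covering $G$, together with boundedly many elements $x_1, \dots, x_k$ such that every member of the family contains some $x_j$ (each $x_j$ lying in very many members). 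Concretely, for $\Sp_{2m}(2)$ every element stabilises a quadratic form polarising to the symplectic form (Dye, as in Example~\ref{ex:spread_sp}), and one takes three transvections with directions $v_1$, $v_2$, $v_1+v_2$, where $v_1,v_2$ are non-perpendicular: every such form is nonzero at one of these vectors, so every subgroup of type ${\rm O}^{\pm}_{2m}(2)$ contains one of the three transvections, giving $s \leq 2$. For $A_n$ with $p \mid n$ one takes the $\binom{2p+1}{3}$ $3$-cycles on a fixed $(2p+1)$-set: if some $y$ generated with all of them, every orbit of $y$ would have to contain at least $2p-1$ points of the set (else a $3$-cycle on missed points exhibits intransitivity), forcing $y$ to be an $n$-cycle; then a block of its invariant partition into $p$ blocks of size $n/p$ contains three of the $2p+1$ points, and the corresponding $3$-cycle exhibits imprimitivity. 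This is the combinatorial proof behind the bound $s(A_n) \leq \binom{2p+1}{3}$ quoted after the theorem. Your alternating-group sketch (``$p$-elements inside imprimitive subgroups across different block systems'') is not this argument and, as described, yields no bound: nothing prevents a given $y$ from generating with each of your chosen $p$-elements.

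For your direction (b), you have deferred precisely the substantive content of \cite{ref:GuralnickShalev03}: producing, in each non-exceptional family, a class $s^G$ with controlled maximal overgroups and fixed point ratio estimates making the sum in Lemma~\ref{lem:spread} less than $\frac{1}{k}$ for every prime-order $x$, with $k \to \infty$; this includes a separate combinatorial treatment of $A_n$ when the least prime divisor of $n$ tends to infinity, where the Lie-type bound \eqref{eq:fpr} says nothing. Asserting that ``the sum is $o(1)$ exactly outside the families in (i) and (ii)'' restates the theorem rather than proving it. Finally, your displayed probabilistic bound is wrong as written: if you sum over maximal overgroups of $s$ up to conjugacy, each class must be weighted by the number of conjugates of $M$ containing $s$, which is $|s^G \cap M|\,|G:N_G(M)|/|s^G|$ rather than $|s^G \cap M|/|s^G|$; the unweighted form, as in Lemma~\ref{lem:spread}(i), sums over all maximal overgroups of $s$, not over class representatives.
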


Given that $s(G_i) \to \infty$ if and only if $u(G_i) \to \infty$, we ask the following. (Note that $s(G) - u(G)$ can be arbitrarily large, see Theorem~\ref{thm:spread_psl2}(iv) for example.)

\begin{question} \label{que:spread_uniform}
Does there exist a constant $c$ such that for all nonabelian finite simple groups $G$ we have $s(G) \leq c \cdot u(G)$?
\end{question}

There are explicit upper bounds that justify the exceptions in parts~(i) and~(ii) of Theorem~\ref{thm:guralnick_shalev}. Indeed, $s(\Sp_{2m}(q)) \leq q$ for even $q$ and $s(\Omega_{2m+1}(q)) \leq \frac{1}{2}(q^2+q)$ for odd $q$ (see \cite[Proposition~2.5]{ref:GuralnickShalev03} for a geometric proof). For alternating groups of composite degree $n > 4$, if $p$ is the least prime divisor of $n$, then $s(A_n) \leq \binom{2p+1}{3}$ (see \cite[Proposition~2.4]{ref:GuralnickShalev03} for a combinatorial proof). For even-degree alternating groups, the situation is clear: $s(A_n)=4$, but much less is known in odd degrees (see \cite[Section~3.1]{ref:GuralnickShalev03} for partial results). 

\begin{question} \label{que:spread_alt}
What is the (uniform) spread of $A_n$ when $n$ is odd?
\end{question}

The spread of even-degree alternating groups was determined by Brenner and Wiegold in the paper where they first introduced the notion of spread. They also studied the spread of two-dimensional linear groups, but their claimed value for $s(\PSL_2(q))$ was only proved to be a lower bound. Further work by Burness and Harper demonstrates that this is not an upper bound when $q \equiv 3 \mod{4}$, where they prove the following (see \cite[Theorem~5 \& Remark~5]{ref:BurnessHarper20}).

\begin{theorem} \label{thm:spread_psl2}
Let $G = \PSL_2(q)$ with $q \geq 11$.
\begin{enumerate}
\item If $q$ is even, then $s(G) = u(G) = q-2$.
\item If $q \equiv 1 \mod{4}$, then $s(G) = u(G) = q-1$.
\item If $q \equiv 3 \mod{4}$, then $s(G) \geq q-3$ and $u(G) \geq q-4$.
\item If $q \equiv 3 \mod{4}$ is prime, then $s(G) \geq \frac{1}{2}(3q-7)$ and $s(G)-u(G) = \frac{1}{2}(q+1)$.
\end{enumerate}
\end{theorem}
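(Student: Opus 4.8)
Everything runs through the probabilistic method of Guralnick and Kantor, sharpened by Breuer--Guralnick--Kantor. For $s \in G$ and a nontrivial $x \in G$, write $P(x,s)$ for the probability that a uniformly random conjugate $y$ of $s$ satisfies $\langle x,y\rangle \neq G$. Then $u(G) \geq k$ as soon as there is one element $s$ with $\sum_{i=1}^{k} P(x_i,s) < 1$ for every $k$-tuple of nontrivial $x_i$, while $s(G) \geq k$ if one may choose $s$ after seeing the tuple. The key estimate is
\[
P(x,s) \;\leq\; \sum_{M \in \mathcal{M}(s)} \frac{|x^G \cap M|}{|x^G|},
\]
where $\mathcal{M}(s)$ is the set of maximal subgroups of $G$ containing $s$; so one wants $s$ to lie in very few maximal subgroups, and then needs sharp bounds on the fixed point ratios $|x^G \cap M|/|x^G|$. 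For $G = \PSL_2(q)$ we use Dickson's list of maximal subgroups --- a Borel subgroup of order $q(q-1)/d$, the dihedral torus normalisers $D_{q-1}$ and $D_{q+1}$ of orders $2(q-1)/d$ and $2(q+1)/d$, subfield subgroups $\PSL_2(q_0)$, and (when $p = \mathrm{char}\,\F_q$ is small) copies of $A_4, S_4, A_5$ --- where $d = \gcd(2,q-1)$. The finitely many small $q$ are treated by direct computation, so assume $q$ is large.

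For the lower bounds in (i)--(iii), take $s$ to be a generator of the cyclic nonsplit torus, of order $(q+1)/d$. For $q$ large this order divides neither $60$ nor $|\PSL_2(q_0)|$ for a proper subfield, so $\mathcal{M}(s) = \{N\}$ with $N := D_{q+1}$ the unique maximal overgroup, and $P(x,s) \leq |x^G \cap N|/|x^G|$. This ratio is largest when $x$ is an involution: a nontrivial unipotent element lies in no conjugate of the $p'$-group $N$, a split semisimple element is not $G$-conjugate into $N$, and a nontrivial element of the cyclic part of $N$ meets $N$ in only two points but has a $G$-class of size $q(q-1)$. For an involution $t$ the bound hinges on the number of involutions in the dihedral group $N$, which depends on the parity of $(q+1)/d$, i.e.\ on $q \bmod 4$ (there is no issue for even $q$). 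If $q$ is even or $q \equiv 1 \bmod 4$, then $t$ lies in $N$ only among the reflections, giving $P(t,s) \leq 1/(q-1)$ or $1/q$ respectively, and hence $\sum_i P(x_i,s) < 1$ for $k \leq q-2$ or $k \leq q-1$; if $q \equiv 3 \bmod 4$, then $C_G(t) = N$, there are an extra $(q+1)/2$ reflections and a central involution, $P(t,s) \leq (q+3)/q(q-1)$, and the bound survives only for $k \leq q-4$. This gives $u(G) \geq q-2,\ q-1,\ q-4$ in the three cases, hence the stated lower bounds for $s(G)$ as well.

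The exact values in (i) and (ii) require matching upper bounds, i.e.\ tuples of $q-1$ (resp.\ $q$) nontrivial elements with no common complement $y$. Since every $y \in G$ lies in a maximal subgroup, it suffices to exhibit such a tuple meeting every maximal subgroup of $G$, and reversing the fixed point ratio estimates (now as lower bounds on the number of conjugates of a given maximal subgroup through an element) shows that $q-1$ (resp.\ $q$) elements --- built from involutions together with a few semisimple elements to catch the Borel subgroups --- both suffice and are necessary; the construction of an extremal family of exactly this size is the combinatorial heart of the matter. For the extra unit in (iii), namely $s(G) \geq q-3$ when $q \equiv 3 \bmod 4$, one treats the single obstructing configuration: if all of $x_1,\dots,x_{q-3}$ are involutions then, since $q \equiv 3 \bmod 4$, none fixes a point of the projective line, so none lies in any Borel subgroup, and taking $y$ to be a unipotent element yields $\langle x_i,y\rangle = G$ for every $i$.

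Part (iv) treats $q \equiv 3 \bmod 4$ prime, where $s(G)$ strictly exceeds $u(G)$. Primality matters because $G$ then has no proper subfield subgroups and $p = q > 5$, so a unipotent element $u$ lies in a unique maximal subgroup, its Borel $B$; hence $P(x,u) \leq |x^G \cap B|/|x^G|$, which equals $1/(q+1)$ times the number of fixed points of $x$ on the projective line --- namely $0$ for involutions and for nonsplit semisimple elements, $1$ for unipotents, and $2$ for split semisimple elements. Given a tuple of $k = \tfrac12(3q-7)$ nontrivial elements, one shows that \emph{either} the nonsplit-torus element $s$ above \emph{or} a unipotent element is a common complement: writing $a,b,c,d$ for the numbers of entries that are involutions, split semisimple of order $>2$, unipotent, and nonsplit semisimple of order $>2$, the former fails only if $(q+3)a+2d \geq q(q-1)$ and the latter only if $2b+c \geq q+1$, and since $a+b+c+d = \tfrac12(3q-7)$ a short computation shows these two inequalities cannot both hold; this gives $s(G) \geq \tfrac12(3q-7)$. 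For the exact gap, the matching upper bound assembles a blocking tuple from about $q-4$ involutions meeting all conjugates of $D_{q+1}$ and $\tfrac12(q+1)$ split semisimple elements of order $>2$ meeting all $q+1$ Borels; since an involution lies in no Borel while such a semisimple element lies in no conjugate of $D_{q+1}$ (but in exactly two Borels), the two covering tasks require disjoint elements, so the smallest blocking tuple has size $\tfrac12(3q-5)$, which together with (iii) pins $s(G) - u(G)$ at $\tfrac12(q+1)$. The main obstacle throughout is sharpness: each inequality is won or lost by an additive constant, so the counts of involutions inside the dihedral subgroups, of conjugates of a maximal subgroup through a prescribed element, and of Borel subgroups met by a semisimple element must all be computed exactly, and the extremal covering families for the upper bounds must be built explicitly; the remaining small $q$ are dealt with by direct computation.
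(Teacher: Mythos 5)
The survey does not prove this theorem (it is quoted from Burness--Harper, to which the paper refers for Theorem~5 and Remark~5 of that work), so your proposal has to stand on its own; its lower-bound half does, but its upper-bound half does not. The probabilistic set-up, the choice of a nonsplit-torus generator $s$ with $\M(G,s)=\{D_{q+1}\text{-type}\}$, and the exact involution counts giving ratios $1/(q-1)$, $1/q$ and $(q+3)/(q(q-1))$ are correct and yield $u(G)\geq q-2,\,q-1,\,q-4$ as stated, and your ``either the nonsplit class or the unipotent class wins'' computation for $s(G)\geq\frac{1}{2}(3q-7)$ in (iv) is sound. The gaps are in the exact values. For (i) and (ii) you propose to exhibit $q-1$ (resp.\ $q$) elements \emph{meeting every maximal subgroup}; this sufficient criterion provably cannot be met by that few elements: for even $q$ there are $q(q+1)/2$ conjugates of the dihedral subgroup of order $2(q-1)$ and every nontrivial element lies in at most $q/2$ of them, so at least $q+1$ elements are needed, and for $q\equiv 1 \mod{4}$ one would also have to meet every conjugate of $A_4$ (e.g.\ $q=13$), which the same counting forbids. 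The correct criterion is weaker: $x_1,\dots,x_k$ block if and only if $G$ is the union of all maximal overgroups of the $x_i$. For even $q$ the actual extremal family is the $q-1$ involutions of a Sylow $2$-subgroup $U$: any semisimple $y$ is either inverted by a reflection of $N_G(\<y\>)$ lying in $U$ (the $q\pm 1$ reflections lie in pairwise distinct Sylow $2$-subgroups), or else $y$ fixes the point fixed by $U$ and lies in the Borel above $U$; no semisimple elements are needed, contrary to your description, and an analogous (all-involution, by a forcing count) construction is needed for $q\equiv 1\mod{4}$. Without these constructions the equalities in (i) and (ii) are unproved. Also, the extra unit in (iii) via ``take $y$ unipotent'' is only valid when $q$ is prime: for $q=3^3,7^3,\dots\equiv 3\mod{4}$ a unipotent element lies in subfield subgroups (and in $A_4$-type subgroups when $p=3$) which also contain involutions, so an arbitrary unipotent can fail; one needs either a probabilistic estimate over the unipotent class or a different witness, e.g.\ a split-torus generator, whose only overgroups are two Borels and $D_{q-1}$, giving ratio $1/q$ against involutions.

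In (iv) the equality $s(G)-u(G)=\frac{1}{2}(q+1)$ is asserted but not derived: you never bound $u(G)$ from above (part (iii) only gives $u(G)\geq q-4$), and your claim that the smallest blocking tuple has size exactly $\frac{1}{2}(3q-5)$ both overshoots the theorem (it would pin $s(G)$ exactly) and rests on an unconstructed covering family; note also that ``about $q-4$ involutions meeting all conjugates of $D_{q+1}$'' is impossible, since each involution lies in only $(q+3)/2$ of the $q(q-1)/2$ conjugates, so at least $q-3$ are required. The clean way to get the exact difference uses the disjointness you noticed, but assembled differently: for prime $q\equiv 3\mod{4}$ every nontrivial element lies in a Borel or in a conjugate of $D_{q+1}$ but never both, a nonsplit-torus generator has the latter as its unique maximal overgroup and a unipotent element has the former. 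Hence a tuple with no common complement at all is precisely one meeting every conjugate of $D_{q+1}$ together with a disjoint set meeting every Borel (minimum $\frac{1}{2}(q+1)$, via split elements whose fixed-point pairs partition the projective line), while the single hardest class to defeat is the nonsplit class. Writing $\beta$ for the minimum number of elements meeting every conjugate of $D_{q+1}$, this gives $u(G)+1=\beta$ and $s(G)+1=\beta+\frac{1}{2}(q+1)$, so $s(G)-u(G)=\frac{1}{2}(q+1)$ without knowing $\beta$; your write-up contains the raw ingredients for this but not the argument, and in particular no analysis of $u(G)$ at all.
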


\begin{question} \label{que:spread_psl2}
What is the (uniform) spread of $\PSL_2(q)$ when $q \equiv 3 \mod{4}$?
\end{question}

In short, determining the spread of simple groups is difficult. We conclude by commenting that the precise value of the spread of only two sporadic groups is known, namely $s(\mathrm{M}_{11}) = 3$ \cite{ref:Woldar07} (see also \cite{ref:BradleyHolmes07}) and $s(\mathrm{M}_{23}) = 8064$ \cite{ref:BradleyHolmes07, ref:Fairbairn12JGT}.

In contrast, the exact spread and uniform spread of symmetric groups is known. In a series of papers in the late 1960s \cite{ref:Binder68,ref:Binder70,ref:Binder70MZ,ref:Binder73}, Binder determined the spread of $\Sm{n}$ and also showed that $u(\Sm{n}) \geq 1$ unless $n \in \{4,6\}$ (Binder used different terminology). However, the uniform spread of symmetric groups was only completely determined in a 2021 paper of Burness and Harper \cite{ref:BurnessHarper20}; indeed, showing that $u(\Sm{n}) \geq 2$ for even $n > 6$ involves both a long combinatorial argument and a CFSG-dependent group theoretic argument (see \cite[Theorem~3 \& Remark~3]{ref:BurnessHarper20}). We say more on $\Sm{6}$ in Example~\ref{ex:spread_s6}.

\begin{theorem} \label{thm:spread_sym}
Let $G = \Sm{n}$ with $n \geq 5$. Then 
\[
s(G) = \left\{\begin{array}{ll} 
2 & \text{if $n$ is even} \\
3 & \text{if $n$ is odd} \\
\end{array} \right.\quad \text{and} \quad
u(G) = \left\{\begin{array}{ll} 
0 & \text{if $n=6$} \\
2 & \text{otherwise.} \\
\end{array} \right.
\]
\end{theorem}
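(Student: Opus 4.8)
The plan is to combine Binder's determination of $s(\Sm{n})$ with the Burness--Harper determination of $u(\Sm{n})$, proving the upper bounds first and then the lower bounds, and splitting each according to the parity of $n$. Two elementary facts will do most of the work: since $A_n$ is maximal in $\Sm{n}$, a generating pair of $\Sm{n}$ must contain an odd permutation; and $\<\sigma,\tau\>\neq\Sm{n}$ whenever $\sigma$ and $\tau$ both stabilise a partition of $\{1,\dots,n\}$ into two nonempty blocks $\{P,Q\}$, since they then lie in the proper subgroup $\Sm{P}\times\Sm{Q}$ (or $\Sm{n/2}\wr\Sm{2}$).

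For the upper bounds, consider the three nontrivial elements $x_1=(1\,2)$, $x_2=(2\,3)$, $x_3=(1\,3)$. Every $y\in\Sm{n}$ stabilises a partition of $\{1,\dots,n\}$ into two nonempty blocks: if $y$ is not an $n$-cycle take one block to be a union of some of its cycles, and if $y$ is an $n$-cycle (which forces $n$ even, the only relevant case as then $y$ is odd) take the two blocks to be the two $y^2$-orbits. By pigeonhole two of $1,2,3$ lie in a common block, so the corresponding $x_i$ also stabilises that partition and $\<x_i,y\>\neq\Sm{n}$. This gives $s(\Sm{n})\leq 2$, hence $u(\Sm{n})\leq 2$, for even $n$. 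For odd $n$ adjoin $x_4=(1\,2\,3)$: an $n$-cycle $y$ is now even, so $\<x_4,y\>\leq A_n$, while every other $y$ is defeated by $x_1,x_2,x_3$; hence $s(\Sm{n})\leq 3$ for odd $n$. For the remaining bound $u(\Sm{n})\leq 2$ with $n$ odd, take any conjugacy class $C=s^{\Sm{n}}$: if $s$ is even then $\<x,y\>\leq A_n$ for every even $x$ and every $y\in C$, so $u$ already fails for $k=1$; and if $s$ is odd then $s$ is not an $n$-cycle (those are even when $n$ is odd), so every member of $C$ has at least two cycles and is defeated by $x_1,x_2,x_3$.

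For the lower bounds in the routine cases I would argue as in Example~\ref{ex:alternating}. To obtain $u(\Sm{n})\geq 2$ for odd $n$, let $s$ have cycle type $[2,\,n-2]$, the product of a transposition and an $(n-2)$-cycle, which is odd. Coprimality of $2$ and $n-2$ shows the only intransitive maximal overgroup of $s$ is $\Sm{2}\times\Sm{n-2}$ and rules out imprimitive overgroups, and since a power of $s$ is a transposition, the classical fact that a primitive subgroup of $\Sm{n}$ containing a transposition equals $\Sm{n}$ shows $s$ has no proper primitive overgroup; thus $\Sm{2}\times\Sm{n-2}$ is its unique maximal overgroup. It then remains only to note that for any two nontrivial $x_1,x_2\in\Sm{n}$ there is a $2$-subset of $\{1,\dots,n\}$ fixed setwise by neither, so that a suitable conjugate $s^g$ satisfies $\<x_1,s^g\>=\<x_2,s^g\>=\Sm{n}$. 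For $s(\Sm{n})\geq 3$ with $n$ odd, given any three nontrivial elements one must produce a single completing $y$, not restricted to one conjugacy class; following Binder this is a case analysis on the cycle types of $x_1,x_2,x_3$, with $y$ drawn from a controlled family such as $n$-cycles or elements of type $[2,n-2]$. Finally, for $n=6$ the upper bound gives $s(\Sm{6})\leq 2$ and $s(\Sm{6})\geq 2$ is a direct check, while $u(\Sm{6})=0$ is exceptional: for each of the five odd conjugacy classes of $\Sm{6}$ some nontrivial element is completed by no member of that class, the conceptual reason being that the exceptional outer automorphism of $\Sm{6}$ interchanges certain pairs of these classes and so prevents any single one from serving uniformly.

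The hard part, as the surrounding discussion indicates, is $u(\Sm{n})\geq 2$ for even $n>6$ --- the theorem of Burness and Harper. Here the clean two-cycle element is unavailable, since an odd permutation of even degree cannot have exactly two cycles (the two cycle lengths would have opposite parities and so could not sum to an even number), so one must instead work with a carefully chosen element $s$ possessing a bounded number of maximal overgroups, all of them stabilisers of subsets. Proving that every pair of nontrivial elements can be simultaneously conjugated off all of these overgroups is the long combinatorial argument, and the group-theoretic input needed to control the subgroups that can arise (in particular the primitive ones, in the edge cases) is where the Classification of Finite Simple Groups enters. I expect this even-degree uniform-spread bound to be by far the main obstacle; everything else is short.
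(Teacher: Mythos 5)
The paper itself gives no proof of Theorem~\ref{thm:spread_sym}: it cites Binder's papers for $s(\Sm{n})$ and Burness--Harper \cite{ref:BurnessHarper20} for $u(\Sm{n})$, its only in-text argument being Example~\ref{ex:spread_s6}, which proves $u(\Sm{6})=0$. Your proposal follows the same overall route --- you likewise defer $s(\Sm{n})\geq 3$ for odd $n$ to Binder and $u(\Sm{n})\geq 2$ for even $n>6$ to Burness--Harper --- but the pieces you do prove are correct and go beyond what the survey writes down: the upper bounds via the transpositions on $\{1,2,3\}$ together with a two-block partition preserved both by any $y$ with at least two cycles (one block a union of cycles) and, when $y$ is an $n$-cycle of even degree, by the two $y^2$-orbits, with $(1\,2\,3)$ disposing of $n$-cycles in odd degree; and $u(\Sm{n})\geq 2$ for odd $n$ via an element of type $[2,n-2]$, whose unique maximal overgroup is $\Sm{2}\times\Sm{n-2}$ by the coprimality argument of Example~\ref{ex:alternating} plus Jordan's theorem on primitive groups containing a transposition, together with the fact (true and easy, though you only assert it) that any two nontrivial permutations fail to stabilise some common $2$-subset. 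Your $\Sm{6}$ discussion is consistent with Example~\ref{ex:spread_s6}, whose argument is tidier: a witnessing class must consist of odd permutations with at most two cycles, hence of $6$-cycles, and the outer automorphism carries that class to type $[3,2,1]$, a contradiction.

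One correction to your sketch of the hard even-degree case: precisely the argument of Example~\ref{ex:spread_s6} forces any class witnessing $u(\Sm{n})>0$ for even $n$ to consist of $n$-cycles, so there is no ``carefully chosen element whose maximal overgroups are all stabilisers of subsets'' --- an $n$-cycle lies in no intransitive subgroup at all. Its maximal overgroups are the partition stabilisers $\Sm{a}\wr\Sm{b}$ with $n=ab$ together with the primitive groups containing an $n$-cycle; the long combinatorial work in \cite{ref:BurnessHarper20} deals with the imprimitive overgroups, and the CFSG enters (as you correctly anticipated) in controlling the primitive ones. Since you invoke \cite{ref:BurnessHarper20} for this case anyway, the mis-description does not invalidate your proposal.
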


\vspace{0.5\baselineskip}

\textbf{Methods. A probabilistic approach. } As we turn to discuss the key method behind these results, we return to Example~\ref{ex:alternating} where we proved that $u(G) \geq 1$ when $G = A_n$ for even $n > 6$. We found an element $s \in G$ contained in a unique maximal subgroup $H$ of $G$. Since $G$ is simple, $H$ is corefree, so $\bigcap_{g \in G} H^g = 1$, which means that for each nontrivial $x \in G$ there exists $g \in G$ such that $x \not\in H^g$. This implies that $\< x, s^g \> = G$, so $s^G$ witnesses $u(G) \geq 1$. This argument can be generalised in two ways: one yields Lemma~\ref{lem:spread}, giving a better lower bound on the uniform spread of $G$ and the other yields Lemma~\ref{lem:udn_bases}, pertaining to the uniform domination number of $G$, which we will meet in the next section.

Lemma~\ref{lem:spread} takes a probabilistic approach, so we need some notation. For a finite group $G$ and elements $x,s \in G$, we write
\begin{equation} \label{eq:q}
Q(x,s) = \frac{|\{ y \in s^G \mid \< x,y \> \neq G \}|}{|s^G|},
\end{equation}
which is the probability a uniformly random conjugate of $s$ does not generate with $x$, and write $\M(G,s)$ for the set of maximal subgroups of $G$ that contain $s$.

\begin{lemma} \label{lem:spread}
Let $G$ be a finite group and let $s \in G$.
\begin{enumerate}
\item For $x \in G$, 
\[ 
Q(x,s) \leq \sum_{H \in \M(G,s)}^{} \frac{|x^G \cap H|}{|x^G|}.
\]
\item For a positive integer $k$, if $Q(x,s) < \frac{1}{k}$ for all prime order elements $x \in G$, then $u(G) \geq k$ is witnessed by $s^G$.
\end{enumerate}
\end{lemma}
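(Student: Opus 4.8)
The plan is to prove the two parts in sequence, with part~(i) being a counting argument and part~(ii) a union-bound deduction from it.

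For part~(i), I would start from the observation that if $y \in s^G$ and $\langle x, y \rangle \neq G$, then $\langle x, y \rangle$ is contained in some maximal subgroup $M$ of $G$; in particular $x \in M$ and $y \in M$. Since $y$ is conjugate to $s$ and $y \in M$, the subgroup $M$ contains a conjugate of $s$, so $M$ is itself conjugate to some $H \in \M(G,s)$ (the set of maximal overgroups of $s$ is a union of $G$-conjugacy classes, and every maximal subgroup containing a conjugate of $s$ is conjugate to a member of $\M(G,s)$). Thus the ``bad'' set $\{ y \in s^G \mid \langle x, y \rangle \neq G \}$ is contained in $\bigcup_{M} (s^G \cap M)$, where $M$ ranges over all maximal subgroups conjugate to some element of $\M(G,s)$. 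Applying the union bound and then summing over conjugates, one gets $|\{ y \in s^G \mid \langle x,y\rangle \neq G\}| \le \sum_{H \in \M(G,s)} |\{(H', y) : H' \sim H,\ y \in s^G \cap H'\}| / (\text{multiplicity})$; the cleanest route is a double-counting identity. Counting pairs $(H', y)$ with $H'$ conjugate to a fixed $H \in \M(G,s)$ and $y \in s^G \cap H'$: on one hand this is $|\{H' \sim H\}| \cdot |s^G \cap H|$, and on the other hand it is $\sum_{y \in s^G} |\{H' \sim H : y \in H'\}|$. Since $|\{H' \sim H\}| = |G : N_G(H)|$ and, for a fixed $y \in s^G$, the number of conjugates of $H$ containing $y$ is at least... — in fact the standard trick is instead to fix $x$ and count pairs $(y, H')$ with $y \in s^G$, $x,y \in H'$, $H' \sim H$: this equals $\sum_{H' \sim H,\, x \in H'} |s^G \cap H'|$. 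Bounding $|s^G \cap H'| \le |s^G|$ is too weak; the right normalisation comes from dividing through by $|s^G|$ and recognising $|x^G \cap H|/|x^G|$ as the proportion, via the orbit-counting fact that $|\{H' \sim H : x \in H'\}| = |x^G \cap H| \cdot |G:N_G(H)| / |x^G|$ together with $|s^G \cap H'| = |s^G \cap H|$ for $H' \sim H$. Substituting and dividing by $|s^G|$ yields exactly $Q(x,s) \le \sum_{H \in \M(G,s)} |x^G \cap H|/|x^G|$, once one checks that distinct $H \in \M(G,s)$ contribute the disjoint conjugacy classes making the union bound over $\M(G,s)$ legitimate. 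I expect the bookkeeping with conjugacy-class sizes and normalisers to be the main obstacle here; it is routine but easy to get off by a factor.

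For part~(ii), suppose $Q(x,s) < 1/k$ for every prime order element $x \in G$. First I would reduce the general case to prime order elements: given arbitrary nontrivial $x_1, \dots, x_k \in G$, replace each $x_i$ by a suitable power $x_i'$ of prime order; since $\langle x_i', y \rangle \le \langle x_i, y \rangle$, if $y$ generates with each $x_i'$ then it generates with each $x_i$. So it suffices to find $y \in s^G$ with $\langle x_i', y \rangle = G$ for all $i$, where each $x_i'$ has prime order. The event that a uniformly random $y \in s^G$ fails to generate with $x_i'$ has probability $Q(x_i', s) < 1/k$, so by the union bound the probability that $y$ fails for at least one $i$ is strictly less than $k \cdot (1/k) = 1$. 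Hence there exists $y \in s^G$ that generates with all of $x_1', \dots, x_k'$, and therefore with all of $x_1, \dots, x_k$. This shows $s^G$ witnesses $u(G) \ge k$, as required.
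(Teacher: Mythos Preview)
Your argument for part~(ii) is correct and matches the paper's proof exactly: reduce to prime-order elements by passing to powers, then apply the union bound over the $k$ events.

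For part~(i), your approach can in principle be made to work, but it is considerably more involved than necessary and contains a genuine misconception. You write that ``the set of maximal overgroups of $s$ is a union of $G$-conjugacy classes''; this is false --- $\M(G,s)$ consists of the maximal subgroups containing the \emph{specific} element $s$, and it is not closed under $G$-conjugation (e.g.\ if $s$ lies in a unique maximal subgroup $H$, then $\M(G,s)=\{H\}$). Your closing remark that one must check ``distinct $H \in \M(G,s)$ contribute the disjoint conjugacy classes'' is similarly off: distinct members of $\M(G,s)$ may well be $G$-conjugate, and this causes no difficulty because the sum in the lemma is over individual subgroups, not classes. Your double-counting can be salvaged by keeping track of how many members of each conjugacy class lie in $\M(G,s)$, but this is exactly the bookkeeping you were worried about.

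The paper sidesteps all of this with a single change of variable. Write $y=s^g$; then $\langle x,s^g\rangle \neq G$ if and only if $\langle x^{g^{-1}},s\rangle \neq G$, which holds if and only if $x^{g^{-1}} \in H$ for some $H \in \M(G,s)$. Choosing $g$ uniformly from $G$ (so that $s^g$ is uniform on $s^G$ and, simultaneously, $x^{g^{-1}}$ is uniform on $x^G$), the union bound over the \emph{fixed} finite set $\M(G,s)$ gives
\[
Q(x,s) \leq \sum_{H \in \M(G,s)} \Pr_{g \in G}\bigl[x^{g^{-1}} \in H\bigr] = \sum_{H \in \M(G,s)} \frac{|x^G \cap H|}{|x^G|}
\]
directly. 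The trick is to transfer the conjugation from $s$ to $x$: the maximal overgroups then stay fixed, and there is no need to track conjugates of $H$, normalisers, or any double-counting identity.
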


\begin{proof}
For (i), let $x \in G$. Then $\<x,s^g\> \neq G$ if and only if $x \in H^g$, or equivalently $x^{g^{-1}} \in H$, for some $H \in \M(G,s)$. Therefore, \[
Q(x,s) =  \frac{|\{y \in s^G \mid \< x, y \> \neq G \}|}{|s^G|} \leq \sum_{H \in \M(G,s)} \frac{|x^G \cap H|}{|x^G|}.
\]
For (ii), fix $k$. To prove that $u(G) \geq k$ is witnessed by $s^G$, it suffices to prove that for all elements $x_1,\dots,x_k \in G$ of prime order there exists $y \in s^G$ such that $\<x_i,y\>=G$ for all $1 \leq i \leq k$. Therefore, let $x_1,\dots,x_k \in G$ have prime order. If $Q(x_i,s) < \frac{1}{k}$ for all $1 \leq i \leq k$, then 
\[
\frac{|\{y \in s^G \mid \text{$\< x_i, y \> = G$ for all $1 \leq i \leq k$} \}|}{|s^G|} \geq 1 - \sum_{i=1}^{k}Q(x_i,s) > 0,
\]
so there exists $y \in s^G$ such that $\<x_i,y\>=G$ for all $1 \leq i \leq k$.
\end{proof}

Therefore, to obtain lower bounds on the uniform spread (and hence spread) of a finite group, it is enough to (a) identify an element whose maximal overgroups $H$ are tightly constrained, and then (b) for each such $H$ and for all prime order $x \in G$, bound the quantity $\frac{|x^G \cap H|}{|x^G|}$.  

The ratio $\frac{|x^G \cap H|}{|x^G|}$ is the well-studied \emph{fixed point ratio}. More precisely, $\frac{|x^G \cap H|}{|x^G|}$ is nothing other than the proportion of points in $G/H$ fixed by $x$ in the natural action of $G$ on $G/H$. These fixed point ratios, in the context of primitive actions of almost simple groups, have seen many applications via probabilistic methods, not just to spread, but also to base sizes (e.g. the Cameron--Kantor conjecture) and monodromy groups (e.g. the Guralnick--Thompson conjecture), see Burness' survey article \cite{ref:Burness18}. 

To address task (a), one applies the well-known and extensive literature on the subgroup structure of almost simple groups. For (b), one appeals to the bounds on fixed point ratios of primitive actions of almost simple groups, the most general of which is \cite[Theorem~1]{ref:LiebeckSaxl91} of Liebeck and Saxl. This states that
\begin{equation}\label{eq:fpr}
\frac{|x^G \cap H|}{|x^G|} \leq \frac{4}{3q}
\end{equation}
for any almost simple group of Lie type $G$ over $\F_q$, maximal subgroup $H \leq G$ and nontrivial element $x \in G$, with known exceptions. This is essentially best possible, since $\frac{|x^G \cap H|}{|x^G|} \approx q^{-1}$ when $q$ is odd, $G = \mathrm{PGL}_n(q)$, $H$ is the stabiliser of a $1$-space of $\F_q^n$ and $x$ lifts to the diagonal matrix $[-1,1,1, \dots, 1] \in \GL_n(q)$. However, there are much stronger bounds that take into account the particular group $G$, subgroup $H$ or element $x$ (see \cite[Section~2]{ref:Burness18} for a survey). 

Bounding uniform spread via Lemma~\ref{lem:spread} was the approach introduced by Guralnick and Kantor in their 2000 paper \cite{ref:GuralnickKantor00} where they prove that $u(G) \geq 1$ for all nonabelian finite simple groups $G$. Clearly this approach also easily yields further probabilistic information and we refer the reader to Burness' survey article \cite{ref:Burness19} for much more on this approach. We will give just one example, which we will return to later in the article (see \cite[Example~3.9]{ref:Burness19}).

\begin{example} \label{ex:spread_e8}
Let $G = E_8(q)$ and let $s$ generate a cyclic maximal torus of order $\Phi_{30}(q) = q^8+q^7-q^5-q^4-q^3+q+1$. Weigel proved that $\M(G,s) = \{ H \}$ where $H = N_G(\<s\>) = \<s\>:30$ (see \cite[Section~4(j)]{ref:Weigel92}). Applying Lemma~\ref{lem:spread} with the bound in \eqref{eq:fpr}, for all nontrivial $x \in G$ we have $u(G) \geq 1$ since
\[
\sum_{H \in \M(G,s)} \frac{|x^G \cap H|}{|x^G|} \leq \frac{4}{3q} \leq \frac{2}{3} < 1.
\]
However, we can do better: $|x^G \cap H| \leq |H| \leq q^{14}$ and $|x^G| > q^{58}$ for all nontrivial elements $x \in G$, so $u(G) \geq q^{44}$ since
\[
\sum_{H \in \M(G,s)} \frac{|x^G \cap H|}{|x^G|} < \frac{1}{q^{44}}.
\]
\end{example}

While the overwhelming majority of results on (uniform) spread are established via the probabilistic method encapsulated in Lemma~\ref{lem:spread}, there are cases where this approach fails, as the following example highlights.

\begin{example} \label{ex:spread_sp}
Let $m \geq 3$ and let $G = \Sp_{2m}(2)$. By Theorem~\ref{thm:breuer_guralnick_kantor}, we know that $u(G)=2$. However, if $x$ is a transvection, then $Q(x,s) > \frac{1}{2}$ for all $s \in G$. This is proved in \cite[Proposition~5.4]{ref:BreuerGuralnickKantor08}, and we give an indication of the proof. Every element of $G = \Sp_{2m}(2)$ is contained in a subgroup of type ${\rm O}^+_{2m}(2)$ or ${\rm O}^-_{2m}(2)$ (see \cite{ref:Dye79}, for example). 

Assume that $s$ is contained in a subgroup $H \cong {\rm O}^-_{2m}(2)$. The groups $\Sp_{2m}(2)$ and ${\rm O}^\pm_{2m}(2)$ contain $2^{2m}-1$ and $2^{2m-1} \mp 2^{m-1}$ transvections, respectively, so
\[
Q(x,s) \geq \frac{2^{2m-1}+2^{m-1}}{2^{2m}-1} = \frac{2^{m-1}}{2^m-1} > \frac{1}{2}.
\]

A more involved argument gives $Q(x,s) > \frac{1}{2}$ if $s$ is contained in a subgroup of type ${\rm O}^+_{2m}(2)$ but none of type ${\rm O}^-_{2m}(2)$, relying on $s$ being reducible here.
\end{example}

\subsection{Uniform domination} \label{ss:finite_udn}

We began by observing that any finite simple group $G$ is $\frac{3}{2}$-generated, that is
\begin{equation} \label{eq:generation}
\text{for all $x \in G \setminus 1$ there exists $y \in G$ such that $\<x,y\> = G$.}
\end{equation}
We then looked to strengthen \eqref{eq:generation} by increasing the scope of the first quantifier. Recall that the \emph{spread} of $G$, denoted $s(G)$, is the greatest $k$ such that
\[
\text{for all $x_1, \dots, x_k \in G$ there exists $y \in G$ such that $\<x_1,y\> = \cdots = \< x_k,y\> = G$.} 
\]
We also have a related notion: the \emph{uniform spread} of $G$, denoted $u(G)$, is the greatest $k$ for which there exists an element $s \in G$ such that
\[
\text{for all $x_1, \dots, x_k \in G$ there exists $y \in s^G$ such that $\<x_1,y\> = \cdots = \< x_k,y\> = G$.}
\]
The notion of uniform spread inspires us to strengthen \eqref{eq:generation} by narrowing the range of the second quantifier. That is, we say that the \emph{total domination number} of $G$, denoted $\gamma_t(G)$, is the least size of a subset $S \subseteq G$ such that
\[
\text{for all $x \in G \setminus 1$ there exists $y \in S$ such that $\<x,y\> = G$.}
\]
Again we have a related notion: the \emph{uniform domination number} of $G$, denoted $\gamma_u(G)$, is the least size of a subset $S \subseteq G$ of conjugate elements such that
\[
\text{for all $x \in G \setminus 1$ there exists $y \in S$ such that $\<x,y\> = G$.}
\]
These latter two concepts were introduced by Burness and Harper in \cite{ref:BurnessHarper19} and studied further in \cite{ref:BurnessHarper20}. The terminology is motivated by the generating graph (see Section~\ref{ss:finite_graph}).

Let $G$ be a nonabelian finite simple group. Clearly $2 \leq \gamma_t(G) \leq \gamma_u(G)$, and since $u(G) \geq 1$, there exists a conjugacy class $s^G$ such that $\gamma_u(G) \leq |s^G|$. However, the class exhibited in Guralnick and Kantor's proof of $u(G) \geq 1$ is typically very large (for groups of Lie type, $s$ is usually a regular semisimple element), so it is natural to seek tighter upper bounds on $\gamma_u(G)$. The following result of Burness and Harper does this \cite[Theorems~2, 3 \& 4]{ref:BurnessHarper19} (see \cite[Theorem~4(i)]{ref:BurnessHarper20} for the refined upper bound in (iii)).

\newpage
\begin{theorem} \label{thm:udn}
Let $G$ be a nonabelian finite simple group. 
\begin{enumerate}
\item If $G = A_n$, then $\gamma_u(G) \leq 77 \log_2{n}$.
\item If $G$ is classical of rank $r$, then $\gamma_u(G) \leq 7r+70$.
\item If $G$ is exceptional, then $\gamma_u(G) \leq 5$.
\item If $G$ is sporadic, then $\gamma_u(G) \leq 4$.
\end{enumerate}
\end{theorem}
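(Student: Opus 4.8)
The plan is to reduce the problem to bounding base sizes, via Lemma~\ref{lem:udn_bases}, and then to treat the four families of simple groups in turn. The key observation is the following reformulation: if $s\in G$ lies in a unique maximal subgroup $H$, then $\langle x,s^g\rangle=G$ if and only if $x\notin H^g$, so a set of conjugates $\{s^{g_1},\dots,s^{g_c}\}$ is a uniform dominating set if and only if $\bigcap_i H^{g_i}=1$ --- equivalently, if and only if the cosets $Hg_1,\dots,Hg_c$ form a base for the primitive action of $G$ on $G/H$. Hence $\gamma_u(G)\le b(G,H)$, the least size of such a base. (When $s$ has two or more maximal overgroups I would use a mild variant of this, bounding $\gamma_u(G)$ by a base size for a disjoint union of coset actions, at some cost in the constants.) So for each family I would (a) exhibit an element $s$ whose set $\M(G,s)$ of maximal overgroups is as small and favourable as possible, and (b) bound the resulting base size.

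For $G=A_n$ I would take $s$ of cycle shape $[k,n-k]$ with $\gcd(k,n-k)=1$ and $k$ as close to $n/2$ as possible, exactly as in Example~\ref{ex:alternating}: then $\M(G,s)=\{H\}$ with $H=(\Sm{k}\times \Sm{n-k})\cap A_n$, and $G/H$ is the action of $A_n$ on the $k$-element subsets of $\{1,\dots,n\}$. A base is a family of $k$-subsets $B_1,\dots,B_c$ with no nontrivial common setwise stabiliser; since $c$ subsets partition $\{1,\dots,n\}$ into at most $2^c$ atoms, and with $k\approx n/2$ one can arrange enough atoms to be singletons, a base of size $O(\log_2 n)$ exists. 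Tracking the constants --- and treating small $n$, and the remaining congruence classes of $n$, separately --- should give $\gamma_u(A_n)\le 77\log_2 n$.

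For $G$ classical of rank $r$ over $\F_q$ I would look for an element $s$ whose only maximal overgroups are subspace stabilisers of small type --- for instance, an element of $\mathrm{SL}_n(q)$ that is a one-dimensional block together with an irreducible block preserves exactly one $1$-space and one hyperplane --- so that $G/H$, or a disjoint union of two such actions, is a subspace action. I would then invoke the fact that base sizes of classical groups in subspace actions grow linearly in the dimension. That fact is itself established by the probabilistic fixed-point-ratio method, the base-size analogue of Lemma~\ref{lem:spread}: a uniformly random $c$-tuple of points of $G/H$ fails to be a base with probability at most $\sum_x |x^G|\,(|x^G\cap H|/|x^G|)^c$, the sum over prime-order $x$, and the bound \eqref{eq:fpr} of Liebeck and Saxl, with its refinements, forces this below $1$ once $c$ is a suitable linear function of $r$. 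Bookkeeping across $\PSL$, $\mathrm{PSU}$, $\mathrm{PSp}$ and the orthogonal groups, and cleaning up small ranks and small fields, should yield $\gamma_u(G)\le 7r+70$.

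The exceptional groups have rank at most $8$, so after choosing $s$ in a suitable maximal subgroup --- a parabolic, or the normaliser of a maximal torus as in Example~\ref{ex:spread_e8} --- the same base-size estimate is bounded by an absolute constant, which one refines (with machine computation for the smallest $q$) to $\gamma_u(G)\le 5$. For the sporadic groups I would simply compute: for each $G$, search over conjugacy classes $s^G$ and over small sets of conjugates, checking the domination condition directly, to obtain $\gamma_u(G)\le 4$. I expect the main obstacle to be the classical case: one must produce, uniformly over all families and all ranks, an element $s$ with tightly controlled --- ideally unique --- maximal overgroups of subspace type, which rests on the full strength of Aschbacher's subgroup theorem and the classification of maximal subgroups of classical groups together with a careful treatment of small-rank and small-field exceptions; and one must then push the fixed-point-ratio estimates through enough cases to force the probabilistic sum below $1$ with a genuinely linear bound in $r$ and explicit constants.
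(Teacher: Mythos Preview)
Your proposal is correct and follows essentially the same strategy the paper outlines: reduce via Lemma~\ref{lem:udn_bases} to base sizes, then handle $A_n$ with Halasi's bound on the $k$-subset action (Example~\ref{ex:udn_an}), classical groups with the probabilistic fixed-point-ratio method of Lemma~\ref{lem:udn_prob} (Example~\ref{ex:udn_prob}), exceptional groups via torus normalisers and the Burness--Thomas base-two result (Example~\ref{ex:udn_e8}), and sporadic groups by computation. The only deviation worth flagging is that for classical groups the paper chooses $s$ with two roughly balanced irreducible blocks rather than your $1$-dimensional block plus an irreducible block; the balanced choice is what makes the primitive-prime-divisor control of $\M(G,s)$ go through uniformly across the classical families.
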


In this generality, these bounds are optimal up to constants. For example, if $n \geq 6$ is even, then $\log_2{n} \leq \gamma_t(A_n) \leq \gamma_u(A_n) \leq 2 \log_2{n}$, and if $G$ is $\Sp_{2r}(q)$ with $q$ even or $\Omega_{2r+1}(q)$ with $q$ odd, then $r \leq \gamma_t(G) \leq \gamma_u(G) \leq 7r$ \cite[Theorems~3(i) \& 6.3(iii)]{ref:BurnessHarper20}. Regarding the bounds in (iii) and (iv), for sporadic groups, $\gamma_u(G) = 4$ is witnessed by $G = {\rm M}_{11}$  \cite[Theorem~3]{ref:BurnessHarper19}, but the best lower bound for exceptional groups is $\gamma_u(G) \geq 3$ given by $G = F_4(q)$ \cite[Lemma~6.17]{ref:BurnessHarper20}.

\begin{question} \label{que:udn_alt}
Does there exist a constant $c$ such that for all $n \geq 5$ we have $\log_p{n} \leq \gamma_t(A_n) \leq \gamma_u(A_n) \leq c \log_p{n}$ where $p$ is the least prime divisor of $n$?
\end{question}

By \cite[Theorem~4(ii)]{ref:BurnessHarper20}, we know that $\gamma_t(A_n) \geq \log_p{n}$, so to provide an affirmative answer to Question~\ref{que:udn_alt}, it suffices to prove that $\gamma_u(A_n) \leq c \log_p{n}$.

\begin{question} \label{que:udn_lie}
Does there exist a constant $c$ such that for all finite simple groups of Lie type $G$ other than $\Sp_{2r}(q)$ with $q$ even and $\Omega_{2r+1}(q)$ with $q$ odd, we have $\gamma_u(G) \leq c$?
\end{question}

By Theorem~\ref{thm:udn}, to answer Question~\ref{que:udn_lie}, it suffices to consider classical groups of large rank, and it was shown in \cite[Theorem~6.3(ii)]{ref:BurnessHarper19} that $c=15$ suffices for some families of these groups. Affirmative answers to Questions~\ref{que:udn_alt} and~\ref{que:udn_lie} would answer Question~\ref{que:udn_tdn} too.

\begin{question} \label{que:udn_tdn}
Does there exist a constant $c$ such that for all nonabelian finite simple groups $G$ we have $\gamma_u(G) \leq c \cdot \gamma_t(G)$?
\end{question}

The smallest possible value of $\gamma_u(G)$ is $2$ (since $G$ is not cyclic), and an almost complete classification of when this is achieved was given in \cite[Corollary~7]{ref:BurnessHarper20}.

\begin{theorem} \label{thm:udn_two}
Let $G$ be a nonabelian finite simple group. Then $\gamma_u(G) = 2$ only if $G$ is one of the following
\begin{enumerate}
\item $A_n$ for prime $n \geq 13$
\item $\PSL_2(q)$ for odd $q \geq 11$
\item[] $\PSL^\e_n(q)$ for odd $n$, but not $n=3$ with $(q,\e) \in \{ (2,+), (4,+), (3,-), (5,-) \}$
\item[] ${\rm PSp}_{4m+2}(q)^\ast$ for odd $q$ and $m \geq 2$, and ${\rm P}\Omega^\pm_{4m}(q)^\ast$ for $m \geq 2$ 
\item ${}^2B_2(q)$,  ${}^2G_2(q)$, ${}^2F_4(q)$, ${}^3D_4(q)$, ${}^2E_6(q)$, $E_6(q)$, $E_7(q)$ , $E_8(q)$ 
\item ${\rm M}_{23}$, ${\rm J}_1$, ${\rm J}_4$, ${\rm Ru}$, ${\rm Ly}$, ${\rm O'N}$, ${\rm Fi}_{23}$, ${\rm Th}$, $\mathbb{B}$, $\mathbb{M}$ or ${\rm J}_3^\ast$, ${\rm He}^\ast$, ${\rm Co}_1^\ast$, ${\rm HN}^\ast$.
\end{enumerate}
Moreover, $\gamma_u(G) = 2$ in all the cases without an asterisk.
\end{theorem}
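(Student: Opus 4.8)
The plan is to translate $\gamma_u(G) = 2$ into a statement about base sizes and then to verify that statement family by family using the Classification. Recall from the discussion around Lemma~\ref{lem:spread} that an element $x \in G$ fails to generate with $s^g$ precisely when $x$ lies in $H^g$ for some $H \in \M(G,s)$; hence a pair of conjugates $\{s^{g_1}, s^{g_2}\}$ witnesses $\gamma_u(G) \leq 2$ if and only if the sets $\bigcup_{H \in \M(G,s)} H^{g_1}$ and $\bigcup_{H \in \M(G,s)} H^{g_2}$ meet only in the identity. In the typical situation where $s$ has a unique maximal overgroup $H$, this says exactly that $H^{g_1} \cap H^{g_2} = 1$, i.e. that $\{Hg_1, Hg_2\}$ is a base for the faithful primitive action of $G$ on $G/H$; so $\gamma_u(G) = 2$ is witnessed by the class $s^G$ if and only if $b(G, G/H) = 2$, which is the content of Lemma~\ref{lem:udn_bases}. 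Thus the problem reduces to deciding, for each finite simple $G$, whether $G$ has a core-free maximal subgroup $H$ with $b(G,G/H) = 2$ that contains an element lying in no other maximal subgroup, together with a separate treatment of the few groups where one is forced to work with an element having several maximal overgroups.

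For the ``moreover'' clause, asserting $\gamma_u(G) = 2$ for the non-asterisked cases, the plan is to exhibit such a pair $(s,H)$ explicitly. For $G = A_n$ with $n$ prime one takes $s$ generating a Sylow $n$-subgroup and $H = N_G(\langle s \rangle)$: the maximal overgroups of $s$ are controlled by the classification of primitive permutation groups of prime degree, with the short list of exceptional almost simple examples (such as $\PSL_3(3) \leq A_{13}$) requiring individual attention, after which one checks the base-size condition. For $\PSL_2(q)$ with $q$ odd, and for the exceptional groups on the list, one takes $s$ to generate a suitable cyclic maximal torus and $H = N_G(\langle s \rangle)$; that $\M(G,s) = \{H\}$ follows from the results of Weigel and of Guralnick--Penttila--Praeger--Saxl on elements of large order, exactly as in Example~\ref{ex:spread_e8}, and the resulting primitive actions are already known to have base size $2$. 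The classical groups on the list are handled in the same spirit, with more bookkeeping over which maximal subgroups can contain $s$, and the sporadic cases are verified by direct computation using the character table library in GAP.

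For the necessity direction one must prove $\gamma_u(G) \geq 3$ for every simple $G$ not on the list. For alternating groups this follows in most cases from the chain $\gamma_u(G) \geq \gamma_t(G) \geq \log_p n$ (with $p$ the least prime divisor of $n$), which already settles all even $n$ and all composite $n$ with $n > p^2$; the remaining degrees $n = p^2$ and the small primes $n < 13$ are dealt with by bespoke arguments or computation. For the classical and exceptional groups not on the list one shows that no core-free maximal subgroup simultaneously has base size $2$ and contains an element in no other maximal subgroup, and that the constructions using elements with several maximal overgroups also fail; here one combines the classification of base-two primitive almost simple groups with the literature on maximal overgroups of elements of large order, treating small ranks, small fields and the relevant sporadic groups computationally. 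This necessity direction is the main obstacle, since one must exclude not merely the obvious construction but every conjugacy class and every pair of conjugates at once; it is precisely where the two competing estimates $\gamma_u(G) = 2$ and $\gamma_u(G) \geq 3$ cannot be separated by present methods that the classification is left incomplete, namely at the asterisked groups.
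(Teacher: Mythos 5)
Your strategy is essentially the approach the paper takes and attributes to Burness--Harper \cite{ref:BurnessHarper20}: the survey states Theorem~\ref{thm:udn_two} without proof, but the methods it describes are precisely your programme --- reducing $\gamma_u(G)=2$ to base-size-two questions via Lemma~\ref{lem:udn_bases}, witnessing the non-asterisked cases by elements with tightly controlled maximal overgroups (torus normalisers with Burness--Thomas for exceptional groups and $\PSL_2(q)$, $n$-cycles for prime-degree alternating groups, Guralnick--Penttila--Praeger--Saxl/Weigel for the classical cases, computation for sporadics), proving $\gamma_u\geq 3$ elsewhere via Lemma~\ref{lem:udn_bases}(ii)-type lower bounds such as $\gamma_t(A_n)\geq\log_p n$, and correctly reading the asterisks as the cases present methods leave open. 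Your outline is sound; the extensive case-by-case analysis it defers is exactly what occupies the cited paper.
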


We will say that a subset $S \subseteq G$ of conjugate elements of $G$ is a \emph{uniform dominating set} of $G$ if for all nontrivial $x \in G$ there exists $y \in S$ such that $\< x, y \> = G$, so $\gamma_u(G)$ is the smallest size of a uniform dominating set of $G$. For groups $G$ such that $\gamma_u(G) = 2$, we know that there exists a uniform dominating set of size two. How abundant are such subsets? To this end, let $P(G,s,2)$ be the probability that two random conjugates of $s$ form a uniform dominating set for $G$, and let $P(G) = \max\{ P(G,s,2) \mid s \in G \}$. Then we have the following probabilistic result \cite[Corollary~8 \& Theorem~9]{ref:BurnessHarper20}.

\begin{theorem} \label{thm:udn_prob}
Let $(G_i)$ be a sequence of nonabelian finite simple groups such that $|G_i| \to \infty$. Assume that $\gamma_u(G_i) = 2$, and $G_i \not\in \{ {\rm PSp}_{4m+2}(q) \mid \text{odd $q$, $m \geq 2$}\} \cup \{ {\rm P}\Omega^\pm_{4m}(q) \mid \text{$m \geq 2$} \} \cup \{ {\rm J}_3, {\rm He}, {\rm Co}_1, {\rm HN} \}$. Then  \vspace{-3pt}
\[
P(G_i) \to \left\{ \begin{array}{ll} \frac{1}{2} & \text{if $G = \PSL_2(q)$} \\ 1 & \text{otherwise.} \end{array} \right. \vspace{-3pt}
\]
Moreover, $P(G_i) \leq \frac{1}{2}$ if and only if $G_i =\PSL_2(q)$ for $q \equiv 3 \mod{4}$ or $G_i \in \{ A_{13}, {\rm PSU}_5(2), {\rm Fi}_{23} \}$.
\end{theorem}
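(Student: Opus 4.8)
The plan is to rephrase the condition ``$\{y_1,y_2\}$ is a uniform dominating set'' in terms of conjugates of a single well-chosen maximal subgroup, and then estimate the resulting probability using fixed point ratios. As in the proof of Lemma~\ref{lem:spread}, it suffices to consider elements $x$ of prime order, since if $\langle x,y\rangle\neq G$ then $\langle x_0,y\rangle\neq G$ for any prime order power $x_0$ of $x$. Take $s$ to be an element of the conjugacy class used to establish $\gamma_u(G)=2$ in Theorems~\ref{thm:udn} and~\ref{thm:udn_two}; for the groups of interest one typically has $\M(G,s)=\{M\}$ with $M=N_G(\langle s\rangle)$, and then $s^{g_1}$ and $s^{g_2}$ fail to form a uniform dominating set exactly when some nontrivial element lies in both $M^{g_1}$ and $M^{g_2}$, so that $1-P(G,s,2)=\Pr_g[M\cap M^g\neq 1]$. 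In any case, the union bound over prime order $x$ together with Lemma~\ref{lem:spread}(i) gives
\[
1-P(G,s,2)\;\leq\;\sum_{x}Q(x,s)^2\;\leq\;\sum_{x}\Big(\sum_{H\in\M(G,s)}\frac{|x^G\cap H|}{|x^G|}\Big)^{\!2},
\]
and grouping by conjugacy class this is $\sum_{C}|C|\,Q(x_C,s)^2$.

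For every family of simple groups other than the two-dimensional linear groups and the excluded ones, I would show this sum tends to $0$, which forces $P(G)\to 1$. This is a sharpening of the fixed point ratio estimates already made in \cite{ref:BurnessHarper19,ref:BurnessHarper20} to prove $\gamma_u(G)=2$: those estimates give $\sum_x Q(x,s)^2<1$, whereas here one needs that $\frac{|x^G\cap H|}{|x^G|}$ is uniformly $o(|x^G|^{-1/2})$ for each $H\in\M(G,s)$, which follows from the strong bounds surveyed in \cite{ref:Burness18} in place of the generic bound \eqref{eq:fpr}, worked through the usual case division by Lie type, rank and element type. The families ${\rm PSp}_{4m+2}(q)$, ${\rm P}\Omega^\pm_{4m}(q)$ and the sporadic groups ${\rm J}_3$, ${\rm He}$, ${\rm Co}_1$, ${\rm HN}$ are precisely those for which this sum instead converges to a constant strictly between $0$ and $1$ — which is why the theorem omits them from the limit statement — and for these one records only that $P(G)$ stays bounded away from $0$ and $1$.

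The case $G=\PSL_2(q)$ is genuinely different, because the union bound diverges: a semisimple element $x$ of large prime order dividing $(q\mp 1)/2$ has $\frac{|x^G\cap M|}{|x^G|}$ of order $q^{-1}$, and there are enough such elements that their total contribution to $\sum_x Q(x,s)^2$ grows with $q$. The resolution is that such an $x$ lies in $M^{g_1}\cap M^{g_2}$ only when the tori $\langle s\rangle^{g_1}$ and $\langle s\rangle^{g_2}$ coincide, an event of probability $|N_G(\langle s\rangle)|/|G|\to 0$. So I would compute $\Pr_g[M\cap M^g\neq 1]$ directly, with $M=N_G(\langle s\rangle)$ dihedral of order $q\mp 1$: when the two conjugate tori are distinct, the intersection $M^{g_1}\cap M^{g_2}$ consists only of involutions, each inverting both tori, and counting — via the fixed point ratio of $G$ acting on the cosets of $M$ — the proportion of pairs $(g_1,g_2)$ for which $M^{g_1}$ and $M^{g_2}$ share such an involution gives $1-P(\PSL_2(q),s,2)\to\tfrac12$. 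Tracking the lower-order terms, in particular whether $\langle s\rangle$ has even order, equivalently whether $q\equiv 1 \mod{4}$ or $q\equiv 3 \mod{4}$, and checking that no other class of $s$ does better, then yields $P(\PSL_2(q))>\tfrac12$ in the former case and $P(\PSL_2(q))\leq\tfrac12$ in the latter, for all admissible $q$. The remaining finitely many groups $A_{13}$, ${\rm PSU}_5(2)$ and ${\rm Fi}_{23}$ with $P(G)\leq\tfrac12$ are identified by direct computation.

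I expect two main difficulties. The subtler one is the exact $\PSL_2(q)$ analysis: controlling the overcount coming from pairs of involutions lying simultaneously in $M^{g_1}\cap M^{g_2}$ accurately enough to determine not just the limit $\tfrac12$ but which side of it $P(\PSL_2(q))$ lies on, uniformly in $q$. The second is the bookkeeping needed to upgrade every $\gamma_u(G)=2$ estimate to $\sum_x Q(x,s)^2\to 0$ across all the remaining families, and in particular to verify that the six exceptional families are exactly the ones where this fails.
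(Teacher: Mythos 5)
The survey does not actually prove Theorem~\ref{thm:udn_prob}: it is quoted from \cite[Corollary~8 \& Theorem~9]{ref:BurnessHarper20}, and the machinery it describes (Lemma~\ref{lem:udn_prob}, and the observation that when $\M(G,s)=\{M\}$ the failure of $\{s^{g_1},s^{g_2}\}$ to be a uniform dominating set is exactly the event $M^{g_1}\cap M^{g_2}\neq 1$, i.e.\ the base-two probability for $G$ acting on $G/M$) is precisely the framework you set up, so your overall strategy is the right one. The genuine problem is your quantitative analysis of $\PSL_2(q)$. A semisimple element $x$ of large order dividing $(q\mp1)/2$ meets $M=N_G(\<s\>)$, a dihedral group of order $q\mp1$, in a bounded set (its conjugates in the cyclic part), so $\frac{|x^G\cap M|}{|x^G|}$ has order $q^{-2}$, not $q^{-1}$, and the total contribution of all such elements to $\sum_x Q(x,s)^2$ is $O(1/q)$: the union bound does not diverge. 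What actually produces the limit $\frac12$ is the single class of involutions, for which $|x^G\cap M|\approx\frac{q}{2}$ and $|x^G|\approx\frac{q^2}{2}$, so the fixed point ratio is $\approx\frac1q$ and this one class contributes $\approx\frac12$. Since for distinct conjugates the intersection $M^{g_1}\cap M^{g_2}$ contains no element of order greater than $2$ (such elements lie in the tori, which meet trivially across distinct conjugates), the exact value reduces to counting shared involutions; for instance, for $q\equiv1\mod{4}$ a fixed involution lies in exactly $\frac{q-1}{2}$ conjugates of $M=D_{q+1}$ and distinct conjugates share at most one involution, which gives $1-P(G,s,2)=\frac{q-1}{2q}$ and hence $P(G,s,2)=\frac{q+1}{2q}>\frac12$ exactly. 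For $q\equiv3\mod{4}$ your claim that a common involution ``inverts both tori'' is false: the torus then has even order, $M$ has a central involution lying inside it, and two distinct conjugates can share a Klein four-group; this is exactly the bookkeeping that decides which side of $\frac12$ one lands on, so it cannot be treated as a lower-order correction. Note also that obtaining the limit $\frac12$ (rather than just an upper bound on $1-P$) requires the matching lower bound; your direct count of shared involutions does supply it, but the ``divergent union bound and its resolution'' framing misdescribes where the difficulty lies.

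A second, smaller issue: your assertion that ${\rm PSp}_{4m+2}(q)$, ${\rm P}\Omega^\pm_{4m}(q)$, ${\rm J}_3$, ${\rm He}$, ${\rm Co}_1$, ${\rm HN}$ are excluded because $\sum_x Q(x,s)^2$ converges to a constant strictly between $0$ and $1$, and that for them ``one records'' that $P(G)$ is bounded away from $0$ and $1$, is an unsupported guess. These are exactly the asterisked cases of Theorem~\ref{thm:udn_two}, where the existence and behaviour of a witnessing class (indeed whether $\gamma_u(G)=2$ at all) is not determined; the theorem excludes them by hypothesis, so nothing needs to be proved about them, and nothing of the kind you describe is known to hold. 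Finally, for the generic families, requiring $\frac{|x^G\cap H|}{|x^G|}=o(|x^G|^{-1/2})$ termwise is not by itself enough to force $\sum_x Q(x,s)^2\to0$, since the number of conjugacy classes grows; the summation over classes has to be controlled as in \cite{ref:BurnessHarper19,ref:BurnessHarper20}, which is part of the bookkeeping you correctly flag but underestimate.
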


\vspace{0.5\baselineskip}

\textbf{Methods. Bases of permutation groups. } 
Let us now discuss the methods used in \cite{ref:BurnessHarper19,ref:BurnessHarper20} to bound $\gamma_u(G)$. Here there is a very pleasing connection with an entirely different topic in permutation group theory: bases. For a group $G$ acting faithfully on a set $\Omega$, a subset $B \subseteq \Omega$ is a \emph{base} if the pointwise stabiliser $G_{(B)}$ is trivial. Since $G$ acts faithfully, the entire domain $\Omega$ is a base, so we naturally ask for the smallest size of a base, which we call the \emph{base size} $b(G,\Omega)$. To turn this combinatorial notion into an algebraic one, we observe that when $G$ acts on $G/H$, a subset $\{ Hg_1, \dots, Hg_c \}$ is a base if and only if $\cap_{i=1}^c H^{g_i} = 1$, so $b(G,G/H)$ is the smallest number of  conjugates of $H$ whose intersection is trivial. 

Bases have been studied for over a century, and the base size has been at the centre of several recently proved conjectures, such as Pyber's conjecture that there is a constant $c$ such that $\frac{\log|G|}{\log|\Omega|} \leq b(G, \Omega) \leq c \frac{\log|G|}{\log|\Omega|}$ for all primitive groups $G \leq \Sym(\Omega)$ (see \cite{ref:DuyanHalasiMaroti18}), and Cameron's conjecture that $b(G, \Omega) \leq 7$ for nonstandard primitive almost simple groups $G \leq \Sym(\Omega)$ (see \cite{ref:BurnessLiebeckShalev09}). There is an ambitious ongoing programme of work, initiated by Saxl, to provide a complete classification of the primitive groups $G \leq \Sym(\Omega)$ with $b(G,\Omega) = 2$. There are numerous partial results in this direction, and we give just one, as we will use it below. Burness and Thomas \cite{ref:BurnessThomas} proved that if $G$ is a simple group of Lie type and $T$ is a maximal torus, then $b(G,G/N_G(T)) = 2$ apart from a few known low rank exceptions.

The following result is the bridge that connects bases with uniform domination (see \cite[Corollaries~2.2 \&~2.3]{ref:BurnessHarper19}).

\begin{lemma} \label{lem:udn_bases}
Let $G$ be a finite group and let $s \in G$. 
\begin{enumerate}
\item Assume that $\M(G,s) = \{ H \}$ and $H$ is corefree. Then the smallest uniform dominating set $S \subseteq s^G$ satisfies $|S|=b(G,G/H)$.
\item Assume that $H \in \M(G,s)$ is corefree. Then every uniform dominating set $S \subseteq s^G$ satisfies $|S| \geq b(G,G/H)$.
\end{enumerate}
\end{lemma}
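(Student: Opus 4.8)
The plan is to reduce the uniform‑domination condition to a statement about intersections of conjugates of $H$, which is precisely the language of bases. The starting point is the elementary observation that, for $g \in G$ and nontrivial $x \in G$, the elements $x$ and $s^g$ fail to generate $G$ exactly when they lie together in some maximal subgroup $M$; as $s^g \in M$, we have $M \in \M(G,s^g) = \M(G,s)^g$. Consequently, when $\M(G,s) = \{H\}$ we get $\langle x, s^g \rangle = G$ if and only if $x \notin H^g$, while in general, whenever $H \in \M(G,s)$, the containment $x \in H^g$ already forces $\langle x, s^g \rangle \neq G$. I would also record that $H^g$ is the stabiliser of the coset $Hg$ under the action of $G$ on $G/H$, which is faithful because $H$ is corefree, so $\bigcap_i H^{g_i} = 1$ is exactly the assertion that $Hg_1, \dots, Hg_c$ is a base.

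For part (ii), take any uniform dominating set $S = \{s^{g_1}, \dots, s^{g_c}\} \subseteq s^G$. If some nontrivial $x$ lay in $\bigcap_{i=1}^c H^{g_i}$, then $\langle x, s^{g_i} \rangle \neq G$ for all $i$, contradicting that $S$ is a uniform dominating set; hence $\bigcap_{i=1}^c H^{g_i} = 1$, so $Hg_1, \dots, Hg_c$ form a base and $b(G,G/H) \leq c = |S|$.

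For part (i), the bound $|S| \geq b(G,G/H)$ is immediate from (ii), since $\M(G,s) = \{H\}$ in particular means $H \in \M(G,s)$. For the reverse inequality I would take a base $Hg_1, \dots, Hg_b$ of minimal size $b = b(G,G/H)$, so that $\bigcap_{i=1}^b H^{g_i} = 1$, and set $S = \{ s^{g_1}, \dots, s^{g_b} \} \subseteq s^G$. Given a nontrivial $x \in G$, since $\bigcap_{i=1}^b H^{g_i} = 1$ there is some $i$ with $x \notin H^{g_i}$, and then $\langle x, s^{g_i} \rangle = G$ because $\M(G,s) = \{H\}$; thus $S$ is a uniform dominating set. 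Finally, $|S| = b$: if $s^{g_i} = s^{g_j}$ with $i \neq j$, then $\{H^{g_i}\} = \M(G, s^{g_i}) = \M(G, s^{g_j}) = \{H^{g_j}\}$, so deleting $Hg_j$ would leave a base of size $b-1$, contradicting minimality. Therefore the smallest uniform dominating set inside $s^G$ has size exactly $b(G,G/H)$.

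The argument is short; the main obstacle is simply the bookkeeping in part (i) needed to move between conjugates $s^{g_i}$, their maximal overgroups $H^{g_i}$, and the cosets $Hg_i$ without the count changing. This is resolved by the uniqueness hypothesis $\M(G,s) = \{H\}$, which makes the assignment $s^g \mapsto H^g$ well defined on $s^G$; and one should keep in mind that the corefree hypothesis is exactly what guarantees $G \to \Sym(G/H)$ is faithful, so that $b(G,G/H)$ really is the base size rather than the least number of conjugates of $H$ meeting in the core.
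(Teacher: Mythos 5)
Your proof is correct and follows essentially the same route as the paper: translate generation with conjugates of $s$ into membership in conjugates of $H$ (using $\M(G,s)=\{H\}$ for the equivalence in (i) and the one-way implication for (ii)), and identify trivial intersections of conjugates of $H$ with bases for the faithful action on $G/H$. Your extra check that the conjugates $s^{g_i}$ arising from a minimal base are pairwise distinct just makes explicit a counting point the paper subsumes under ``the result follows.''
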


\begin{proof}
For (i), note that $x \in H$ if and only if $\< x, s \> \neq G$. Hence, $\{ s^{g_1}, \dots, s^{g_c} \}$ is a uniform dominating set if and only if $\bigcap_{i=1}^{c} H^{g_i} = 1$, or said otherwise, if and only if $\{ g_1, \dots, g_c \}$ is a base for $G$ acting on $G/H$. The result follows.

For (ii), if $x \in H$, then $\< x, s \> \neq G$. Therefore, if $\{ s^{g_1}, \dots, s^{g_c} \}$ is a uniform dominating set, then $\bigcap_{i=1}^{c} H^{g_i} = 1$, so $\{ g_1, \dots, g_c \}$ is a base for $G$ acting on $G/H$ and, consequently, $c \geq b(G, G/H)$.
\end{proof}

Let us explain how Lemma~\ref{lem:udn_bases} applies. Part~(i) gives an upper bound: if we can find $s \in G$ such that $\M(G,s) = \{H\}$ and $b(G,G/H) \leq c$, then $\gamma_u(G) \leq c$. Part~(ii) gives a lower bound: if we can show that for all $s \in G$ there exists $H \in \M(G,s)$ with $b(G,G/H) \geq c$, then $\gamma_u(G) \geq c$. We give two examples to show how we do this in practice.

\begin{example} \label{ex:udn_e8}
Let $G = E_8(q)$ and let $s$ generate a cyclic maximal torus of order $\Phi_{30}(q) = q^8+q^7-q^5-q^4-q^3+q+1$. As noted in Example~\ref{ex:spread_e8}, $\M(G,s) = \{ H \}$ where $H$ is the normaliser of the torus $\<s\>$. Now, applying Burness and Thomas' result \cite[Theorem~1]{ref:BurnessThomas} mentioned above, we see that $b(G,G/H)=2$, so Lemma~\ref{lem:udn_bases} implies that $\gamma_u(G) = 2$.
\end{example}

\begin{example} \label{ex:udn_an}
Let $n > 6$ be even and let $G = A_n$. We will give upper and lower bounds on $\gamma_u(G)$ via Lemma~\ref{lem:udn_bases}.

Seeking an upper bound on $\gamma_u(G)$, let $s = (1 \, 2 \, \dots \, l)(l+1 \, l+2 \, \dots \, n)$ where $l \in \{\frac{n}{2}-1, \frac{n}{2}-2\}$ is odd. As we showed in Example~\ref{ex:alternating}, $\M(G,s) = \{H\}$ where $H \cong (\Sm{l} \times \Sm{n-l}) \cap A_n$. The action of $A_n$ on $A_n/H$ is just the action of $A_n$ on the set of $l$-subsets of $\{1, 2, \dots, n\}$. The base size of this action was studied by Halasi, and by \cite[Theorem~4.2]{ref:Halasi12}, we have $b(G,G/H) \leq \left\lceil \log_{\lceil n/l \rceil} n \right\rceil \cdot (\lceil n/l \rceil - 1) \leq 2 \log_2{n}$. Applying Lemma~\ref{lem:udn_bases}(i) gives $\gamma_u(G) \leq 2\log_2{n}$.

Turning to a lower bound, note that every element of $G$ is contained in a subgroup $K$ of type $(\Sm{k} \times \Sm{n-k}) \cap A_n$ for some $0 < k < n$. By \cite[Theorem~3.1]{ref:Halasi12}, we have $b(G, G/K) \geq \log_2{n}$. Applying Lemma~\ref{lem:udn_bases}(ii) gives $\gamma_u(G) \geq \log_2{n}$.
\end{example}

We now address the general case where $s$ is not contained in a unique maximal subgroup of $G$. In the spirit of how uniform spread was studied, a probabilistic approach is adopted. Write $Q(G,s,c)$ for the probability that a random $c$-tuple of elements of $s^G$ does not give a uniform dominating set of $G$ and write $\mathcal{P}(G)$ for the set of prime order elements of $G$. The main lemma is \cite[Lemma~2.5]{ref:BurnessHarper19}.

\begin{lemma} \label{lem:udn_prob}
Let $G$ be a finite group, let $s \in G$ and let $c$ be a positive integer.
\begin{enumerate}
\item For all positive integers $c$, we have
\[
Q(G,s,c) \leq \sum_{x \in \mathcal{P}(G)} \left(\sum_{H \in \M(G,s)}\frac{|x^G \cap H|}{|x^G|}\right)^c.
\]
\item For a positive integer $c$, if $Q(G,s,c) < 1$, then $\gamma_u(G) \leq c$.
\end{enumerate}
\end{lemma}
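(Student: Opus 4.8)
The statement to prove is Lemma~\ref{lem:udn_prob}, which bounds $Q(G,s,c)$ by a sum over prime order elements and deduces $\gamma_u(G) \leq c$ when that bound is below $1$. This is a direct analogue of Lemma~\ref{lem:spread}, so I would follow essentially the same strategy, adapted to $c$-tuples.

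\textbf{Proof proposal.} For part~(i), the plan is to union-bound over the "bad" tuples. A $c$-tuple $(s^{g_1}, \dots, s^{g_c}) \in (s^G)^c$ fails to be a uniform dominating set precisely when there exists some nontrivial $x \in G$ with $\langle x, s^{g_i} \rangle \neq G$ for every $i$; and since any nontrivial $x$ powers to an element of prime order that lies in every maximal subgroup containing $x$, it suffices to quantify over $x \in \mathcal{P}(G)$. For a fixed such $x$, the event $\langle x, s^{g_i}\rangle \neq G$ is exactly the event $x \in H^{g_i}$ for some $H \in \M(G,s)$ (using that $\M(G, s^{g_i})$ is the conjugate of $\M(G,s)$), so as in Lemma~\ref{lem:spread}(i) the probability of this for a single random conjugate is at most $\sum_{H \in \M(G,s)} |x^G \cap H|/|x^G|$. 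The $c$ conjugates $s^{g_1}, \dots, s^{g_c}$ are chosen independently, so the probability that $x$ is "missed" by all $c$ of them is at most the $c$-th power of that quantity. Summing this bound over all $x \in \mathcal{P}(G)$ gives the displayed inequality for $Q(G,s,c)$.

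For part~(ii), if $Q(G,s,c) < 1$ then with positive probability a random $c$-tuple of conjugates of $s$ is a uniform dominating set, so in particular one exists; since it consists of $c$ conjugate elements, $\gamma_u(G) \leq c$ by definition.

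\textbf{Main obstacle.} The only genuinely delicate point is the reduction to prime order elements in the union bound: one must check that it is enough to rule out the bad event for $x$ of prime order, i.e. that if every prime order $x$ is generated with by some $s^{g_i}$ then so is every nontrivial $x$. This holds because if $1 \neq x \in G$ has a power $z$ of prime order, then any maximal subgroup containing $x$ contains $z$, so $\langle z, s^{g_i} \rangle = G$ forces $\langle x, s^{g_i}\rangle = G$; thus a tuple that dominates all prime order elements dominates all nontrivial elements. Beyond this, the argument is a routine union bound combined with the independence of the $c$ chosen conjugates, exactly parallelling the proof of Lemma~\ref{lem:spread}.
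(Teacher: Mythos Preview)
Your proposal is correct and follows essentially the same approach as the paper: union-bound over prime order elements $x \in \mathcal{P}(G)$, use independence of the $c$ random conjugates to obtain $Q(G,s,c) \leq \sum_{x \in \mathcal{P}(G)} Q(x,s)^c$, and then apply Lemma~\ref{lem:spread}(i) to bound each $Q(x,s)$. The paper's proof is more terse but structurally identical, including the reduction to prime order elements you flagged.
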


\begin{proof}
Part~(ii) is immediate. For part~(i), $\{s^{g_1}, \dots, s^{g_c}\}$ is not a uniform dominating set of $G$ if and only if there exists a prime order element $x \in G$ such that $\< x, s^{g_i} \> \neq G$ for all $1 \leq i \leq c$. Since $Q(x,s)$ is the probability that $x$ does not generate $G$ with a random conjugate of $s$ (see \eqref{eq:q}), this implies that $Q(G,s,c)  \leq \sum_{x \in \mathcal{P}(G)}^{} Q(x,s)^c$. The result follows from Lemma~\ref{lem:spread}(i).
\end{proof}

Could Lemma~\ref{lem:udn_prob} yield a better bound than Lemma~\ref{lem:udn_bases} when $s$ satisfies $\M(G,s) = \{H\}$? In this case, $Q(G,s,c)$ is nothing other than the probability that a random $c$-tuple of elements of $G/H$ form a base and $\sum_{x \in \mathcal{P}(G)} \left(\frac{|x^G \cap H|}{|x^G|}\right)^c$ is the upper bound for $Q(G,s,c)$ used, first by Liebeck and Shalev \cite{ref:LiebeckShalev99} and then by numerous others since, to obtain upper bounds on the base size $b(G,G/H)$. Therefore, Lemma~\ref{lem:udn_prob} has nothing new to offer in this special case.

We conclude with an example of Lemma~\ref{lem:udn_prob} in action. This establishes a (typical) special case of Theorem~\ref{thm:udn}(ii).

\begin{example} \label{ex:udn_prob}
Let $n \geq 10$ be even and let $G = \PSL_n(q)$. We proceed similarly to Example~\ref{ex:udn_an}, by fixing odd $l \in \{\frac{n}{2}-1, \frac{n}{2}-2\}$ and then letting $s$ lift to a block diagonal matrix \scalebox{0.75}{$\left( \begin{array}{cc} A & 0 \\ 0 & B \end{array} \right)$} where $A \in \mathrm{SL}_l(q)$ and $B \in \mathrm{SL}_{n-l}(q)$ are irreducible. The order of $s$ is divisible by a \emph{primitive prime divisor} of $q^{n-l}-1$ (a prime divisor coprime to $q^k-1$ for each $1 \leq k < n-l$). Using the framework of Aschbacher's theorem on the subgroup structure of classical groups \cite{ref:Aschbacher84}, Guralnick, Penttila, Praeger and Saxl, classify the subgroups of $\GL_n(q)$ that contain an element whose order is a primitive prime divisor of $q^m-1$ when $m > \frac{n}{2}$ \cite{ref:GuralnickPenttilaPraegerSaxl97}. With this we deduce that $\M(G,s) = \{ G_U, G_V \}$ where $U$ and $V$ are the obviously stabilised subspaces of dimension $l$ and $n-l$, respectively. The fixed point ratio for classical groups acting on the set of $k$-subspaces of their natural module was studied by Guralnick and Kantor, and by \cite[Proposition~3.1]{ref:GuralnickKantor00}, if $G = \PSL_n(q)$ and $H$ is the stabiliser of a $k$-subspace of $\F_q^n$, then $\frac{|x^G \cap H|}{|x^G|} < \frac{2}{q^k}$. Applying Lemma~\ref{lem:udn_prob} gives $\gamma_u(G) \leq 2n+15$ since
\[
Q(G,s,2n+15) \leq \sum_{x \in \mathcal{P}(G)} \left(\sum_{H \in \M(G,s)}\frac{|x^G \cap H|}{|x^G|}\right)^{2n+15} \!\!\leq q^{n^2-1} \left( 2 \cdot \frac{2}{q^{\frac{n}{2}-2}} \right)^{2n+15} \!\!< 1.
\]
\end{example}

\subsection{The spread of a finite group} \label{ss:finite_bgh}

We now look beyond finite simple groups and ask the general question: for which finite groups $G$ is every nontrivial element contained in a generating pair? Brenner and Wiegold's original 1975 paper gives a comprehensive answer for finite soluble groups (see \cite[Theorem~2.01]{ref:BrennerWiegold75} for even more detail).

\begin{theorem} \label{thm:brenner_wiegold}
Let $G$ be a finite soluble group. The following are equivalent:
\begin{enumerate}
\item $s(G) \geq 1$
\item $s(G) \geq 2$
\item every proper quotient of $G$ is cyclic.
\end{enumerate}
\end{theorem}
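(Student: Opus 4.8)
The equivalence to be proved is for a finite soluble group $G$: $s(G) \geq 1 \iff s(G) \geq 2 \iff$ every proper quotient of $G$ is cyclic. The implication $(ii) \Rightarrow (i)$ is trivial, and $(i) \Rightarrow (iii)$ is easy: if $N \trianglelefteq G$ with $G/N$ noncyclic, then for any $1 \neq x \in N$ and any $y \in G$, the subgroup $\langle x, y\rangle$ projects onto $\langle \bar y \rangle \neq G/N$, so $\langle x,y\rangle \neq G$, forcing $N = 1$ whenever $s(G) \geq 1$; hence every proper quotient is cyclic. (One also needs $G$ itself noncyclic for the statement to be nonvacuous, but if $G$ is cyclic all three conditions hold trivially.) So the real content is $(iii) \Rightarrow (ii)$: assuming every proper quotient of $G$ is cyclic, show $s(G) \geq 2$, i.e. for any two nontrivial $x_1, x_2 \in G$ there is $y$ generating with both.

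\textbf{Main argument for $(iii) \Rightarrow (ii)$.} Assume every proper quotient of $G$ is cyclic and $G$ is noncyclic. First I would handle the Frattini quotient: let $\Phi = \Phi(G)$. Since elements generate $G$ iff their images generate $G/\Phi$, and nontrivial elements of $G$ have nontrivial image in $G/\Phi$ only if they lie outside $\Phi$ — but elements of $\Phi$ are non-generators, so $x \in \Phi$ can never lie in a generating pair; hence $s(G) \geq 1$ already forces $\Phi = 1$. So we may assume $\Phi(G) = 1$. Now a finite soluble group with trivial Frattini subgroup all of whose proper quotients are cyclic has a very restricted structure: writing $N$ for the (unique) minimal normal subgroup forced by the hypothesis that proper quotients are cyclic — actually $G/N$ cyclic for each minimal normal $N$, and with $\Phi(G)=1$ one shows $G$ has a unique minimal normal subgroup $N$, which is elementary abelian, self-centralizing, and complemented: $G = N \rtimes C$ with $C$ cyclic acting faithfully and irreducibly on $N = \F_p^d$. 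This is the standard reduction (primitive soluble groups). The task is then concrete: given $1 \neq x_1, x_2 \in G = N \rtimes C$, find $y$ with $\langle x_i, y \rangle = G$ for $i = 1,2$.

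\textbf{The combinatorial core.} Choosing $y$ to be a suitable element projecting to a generator of $C$ (so $\langle x_i, y\rangle$ surjects onto $C$, hence $\langle x_i, y \rangle = N' \rtimes C$ for some $C$-submodule $N' \leq N$, and since $N$ is irreducible $\langle x_i, y\rangle = G$ unless $N' = 0$, i.e. unless $x_i$ and $y$ both lie in a common complement to $N$), the problem reduces to: the complements to $N$ in $G$ form (since $\Phi = 1$, $H^1(C,N)$ governs this) a single orbit or an affine space, and one must pick a complement-representative $y$ avoiding, for each $i$, the "bad" set of complements containing a conjugate of the relevant part of $x_i$. A counting argument — the number of complements containing a fixed nontrivial element, or containing $x_i$'s projection issue, is a strict fraction of all complements — gives that two excluded families cannot cover everything, provided $|N| = p^d$ is large enough relative to small cases; the genuinely small cases ($p^d$ tiny, $C$ of order $p^d - 1$ or a divisor) are checked directly. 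The case $x_i \in N$ needs separate care: then $\langle x_i, y\rangle \supseteq \langle x_i^C\rangle = N$ by irreducibility, so any $y$ projecting to a generator of $C$ works for that $x_i$, which only helps.

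\textbf{Expected main obstacle.} The hard part is the uniform complement-avoidance count: showing that for \emph{every} pair $x_1, x_2$ the family of complements to be avoided is a proper subset, with no exceptional small configuration actually failing. Concretely one must bound, for a nontrivial $x \in G = N\rtimes C$, the proportion of conjugacy-class representatives $y$ (lying over a generator of $C$) such that $\langle x, y \rangle$ misses $N$; this is exactly a fixed-point-ratio / cohomological estimate in the faithful irreducible module $N$, and it degrades when $\dim_{\F_p} N$ is small. So the proof splits into a generic regime settled by the estimate $|C_N(c)| \leq |N|^{1/2}$-type bounds for a generator $c$ of $C$ (forcing the avoided fraction below $1/2$, which is what is needed for spread $2$, not just spread $1$), and a finite list of small $(p,d,|C|)$ handled by inspection. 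Brenner and Wiegold's original treatment organizes this via the structure of primitive soluble groups and an explicit element count; I would follow that route, with the fixed-point-ratio phrasing of Lemma~\ref{lem:spread} making the "$s(G)\geq 2$ rather than just $\geq 1$" step transparent once the avoided fraction is shown to be less than $1/2$.
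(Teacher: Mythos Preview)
The paper does not actually prove this theorem; it is quoted from Brenner and Wiegold's original paper (as \cite[Theorem~2.01]{ref:BrennerWiegold75}) with no argument supplied, so there is no in-paper proof to compare against. Your outline is along the right lines for the hard direction (iii)$\Rightarrow$(ii), but there are a few genuine wrinkles.

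First, your Frattini reduction is circular as written: you justify $\Phi(G)=1$ by saying ``$s(G)\geq 1$ already forces $\Phi=1$'', but in the direction (iii)$\Rightarrow$(ii) you do not yet have $s(G)\geq 1$. The correct one-line argument is that if $\Phi(G)\neq 1$ then $G/\Phi(G)$ is a proper quotient, hence cyclic by (iii), hence $G$ is cyclic. Second, your claim that $G$ has a \emph{unique} minimal normal subgroup fails in the abelian case: under (iii) a noncyclic abelian $G$ is forced to be $C_p\times C_p$, which has $p+1$ minimal (normal) subgroups. This case must be treated separately, and is a two-line count.

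Finally, in the nonabelian case $G=N\rtimes C$ you are making the endgame harder than it is. Because $C$ is cyclic and acts faithfully and irreducibly on $N$, the fixed space of any nontrivial $c\in C$ is a $C$-submodule, hence zero; so $G$ is a Frobenius group and every element of $G\setminus N$ lies in exactly one complement to $N$. Thus for $x_i\notin N$ the ``bad'' $y$ (over a fixed generator of $C$) form a $1/|N|$ fraction, not merely something below $1/2$ via a $|C_N(c)|\leq |N|^{1/2}$ bound, while $x_i\in N$ imposes no constraint at all. Since faithfulness gives $|C|\mid p^d-1$ with $|C|\geq 2$, we have $|N|=p^d\geq 3$, so two bad $1/|N|$-fractions never cover everything and there are no residual small cases to inspect by hand.
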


The equivalence of (i) and (ii) shows that $s(G)=1$ for no finite soluble groups $G$, so Brenner and Wiegold asked the following question \cite[Problem~1.04]{ref:BrennerWiegold75}.

\begin{question} \label{que:brenner_wiegold}
Which finite groups $G$ satisfy $s(G) = 1$? In particular, are there perhaps only finitely many such groups?
\end{question}

The condition in (iii) is necessary for every nontrivial element of $G$ to be contained in a generating pair, and this is true for an arbitrary group $G$. To see this, assume that every nontrivial element of $G$ is contained in a generating pair. Let $1 \neq N \leqn G$ and let $1 \neq n \in N$. Then there exists $g \in G$ such that $G = \< n, g \>$, so $G/N = \<Nn, Ng\> = \<Ng\>$, which is cyclic. In 2008, Breuer, Guralnick and Kantor conjectured that this condition is also sufficient for all finite groups \cite[Conjecture~1.8]{ref:BreuerGuralnickKantor08}.

\begin{conjecture} \label{con:breuer_guralnick_kantor}
Let $G$ be a finite group. Then $s(G) \geq 1$ if and only if every proper quotient of $G$ is cyclic.
\end{conjecture}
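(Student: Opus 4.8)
The forward implication of the conjecture --- that every proper quotient of $G$ being cyclic is \emph{necessary} --- is established for arbitrary groups in the paragraph preceding the statement, so the plan is to prove the converse for finite $G$, and in fact, following the pattern of Theorem~\ref{thm:brenner_wiegold}, I would aim for the stronger conclusion $s(G) \geq 2$. The first step is a structural reduction. If $G$ is cyclic or $G \cong C_p \times C_p$, then $s(G) \geq 1$ by a direct check, so assume neither holds; then two distinct minimal normal subgroups $N_1 \neq N_2$ would intersect trivially, so $G$ would embed in the product of the cyclic groups $G/N_1$ and $G/N_2$, forcing $G$ to be abelian and hence cyclic or $C_p \times C_p$, a contradiction. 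Thus $G$ has a unique minimal normal subgroup $N = \mathrm{soc}(G)$, and $G/N$ is cyclic. The analysis now divides according to whether $N$ is nonabelian or abelian.

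\textbf{Nonabelian socle.} Here $N = T_1 \times \cdots \times T_k$ with each $T_i \cong T$ nonabelian simple, $C_G(N)=1$, and a generator of the cyclic group $G/N$ permutes the $T_i$ as a single $k$-cycle. The heart of the matter is the case $k=1$, so that $G$ is almost simple with $G = \<T,g\>$: when $G = T$ this is exactly Theorem~\ref{thm:breuer_guralnick_kantor}, and for $T < G$ one runs the probabilistic machine of Lemma~\ref{lem:spread}, choosing $s \in G$ whose set $\M(G,s)$ of maximal overgroups is as restricted as possible (adapting the element used for $T$) and bounding $\frac{|x^G \cap H|}{|x^G|}$ for every prime-order $x \in G$ --- now crucially including field, graph and graph-field automorphisms --- via the Liebeck--Saxl bound \eqref{eq:fpr} and its refinements, so that $Q(x,s) < \frac{1}{2}$. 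For $k > 1$ one observes that $W := N_G(T_1)/C_G(T_1)$ is again almost simple with cyclic outer part, and applies a reduction for wreath-type groups to deduce $s(G) \geq 2$ from the corresponding bound for $W$ together with a suitable choice of component in the base group $T_1 \times \cdots \times T_k$.

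\textbf{Abelian socle.} Now $N = V$ is an elementary abelian $p$-group which, being minimal normal, is irreducible as an $\F_p[G]$-module, and $G/V$ is cyclic. After disposing of the non-split extensions and the case $C_G(V) > V$ (both of which force $p$ to divide $|G/V|$, and are handled by ad hoc arguments), we may assume $G$ is a split extension of $V$ by a cyclic group $H = \<h\>$ acting faithfully and irreducibly on $V$. For $x \in V \setminus 0$ the element $y = h$ works, since $\<x,h\>$ contains $x^{h^i}$ for all $i$, hence the $\F_p[H]$-submodule generated by $x$, which by irreducibility is all of $V$, and therefore $\<x,h\> = G$. For $x = vh^j \notin V$ one instead picks $y = wh$ for a suitable $w \in V$; then $\<x,y\>$ surjects onto $H$, so $\<x,y\> \cap V$ is an $H$-submodule of $V$ and hence $0$ or $V$, and ruling out the first possibility --- that $\<x,y\>$ is a complement to $V$ --- comes down to choosing $w$ so as to avoid the boundedly many complements passing through $x$, which a counting argument shows is possible unless $V$ and $H$ are small, the remaining configurations being checked directly.

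\textbf{The main obstacle.} The dominant difficulty is the almost simple case with $T < G$: this is a full CFSG case analysis, and within it the classical groups of unbounded rank are the crux, demanding the sharpest available fixed-point-ratio estimates for the relevant maximal subgroups together with techniques --- Shintani descent and bounds on centralisers of automorphisms --- tailored to exactly the outer elements that the simple-group input of Theorem~\ref{thm:breuer_guralnick_kantor} leaves untreated. The abelian-socle case contributes an independent, CFSG-free obstacle in the form of the combinatorial bookkeeping with complements, but it is the sheer breadth of the almost simple analysis that accounts for the long gap between the statement of the conjecture and its resolution.
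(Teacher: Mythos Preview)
Your outline is correct and follows essentially the same route as the paper: reduce to a unique minimal normal subgroup, use a wreath-type reduction (the paper's Theorem~\ref{thm:bgh_reduction}) to pass from $k>1$ to the almost simple case, and then resolve the almost simple case $G=\langle T,g\rangle$ by the probabilistic method with Shintani descent as the essential tool for handling cosets of field and graph-field automorphisms --- exactly the obstacle you flag. The one organisational difference is that the paper does not treat the abelian-socle case at all: it simply invokes Theorem~\ref{thm:brenner_wiegold} to dispose of all soluble $G$ at the outset, whereas you sketch a direct argument for the split irreducible affine case (which is in effect a re-proof of part of Brenner--Wiegold). Your version is self-contained but adds work; the paper's is shorter because that piece was already in the literature in 1975.
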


Completing a long line of research in this direction, both Question~\ref{que:brenner_wiegold} and Conjecture~\ref{con:breuer_guralnick_kantor} were settled by Burness, Guralnick and Harper in 2021 \cite{ref:BurnessGuralnickHarper21}.

\begin{theorem} \label{thm:burness_guralnick_harper}
Let $G$ be a finite group. Then $s(G) \geq 2$ if and only if every proper quotient of $G$ is cyclic.
\end{theorem}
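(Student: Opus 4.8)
The forward direction is routine and already noted in the excerpt: if $s(G)\geq 1$ then every proper quotient of $G$ is cyclic, since a generating pair for $G$ projects to a generating pair for any quotient, and a $2$-generated group with one of its generators trivial (the image of $1\neq n\in N\leqn G$) is cyclic. So the content is the converse: assuming every proper quotient of $G$ is cyclic, prove the strong statement $s(G)\geq 2$. I would organise the proof around a reduction to an almost simple "base case" via the structure of the minimal normal subgroups, exactly as one does for generation problems.

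\textbf{Step 1: Reduction to the case with a unique minimal normal subgroup.} First I would argue that the hypothesis "every proper quotient is cyclic" forces $G$ to have a very restricted normal structure. If $G$ has two distinct minimal normal subgroups $N_1,N_2$, then $G$ embeds in $G/N_1\times G/N_2$, a product of cyclic groups, hence $G$ is abelian; but an abelian group all of whose proper quotients are cyclic is itself cyclic (or $\Int$) — here finite and cyclic — and cyclic groups trivially have $s(G)=\infty$ in the sense that works. So we may assume $G$ has a unique minimal normal subgroup $N$, and $G/N$ is cyclic. Now split into the soluble case and the insoluble case: if $G$ is soluble this is Theorem~\ref{thm:brenner_wiegold}, so assume $N$ is a direct power $T^k$ of a nonabelian simple group $T$.

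\textbf{Step 2: The affine/diagonal-type analysis and the main obstacle.} With $N=T^k$ and $G/N$ cyclic, $G$ is contained in (a subgroup of) $\Aut(T)\wr \Sym(k)$ acting with the $k$ factors permuted, and $G$ induces a transitive cyclic group on the $k$ blocks. The heart of the matter is to produce, for any two nontrivial $x_1,x_2\in G$, a single $y$ with $\<x_1,y\>=\<x_2,y\>=G$. I would follow the probabilistic template of Lemma~\ref{lem:spread}: choose $y$ whose image generates $G/N$ cyclically and whose "simple-group part" is a uniform-spread witness $s$ for $T$ (using Theorem~\ref{thm:breuer_guralnick_kantor}, $u(T)\geq 2$), then estimate $Q(x_i,y)$ via fixed point ratios of the maximal subgroups of $G$ containing $y$. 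The maximal overgroups of such a $y$ come in two flavours — those containing $N$ (controlled because $G/N$ is cyclic, so there is essentially one, and it is avoided by a suitable $x_i\notin$ it) and those supplementing $N$ (the "novel" maximal subgroups, of product or diagonal type). \emph{The main obstacle, and where the real work of \cite{ref:BurnessGuralnickHarper21} lies, is precisely controlling these supplements}: one must show the fixed point ratios $|x^G\cap H|/|x^G|$ for $H$ a maximal subgroup supplementing $N$ are small enough that $\sum_{H\in\M(G,y)} |x^G\cap H|/|x^G| < \tfrac12$ for all prime-order $x$, which requires delicate estimates handling the diagonal-type subgroups and the interaction between the permutation action on blocks and the fixed points inside each $T$. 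The exceptional configurations where the naive probabilistic bound fails (mirroring $\Sp_{2m}(2)$, $\Omega_{2m+1}(q)$, and the $\frac32$-generated-but-barely simple groups of Theorem~\ref{thm:breuer_guralnick_kantor}) must be handled by hand, typically by a more careful choice of the conjugacy class of $y$ or a direct argument, and checking that even in those cases $s(G)\geq 2$ still holds.

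\textbf{Step 3: Assembling the bound.} Once $Q(x_i,y)<\tfrac12$ for prime-order $x_i$ is established for a well-chosen $y$, Lemma~\ref{lem:spread}(ii)-style reasoning gives a common $y$ for any two nontrivial $x_1,x_2$ (reducing to prime order by passing to suitable powers), hence $s(G)\geq u(G)\geq 2$, completing the converse. I would close by remarking that this simultaneously answers Question~\ref{que:brenner_wiegold} — there are \emph{no} finite groups with $s(G)=1$ — since the dichotomy $s(G)\in\{0\}\cup\{k:k\geq2\}$ drops out of the proof: the argument never produces a $y$ for one element without producing it for two.
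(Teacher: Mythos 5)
Your skeleton matches the true shape of the argument (easy forward direction; reduction to an insoluble group with unique minimal normal subgroup $N=T^k$ and cyclic top; probabilistic method via Lemma~\ref{lem:spread}), but it mislocates the heart of the proof and this leaves a genuine gap. Your only deep input is $u(T)\geq 2$ for the \emph{simple} group $T$ (Theorem~\ref{thm:breuer_guralnick_kantor}), and you propose to build $y$ by attaching a generator of $G/N$ to a uniform-spread witness for $T$. But any $y$ that generates $G$ together with an element of $N$ must lie in a coset of $N$ generating $G/N$, and the maximal overgroups and fixed point ratios of such coset elements are not controlled by knowledge of elements of $T$: already in the base case $k=1$, $G=\<T,a\>$ with $a\notin T$, the witness must be chosen inside an outer coset $Ta$, and determining its maximal overgroups is exactly where Shintani descent and the subsequent analysis enter. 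The actual proof runs through a reduction theorem (Theorem~\ref{thm:bgh_reduction}) that handles the wreath-product layer comparatively cheaply and reduces everything to the almost simple statement $u(\<T,a\>)\geq 2$ for \emph{every} almost simple group --- a theorem strictly stronger than Breuer--Guralnick--Kantor's, established over a long series of papers (Burness--Guest, Harper, Burness--Guralnick--Harper). Your sketch instead places the difficulty in the product/diagonal-type supplements of $N$ for $k\geq 2$, which in the real argument is the easier step, and treats the almost simple case as if it followed from $u(T)\geq 2$, which it does not.

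A second concrete gap is the case $T=A_6$, which the paper's reduction theorem explicitly excludes. Your conclusion ``$s(G)\geq u(G)\geq 2$'' is false in general: $u(\Sm{6})=0$, and the groups in Theorem~\ref{thm:burness_guralnick_harper_uniform}(ii) have $u(G)=1$, yet $s(G)\geq 2$ holds for all of them. So for these groups no single conjugacy class can serve as a uniform witness, and the proof of $s(G)\geq 2$ there must choose $y$ from different classes depending on the pair $x_1,x_2$ --- precisely contradicting your closing remark that the argument never produces a $y$ for one element without producing it uniformly for two. Finally, a small slip in Step 1: a finite abelian group all of whose proper quotients are cyclic need not be cyclic ($C_p\times C_p$ is a counterexample); this is harmless, since that case is soluble and covered by Theorem~\ref{thm:brenner_wiegold}, but the assertion as written is wrong.
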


\begin{corollary} \label{cor:burness_guralnick_harper}
No finite group $G$ satisfies $s(G)=1$.
\end{corollary}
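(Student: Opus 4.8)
The plan is to deduce the corollary immediately from Theorem~\ref{thm:burness_guralnick_harper} together with the elementary necessary condition already recorded in the paragraph preceding Theorem~\ref{thm:burness_guralnick_harper}. Suppose, for contradiction, that $G$ is a finite group with $s(G) = 1$. Since $s(G) \geq 1$, in particular every nontrivial element of $G$ is contained in a generating pair; as noted above (taking $1 \neq N \leqn G$, picking $1 \neq n \in N$ and $g \in G$ with $\< n, g\> = G$, so that $G/N = \<Ng\>$), this forces every proper quotient of $G$ to be cyclic. Now Theorem~\ref{thm:burness_guralnick_harper} applies: the condition ``every proper quotient of $G$ is cyclic'' is exactly the hypothesis under which that theorem gives $s(G) \geq 2$. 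This contradicts $s(G) = 1$, and the corollary follows.

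In fact it is cleanest to phrase the whole thing contrapositively via the dichotomy that Theorem~\ref{thm:burness_guralnick_harper} encodes. For any finite group $G$, either every proper quotient of $G$ is cyclic, in which case $s(G) \geq 2$ by Theorem~\ref{thm:burness_guralnick_harper}, or some proper quotient of $G$ is noncyclic, in which case $s(G) = 0$ by the necessary condition above (there is a nontrivial element lying in no generating pair). Either way $s(G) \neq 1$, which is the assertion of Corollary~\ref{cor:burness_guralnick_harper}.

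There is essentially no obstacle here: the content is entirely in Theorem~\ref{thm:burness_guralnick_harper}, and the corollary is the formal observation that the theorem leaves no gap between $s(G) = 0$ and $s(G) \geq 2$. The only point requiring any care is making sure the ``only if'' direction is handled by the correct fact — that a noncyclic proper quotient obstructs $\frac{3}{2}$-generation, hence $s(G) = 0$ rather than $s(G) = 1$ — but this is exactly the short argument already given in the text, so no new work is needed.
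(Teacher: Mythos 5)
Your argument is correct and is exactly the one the paper intends: the corollary is stated as an immediate consequence of Theorem~\ref{thm:burness_guralnick_harper}, using the elementary observation (given in the text before Conjecture~\ref{con:breuer_guralnick_kantor}) that $s(G)\geq 1$ forces every proper quotient to be cyclic, whence the theorem upgrades this to $s(G)\geq 2$. No difference in approach and no gap.
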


The next example (which is \cite[Remark~2.16]{ref:BurnessGuralnickHarper21}) shows that Theorem~\ref{thm:burness_guralnick_harper} does not hold if spread is replaced with uniform spread (recall Theorem~\ref{thm:spread_sym}).

\begin{example} \label{ex:spread_s6}
Let $G = \Sm{n}$ where $n \geq 6$ is even. Suppose that $u(G) > 0$ is witnessed by the class $s^G$. Since a conjugate of $s$ generates with $(1 \, 2 \, 3)$, $s$ must be an odd permutation. Since a conjugate of $s$ generates with $(1 \, 2)$, $s$ must have at most two cycles. Since $n$ is even, it follows that $s$ is an $n$-cycle. However, if $n=6$ and $a \in \Aut(G) \setminus G$, then $s^a \in (1 \, 2 \, 3)(4 \, 5)^G$ also witnesses $u(G) > 0$, which is a contradiction. Therefore, $u(\Sm{6})=0$.
\end{example}

However, Example~\ref{ex:spread_s6} is essentially the only obstacle to a result for uniform spread analogous to Theorem~\ref{thm:burness_guralnick_harper} on spread. Indeed, Burness, Guralnick and Harper gave the following complete description of the finite groups $G$ with $u(G) < 2$ \cite[Theorem~3]{ref:BurnessGuralnickHarper21}. (Note that the uniform spread of abelian groups $G$ is not interesting: $u(G) = \infty$ if $G$ is cyclic and $u(G) = 0$ otherwise.)

\begin{theorem} \label{thm:burness_guralnick_harper_uniform}
Let $G$ be a nonabelian finite group such that every proper quotient is cyclic. Then
\begin{enumerate}
\item $u(G) = 0$ if and only if $G = \Sm{6}$
\item $u(G) = 1$ if and only if the group $G$ has a unique minimal normal subgroup $N = T_1 \times \cdots \times T_k$ where $k \geq 2$ and where $T_i = A_6$ and $N_G(T_i)/C_G(T_i) = \Sm{6}$ for all $1 \leq i \leq k$.
\end{enumerate}
\end{theorem}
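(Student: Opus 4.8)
The plan is to prove Theorem~\ref{thm:burness_guralnick_harper_uniform} by reducing to the simple case via a quotient/normal-subgroup analysis and then importing the detailed information on $u(\Sm{6})$ and on almost simple groups with socle $A_6$. Throughout, let $G$ be a nonabelian finite group all of whose proper quotients are cyclic. The structure theory for such $G$ (which underpins Theorem~\ref{thm:burness_guralnick_harper}) says that $G$ has a unique minimal normal subgroup $N$, that $N = T_1 \times \cdots \times T_k$ for isomorphic simple groups $T_i$, that $G/N$ is cyclic, and that $C_G(N) = 1$ when $N$ is nonabelian (when $N$ is abelian, $G$ is soluble and Theorem~\ref{thm:brenner_wiegold} already forces $u(G) \geq 2$ or $G$ abelian, so the nonabelian-$N$ case is the crux). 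So we may assume $N = T^k$ with $T$ nonabelian simple and $G \leq \Aut(N) = \Aut(T) \wr \Sm{k}$, with $G$ permuting the factors transitively (else $G/N$ would not be cyclic-compatible) and $G/N$ cyclic.

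The key steps, in order. First, dispatch the case $k=1$: here $G$ is almost simple with socle $T$, and $u(G) \geq 2$ for all such $G$ except $\Sm{6}$ — this is the almost simple case that combines Theorem~\ref{thm:breuer_guralnick_kantor}, Theorem~\ref{thm:spread_sym}, and the probabilistic machinery of Lemma~\ref{lem:spread}; $u(\Sm{6})=0$ is exactly Example~\ref{ex:spread_s6}, giving part~(i) and the $k=1$ contribution. Second, treat $k \geq 2$. One shows that a witnessing element $s$ (if $u(G) \geq 1$) must project, in each coordinate quotient $N_G(T_i)/C_G(T_i) =: A_i \leq \Aut(T_i)$, to an element witnessing $u(A_i) \geq 1$ in a suitable sense — the point being that elements $x$ supported in a single factor $T_i$ must generate $G$ together with $s$, which forces the $T_i$-component of $s$ (together with the induced action) to generate $A_i$ "over" each nontrivial element of $T_i$. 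If some $A_i$ has $u(A_i) = 0$, i.e. $A_i = \Sm{6}$ with $T_i = A_6$, then as in Example~\ref{ex:spread_s6} the Aut-twisting of $s$ within that factor produces a second class that would also have to witness $u(G) \geq 1$, giving a contradiction and hence $u(G) \leq 1$; conversely, when every $A_i = \Sm{6}$ one checks that $u(G) = 1$ is actually attained by choosing $s$ to act as an $n$-cycle-type element in each factor. Third, rule out $u(G) = 1$ when not every $A_i$ equals $\Sm{6}$: if some $A_i$ has $u(A_i) \geq 2$ (equivalently $T_i \neq A_6$ or $A_i \neq \Sm{6}$), then a product-action/probabilistic argument — estimating, for prime-order $x \in G$, the probability over $y \in s^G$ that $\langle x, y\rangle \neq G$ by summing fixed-point ratios over $\M(G,s)$ as in Lemma~\ref{lem:spread}(i), using that the maximal overgroups of a well-chosen $s$ are controlled product-type and diagonal-type subgroups — yields $Q(x,s) < \tfrac12$, hence $u(G) \geq 2$. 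This gives part~(ii).

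The main obstacle I expect is the $k \geq 2$ analysis, and within it the bookkeeping of maximal subgroups of $G$ containing a cleverly chosen $s$: one needs $s$ whose components in the $T_i$ are "spread-witnessing" simultaneously, and must then understand which subgroups of $\Aut(T)\wr\Sm{k}$ lying above $N$ can contain $s$ — product subgroups $\prod H_i$, diagonal-type subgroups where several coordinates are identified, and subgroups not surjecting onto $G/N$. Controlling the diagonal-type overgroups is the delicate part, because two coordinates of $s$ could be "accidentally conjugate" and allow a diagonal subgroup to swallow $s$; the choice of $s$ must be rigid enough (e.g. using elements of distinct orders or incompatible cycle shapes in different-looking coordinates, or exploiting the transitive $G/N$-action to force compatibility constraints) to eliminate these. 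Once the overgroup structure is pinned down, the fixed-point-ratio bounds needed are essentially the classical ones (Liebeck–Saxl \eqref{eq:fpr}, and the bounds for actions on subspaces and on subsets used in Examples~\ref{ex:udn_prob} and~\ref{ex:udn_an}), so the probabilistic estimate closing out $u(G)\geq 2$ should be routine modulo the small-group exceptions, which must be checked directly — and among those small cases, $A_6$ and $\Sm{6}$ are precisely the ones that survive as genuine exceptions, explaining the shape of the theorem.
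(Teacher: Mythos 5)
Your overall skeleton (unique minimal normal subgroup $N = T^k$ with $G/N$ cyclic; the case $k=1$ is almost simple with $\Sm{6}$ as the sole exception; then $k \geq 2$) does match the outline in the paper, but the mechanism you propose for $k \geq 2$ has a genuine gap. The paper does not run a fixed-point-ratio estimate in $G$ itself: it proves a reduction theorem (Theorem~\ref{thm:bgh_reduction}) showing that $s = (a,1,\dots,1)\sigma$ witnesses $u(\<T^k,s\>) \geq 2$ provided $a$ witnesses $u(\<T,a\>) \geq 2$ \emph{and} the extra hypothesis $\<a\> \cap T \neq 1$ (with the condition on $|\<a\>\cap T|$ when $a$ is a square in $\Aut(T)$) holds. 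That extra hypothesis is precisely the device that deals with the diagonal-type obstructions you yourself flag as ``the delicate part'', and its presence shows that knowing $u(A_i) \geq 2$ alone is not enough: the almost simple case has to be established in the stronger form ``there is a witness $a$ with $\<a\>\cap T \neq 1$, etc.'', and this is where the bulk of the work (Shintani descent to access classes in outer cosets $Tg$, plus fixed point ratios inside almost simple groups) actually sits. Your alternative -- bounding $Q(x,s)$ by summing fixed point ratios over $\M(G,s)$ for the wreath-type group $G$ -- is unsupported as written: the relevant actions are on cosets of product- and diagonal-type subgroups of a group that is not almost simple, so the Liebeck--Saxl bound \eqref{eq:fpr} and the subspace/subset bounds you cite do not apply as stated, and for $x$ supported on a single factor it is not at all clear that the resulting sums are less than $\frac{1}{2}$. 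So ``$Q(x,s) < \frac{1}{2}$, hence $u(G) \geq 2$'' is an assertion, not an argument.

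The $A_6$ case is also not handled. Your upper-bound argument for $u(G) \leq 1$ when $k \geq 2$ and every $A_i = \Sm{6}$ is logically broken: concluding that the twisted class ``would also have to witness $u(G) \geq 1$'' is no contradiction, since $u(G) = 1$ for these groups and many classes do witness $u(G) \geq 1$. In Example~\ref{ex:spread_s6} the contradiction comes from first pinning down the putative witness (it must be a $6$-cycle, because it must generate with a transposition and with a $3$-cycle) and then noting that its image under the exceptional automorphism has three cycles and so cannot witness; for $k \geq 2$ you would need analogous concrete constraints played off against a hypothesis of $u(G) \geq 2$, together with a separate construction proving $u(G) \geq 1$, and neither is supplied (the survey explicitly sets $T = A_6$ aside as the complicated case, and the claim ``$u(G)=1$ is attained by an $n$-cycle-type element in each factor'' is just asserted). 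A smaller point: Theorem~\ref{thm:brenner_wiegold} concerns $s(G)$, not $u(G)$, so it does not by itself dispose of the soluble case (abelian $N$) of this theorem; that case also requires a uniform-spread argument.
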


Theorem~\ref{thm:burness_guralnick_harper_uniform} emphasises the anomalous behaviour of $\Sm{6}$: it is the only almost simple group $G$ where every proper quotient of $G$ is cyclic but $u(G) < 2$.

\vspace{0.5\baselineskip}
 
\textbf{Methods. A reduction theorem and Shintani descent. } We now outline the proof of Theorems~\ref{thm:burness_guralnick_harper} and~\ref{thm:burness_guralnick_harper_uniform} in \cite{ref:BurnessGuralnickHarper21}. We need to consider the finite groups $G$ all of whose proper quotients are cyclic. In light of Theorem~\ref{thm:brenner_wiegold}, we will assume that $G$ is insoluble, so $G$ has a unique minimal normal subgroup $T^k$ for a nonabelian simple group $T$, and we can assume that $G = \< T^k, s \>$ where $s = (a, 1, \dots, 1)\sigma \in \Aut(T^k)$ for $a \in \Aut(T)$ and $\sigma = (1 \, 2 \, \dots \, k) \in \Sm{k}$. Let us ignore $T = A_6$ due to the complications we have already seen in this case.

The first major step in the proof of Theorems~\ref{thm:burness_guralnick_harper} and~\ref{thm:burness_guralnick_harper_uniform} is the following reduction theorem \cite[Theorem~2.13]{ref:BurnessGuralnickHarper21}.

\begin{theorem} \label{thm:bgh_reduction}
Fix a nonabelian finite simple group $T$ and assume that $T \neq A_6$. Fix $s = (a, 1, \dots, 1)\sigma \in \Aut(T^k)$ with $a \in \Aut(T)$ and $\sigma = (1 \, 2 \, \dots \, k) \in \Sm{k}$. Then $s$ witnesses $u(\<T^k,s\>) \geq 2$ if the following hold:
\begin{enumerate}
\item $a$ witnesses $u(\< T, a \>) \geq 2$
\item $\< a \> \cap T \neq 1$, and if $a$ is square in $\Aut(T)$, then $|\<a\> \cap T|$ does not divide $4$.
\end{enumerate}
\end{theorem}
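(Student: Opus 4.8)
Write $G = \langle T^k, s\rangle$ where $s = (a,1,\dots,1)\sigma$, and note that the normal subgroup $T^k$ together with $s$ generates $G$, so $s$ acts on the $k$ factors as the $k$-cycle $\sigma$. The plan is to reduce the problem of generating $G$ with a conjugate of $s$ to a problem about generating $T$ with a conjugate of $a$, and then invoke hypothesis~(i). The key structural observation is the ``Shintani descent''-type correspondence: if $g = (g_1, \dots, g_k) \in T^k$, then the conjugate $s^g = (a,1,\dots,1)^g \sigma$ satisfies $(s^g)^k = (a^{g_1 \cdot g_2 \cdots g_k \cdot (\text{suitable twist})}, \dots)$ lying in the first factor (up to reindexing), so controlling the ``norm'' $g_1 g_2^{\sigma^{-1}} \cdots$ lets us realise an arbitrary conjugate $a^h$ of $a$ as essentially the $k$-th power of $s^g$. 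Concretely, I would set up the map sending $g \in T^k$ to the element of $T$ obtained from $(s^g)^k$, show it is surjective onto $a^T$-classes in the appropriate sense, and use this to translate the generation condition.

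\textbf{The core argument.} Fix nontrivial $x_1, x_2 \in G$; by Lemma~\ref{lem:spread}(ii) (its proof, not just the statement) it suffices to handle $x_1, x_2$ of prime order, and moreover one reduces to the case $x_i \in T^k \setminus 1$, since any element outside $T^k$ maps to a nontrivial element of the cyclic quotient $G/T^k$ and is easily dealt with, or by projecting to a single factor. Given $x_i = (x_{i,1}, \dots, x_{i,k})$, let $y_i \in T$ be a ``coordinate'' of $x_i$ that is nontrivial (some $x_{i,j} \neq 1$; replacing $s$ by a conjugate that cyclically shifts, we may assume $j$ is controlled). By hypothesis~(i), $a$ witnesses $u(\langle T,a\rangle) \geq 2$, so there is $h \in T$ with $\langle y_1, a^h\rangle \supseteq$ a generating set: more precisely $\langle x_{1,1}, a^h\rangle$ and $\langle x_{2,1}, a^h \rangle$ both project onto $\langle T, a\rangle$. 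The remaining task is to promote this to $\langle x_i, s^g\rangle = G$ for a suitable $g$. This is where hypothesis~(ii) enters: one needs $\langle s^g \rangle \cap T^k$ together with the conjugates of $x_i$ to fill up all $k$ factors, and the condition $\langle a \rangle \cap T \neq 1$ (with the non-divisibility-by-$4$ caveat when $a$ is a square in $\mathrm{Aut}(T)$) guarantees that $s^g$ already contributes a nontrivial element to the relevant factor so that the ``diagonal'' subgroup generated cannot be a proper subdirect product — this rules out the generic obstruction where $\langle x_i, s^g\rangle$ projects onto each $T$ but is a proper subdirect subgroup of $T^k$.

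\textbf{Ruling out subdirect subgroups.} The main obstacle, and the place where the full strength of~(ii) is used, is showing that $\langle x_i, s^g \rangle$ is not a proper subgroup of $G$ that nonetheless surjects onto $G/T^k$ and onto each coordinate $T$. Such a subgroup would have to be (the normaliser of) a ``diagonal-type'' subgroup permuted by $\sigma$. I would argue as follows: since $s^g$ induces the $k$-cycle $\sigma$, any overgroup of $s^g$ that meets $T^k$ in a $\sigma$-invariant subdirect subgroup $D$ forces $D$ to be a product of full diagonal subgroups over $\sigma$-orbits on blocks; because $\sigma$ is a single $k$-cycle, $D$ is itself a diagonal copy of $T$ (or $T^k$). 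If $D = T^k$ we are done; if $D$ is diagonal, then $s$ normalising it forces $a$ to be related to an inner automorphism in a way that, via the Shintani correspondence, translates into $\langle a\rangle \cap T$ being trivial — contradicting the first clause of~(ii). The square/divides-$4$ refinement handles the borderline case where $a^2 \in T$ but $a \notin T$ and $|\langle a\rangle \cap T|$ is small, which is exactly the numerical coincidence that would otherwise let a diagonal subgroup survive; here one checks the order obstruction directly. Throughout, the $A_6$ exclusion is what lets us quote~(i) cleanly (since $u(\Sm 6) = 0$) and avoids the exceptional Shintani behaviour. I expect the bookkeeping in the subdirect-subgroup analysis — tracking how $\sigma$, the conjugating element $g$, and $\langle a \rangle \cap T$ interact — to be the genuinely delicate part; the reduction to prime-order elements and the invocation of Lemma~\ref{lem:spread} are routine by comparison.
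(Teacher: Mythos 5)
This theorem is not proved in the survey (it is quoted from Burness--Guralnick--Harper), so your sketch has to stand on its own, and it has a genuine gap at precisely the point you declare routine. You dispose of elements $x \notin T^k$ as ``easily dealt with, or by projecting to a single factor'', and you then spend hypothesis~(ii) entirely on ruling out proper subdirect subgroups of $T^k$ arising from elements $x_i \in T^k$. This is backwards. Projection to a coordinate says nothing about an element whose $T^k$-part is trivial or which permutes the $k$ factors nontrivially, and it is exactly these elements -- prime order elements lying over nontrivial elements of the cyclic top group $G/T^k$ -- that make condition~(ii) necessary and that carry most of the difficulty. Your subdirect analysis in the third paragraph already presupposes that $\< x_i, s^g \>$ surjects onto each coordinate copy of $T$, but for $x \notin T^k$ you have given no argument for this; and the degenerate configurations that condition~(ii) is designed to exclude (for instance $\< x, s^g \>$ being a small metacyclic or dihedral-like group when $x$ and $s^g$ both lie over generators of $G/T^k$ and $s$ has small order modulo $T^k$) are never confronted. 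Note that $\< s \> \cap T^k \neq 1$ if and only if $\< a \> \cap T \neq 1$, which is the real reason the first clause of~(ii) appears: it controls the elements above the top group, not the diagonals.

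Your proposed mechanism for~(ii) against diagonal subgroups is also not correct. For $k=2$ one computes directly that $s=(a,1)\sigma$ normalises a twisted diagonal $\{(u,u^{\varphi}) \mid u \in T\}$ if and only if $\varphi^2=a$ (up to the inner bookkeeping); that is, diagonal-type overgroups of conjugates of $s$ occur precisely when $a$ is a square in $\Aut(T)$, which is why the second clause of~(ii) is conditional on squareness -- normalising a diagonal does not ``translate into $\< a \> \cap T$ being trivial'' as you assert. Relatedly, a subdirect subgroup of $T^k$ invariant under an element inducing the $k$-cycle need not be a single diagonal copy of $T$: it is a product of diagonals over the blocks of an invariant partition, i.e.\ a copy of $T^d$ for some $d \mid k$, so for composite $k$ your dichotomy ``$D=T^k$ or $D$ diagonal'' is incomplete. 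Finally, ``one checks the order obstruction directly'' is not an argument for the divides-$4$ clause; that clause is exactly what rules out the residual small $2$-group coincidences in the square case and it has to be proved, not asserted. The norm-map (Shintani-type) reduction to hypothesis~(i) for elements of $T^k$ is a sound start, but without a genuine treatment of the elements outside $T^k$ and of the diagonal-type normalisers, the proof cannot be completed along the lines you describe.
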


The second major step is to generalise Breuer, Guralnick and Kantor's result Theorem~\ref{thm:breuer_guralnick_kantor} that $u(T) \geq 2$ for all nonabelian finite simple groups $T$ to all almost simple groups $A = \< T, a \>$. This long line of research was initiated by Burness and Guest for $T = \PSL_n(q)$ \cite{ref:BurnessGuest13}, continued by Harper for the remaining classical groups $T$ \cite{ref:Harper17,ref:HarperLNM} and completed by Burness, Guralnick and Harper for exceptional groups $T$ \cite{ref:BurnessGuralnickHarper21}. (We have already noted that the proof of $u(S\!_n) \geq 2$ for $n \neq 6$ was completed by Burness and Harper in \cite{ref:BurnessHarper20}, and the result for sporadic groups follows from computational work in \cite{ref:BreuerGuralnickKantor08}).

We conclude by highlighting the major obstacle that this body of work faced and then outlining the technique that overcame this obstacle. 

Let $G = \< T, g \>$ where $T$ is a finite simple group of Lie type and $g \in \Aut(T)$. Suppose $s^G$ witnesses $u(G) \geq 2$. A conjugate of $s$ generates with any element of $T$, so $G = \< T, s \>$. By replacing $s$ with a power if necessary, we may assume that $s \in Tg$. How do we describe elements of $Tg$ and their overgroups? Suppose that $T = \PSL_n(q)$ with $q=p^f$. If $g \in {\rm PGL}_n(q)$, then there are geometric techniques available, but what if $g$ is, say, the field automorphism $(a_{ij}) \mapsto (a_{ij}^p)$?

The technique that was used to answer these questions is known as \emph{Shintani descent}. This was introduced by Shintani in 1976 \cite{ref:Shintani76} (and generalised by Kawanaka in \cite{ref:Kawanaka77}) to study irreducible characters of almost simple groups. However, as first exploited by Fulman and Guralnick in their work on the Boston--Shalev conjecture \cite{ref:FulmanGuralnick12}, Shintani descent also provides a fruitful way of studying the conjugacy classes of almost simple groups. The main theorem is the following, and we follow Desphande's proof \cite{ref:Deshpande16}. (Here $\sigma_i$ is considered as an element of $\< X, \sigma_i \>$ for $i \in \{1,2\}$.)

\begin{theorem} \label{thm:shintani}
Let $X$ be a connected algebraic group, and let $\sigma_1, \sigma_2\: X \to X$ be commuting Steinberg endomorphisms. Then there is a bijection 
\[
F\: \{ \text{$X_{\sigma_1}$-classes in $X_{\sigma_1}\sigma_2$} \} \to \{ \text{$X_{\sigma_2}$-classes in $X_{\sigma_2}\sigma_1$} \}.
\]
\end{theorem}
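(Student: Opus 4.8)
The plan is to construct the bijection $F$ explicitly using the Lang--Steinberg theorem, which applies since $X$ is connected. Write $\sigma = \sigma_1$ and $\tau = \sigma_2$ for brevity, and recall these commute. The key idea is that an element of $X_\sigma\sigma_2 = X_\sigma\tau$ (inside $\langle X, \tau\rangle$) has the form $x\tau$ with $x \in X_\sigma$, i.e. $\sigma(x) = x$. First I would use Lang--Steinberg for the endomorphism $\sigma\tau$ (or $\tau$, depending on the set-up) to write such an $x$ as $x = a^{-1}\tau(a)$ for some $a \in X$; one checks using $\sigma(x)=x$ and the commuting of $\sigma,\tau$ that $a^{-1}\sigma(a)$ lies in $X_\tau$. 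The assignment $x\tau \mapsto (a^{-1}\sigma(a))\sigma$ is then the candidate for $F$ at the level of elements, and the bulk of the work is checking it is well-defined on classes and invertible.

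The steps, in order, are as follows. (1) Verify the "$a$" above exists: apply Lang--Steinberg to the Steinberg endomorphism $\tau$ of the connected group $X$ to solve $x = a^{-1}\tau(a)$. (2) Show $b := a^{-1}\sigma(a)$ satisfies $\tau(b) = b$, so $b\sigma \in X_\tau\sigma$; this uses $\sigma\tau = \tau\sigma$ and $\sigma(x)=x$ in a short computation. (3) Check that if $a$ is replaced by another solution $a'$, then $a' = ca$ for some $c \in X_\tau$, and that $b$ changes to a $X_\tau$-conjugate of itself — so the map on classes is well-defined. (4) Check $X_\sigma$-conjugate elements $x\tau$ map to the same $X_\tau$-class: conjugating $x$ by $t \in X_\sigma$ changes $a$ to $at^{-1}$ (up to a left $X_\tau$-factor), leaving the target class unchanged. (5) Run the symmetric construction (swapping the roles of $\sigma$ and $\tau$) to produce a map in the reverse direction and verify the two are mutually inverse; here the symmetry of the set-up — both $\sigma_i$ are Steinberg endomorphisms, $X$ connected, they commute — makes this essentially automatic, which is why the theorem is stated so cleanly.

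The main obstacle is step (3)–(4): pinning down precisely how the choices ($a$, and the representative $x\tau$ within its $X_\sigma$-class) interact, so that the induced map on classes is genuinely well-defined, and then matching this with the reverse construction to get a true inverse rather than just a map in each direction. This is where one must be careful that "$b$ changes to an $X_\tau$-conjugate" — the cocycle-type bookkeeping with left and right translates by elements of $X_\sigma$ and $X_\tau$ is routine but error-prone. A secondary point worth isolating is that one should phrase everything in terms of the twisted conjugation action ($y \cdot_\sigma z = y^{-1} z \sigma(y)$) on $X$, under which $X_\sigma$-classes in $X_\sigma\sigma_2$ correspond to certain orbits; Lang--Steinberg then says this action is transitive on suitable fibres, and the bijection $F$ becomes a natural identification of orbit sets. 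Packaging the argument this way, following Deshpande, keeps the computations in steps (2)–(5) short and makes the symmetry in step (5) transparent.
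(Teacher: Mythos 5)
Your plan is the classical explicit construction of the Shintani map, and it is a genuinely different route from the paper's. The paper, following Deshpande, forms the single set of commuting pairs $\{(g,h)\in X\sigma_2\times X\sigma_1 \mid [g,h]=1\}$ modulo conjugation by $X$, and applies Lang--Steinberg once to each coordinate: normalising the second coordinate to $\sigma_1$ identifies this orbit set with the $X_{\sigma_1}$-classes in $X_{\sigma_1}\sigma_2$, and normalising the first coordinate to $\sigma_2$ identifies it with the $X_{\sigma_2}$-classes in $X_{\sigma_2}\sigma_1$. Since both class sets are matched with the same intermediate object, your steps (3)--(5) -- well-definedness in the choice of $a$ and of representative, and the construction and verification of an inverse -- simply do not arise. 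Your approach can be pushed through, but the cocycle bookkeeping you identify as the main obstacle is exactly what the paper's packaging eliminates; what your version buys in exchange is an explicit formula for $F$ on representatives, which is what one wants for statements like \eqref{eq:shintani} but is not needed for the bare bijection.

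There is, however, a concrete error in step (2) as stated, and it matters because the whole construction rests on it: with your conventions $\sigma(x)=x$, $x=a^{-1}\tau(a)$ and $b=a^{-1}\sigma(a)$, one computes $\tau(b)=\tau(a)^{-1}\sigma(\tau(a))=x^{-1}a^{-1}\sigma(a)\sigma(x)=x^{-1}bx$, which is not $b$ in general, so $b$ need not lie in $X_\tau$ and $b\sigma$ need not lie in the target coset. The two Lang--Steinberg solutions must be taken with opposite orientations: either write $x=a\tau(a)^{-1}$ and keep $b=a^{-1}\sigma(a)$, or keep $x=a^{-1}\tau(a)$ and take $b=a\sigma(a)^{-1}$; then the same two-line computation does give $\tau(b)=b$. (Conceptually: choose $a$ so that conjugating the commuting pair $(x\tau,\sigma)$ by it yields a pair $(\tau,b\sigma)$; commutation with $\tau$ then forces $b\in X_\tau$, which is precisely the paper's mechanism.) Note also that Lang--Steinberg is applied to $\tau$ alone, not to $\sigma\tau$, and with the corrected orientation the bookkeeping in (3)--(4) comes out with the sides opposite to what you wrote: changing the representative to $t(x\tau)t^{-1}$ with $t\in X_\sigma$ replaces $a$ by $ta$ and leaves $b$ unchanged, while changing the Lang--Steinberg solution to $ac$ with $c\in X_\tau$ replaces $b\sigma$ by $c^{-1}(b\sigma)c$. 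Finally, the claim in (5) that invertibility is essentially automatic is only true once you repackage the argument via commuting pairs (at which point you have reproduced the paper's proof); for the explicit maps it is a further computation of the same kind, not a formality.
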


\begin{proof}
Let $S$ be the orbits of $\{ (g,h) \in X\sigma_2 \times X\sigma_1 \mid [g,h]=1 \}$ under the conjugation action of $X$. By the Lang--Steinberg theorem, $S$ is in bijection with the orbits of $\{ (x\sigma_2, \sigma_1) \mid x \in X_{\sigma_1} \}$ under conjugation by $X_{\sigma_1}$ and also with the orbits of $\{ (\sigma_2, y\sigma_1) \mid x \in X_{\sigma_2} \}$ under conjugation by $X_{\sigma_2}$. This provides a bijection between the $X_{\sigma_1}$-classes in $X_{\sigma_1}\sigma_2$ and the $X_{\sigma_2}$-classes in $X_{\sigma_2}\sigma_1$.
\end{proof}

The bijection in the proof of Theorem~\ref{thm:shintani}, known as the \emph{Shintani map} of $(X,\sigma_1,\sigma_2)$, has desirable properties. For instance, if $\sigma_1 = \sigma_2^e$ for $e \geq 1$, then
\begin{equation} \label{eq:shintani}
F((x\sigma_2)^{X_{\sigma_1}}) = (a^{-1}(x\sigma_1)^{-e}a)^{X_{\sigma_2}}
\end{equation} 
for some $a \in X$. Moreover, if $F(g^{X_{\sigma_1}}) = h^{X_{\sigma_2}}$, then it is easy to show that $C_{X_{\sigma_1}}(g) \cong C_{X_{\sigma_2}}(h)$, and, by now, extensive information is also available about how maximal overgroups of $g$ in $\< X_{\sigma_1}, \sigma_2 \>$ relate to the maximal overgroups of $h$ in $\< X_{\sigma_2}, \sigma_1 \>$. These latter results are crucial to proving Theorem~\ref{thm:burness_guralnick_harper} for almost simple groups of Lie type. The first result in this direction is due to Burness and Guest \cite[Corollary~2.15]{ref:BurnessGuest13}, and the subsequent developments are unified by Harper in \cite{ref:Harper21}, to which we refer the reader for further detail.

\begin{example} \label{ex:bgh}
We sketch how $u(G) \geq 2$ was proved for $G = \< T, g \>$ when $T = \Omega^+_{2m}(q)$ with $q=2^f$ and $g$ is the field automorphism $\p\:(a_{ij}) \mapsto (a_{ij}^2)$. We will not make further assumptions on $q$, but we will assume that $m$ is large. 

Let $X$ be the simple algebraic group ${\rm SO}_{2m}(\overline{\F}_2)$ and let $F$ be the Shintani map 

\noindent of $(X,\p^f,\p)$. Then $G = \< X_{\p^f}, \p \>$, and writing $G_0 = X_{\p} = \Omega^+_{2m}(2)$, we observe that $F$ gives a bijection between the conjugacy classes in $Tg$ and those in $G_0$. 

We define $s \in Tg$ such that $F(s^G) = s_0^{G_0}$ for a well chosen element $s_0 \in G_0$. In particular, \eqref{eq:shintani} implies that $s_0$ is $X$-conjugate to a power of $s$. To define $s_0$, fix $k$ such that $m-k$ is even and $\frac{\sqrt{2m}}{4} < 2k < \frac{\sqrt{2m}}{2}$, fix $A \in \Omega^-_{2k}(2)$ and $B \in \Omega^-_{2m-2k}(2)$ of order $2^k+1$ and $2^{m-k}+1$ and let $s_0 = \text{\scalebox{0.75}{$\left( \begin{array}{cc} A & 0 \\ 0 & B \end{array} \right)$}} \in \Omega^+_{2m}(2)$.

We now study $\M(G,s)$. First, a power of $s_0$ (and hence $s$) has a $1$-eigenspace of codimension $2k < \frac{\sqrt{2m}}{2}$, so \cite[Theorem~7.1]{ref:GuralnickSaxl03} implies that $s$ is not contained in any local or almost simple maximal subgroup of $G$. 

Next, a power of $s_0$ has order $2^{m-k}+1$, which is divisible by the \emph{primitive part} of $2^{2m-2k}-1$ (the largest divisor of $2^{2m-2k}-1$ that is prime to $2^l-1$ for all $0 < l < 2m-2k$). For sufficiently large $m$, we can apply the main theorem of \cite{ref:GuralnickPenttilaPraegerSaxl97} to deduce that all of the maximal overgroups of $s_0$ in $G_0 = \Omega^+_{2m}(2)$ are reducible. In particular, the only maximal overgroup of $s_0$ in $G_0$ arising as the set of fixed points of a closed positive-dimensional $\p$-stable subgroup of $X$ is the obvious reducible subgroup of type $({\rm O}^-_{2k}(2) \times {\rm O}^-_{2m-2k}(2)) \cap G_0$. Now the theory of Shintani descent \cite[Theorem~4]{ref:Harper21} implies that the only such maximal overgroup of $s$ in $G$ is one subgroup $H$ of type $({\rm O}^\pm_{2k}(q) \times {\rm O}^\pm_{2m-2k}(q)) \cap G$. 

Drawing these observations together and using Aschbacher's subgroup structure theorem \cite{ref:Aschbacher84}, we deduce that $\M(G,s) = \{H\} \cup \M'$, where $\M'$ consists of subfield subgroups. There are at most $\log_2{f}+1 = \log_2\log_2 q + 1$ classes of maximal subfield subgroups of $G$, and by \cite[Lemma~2.19]{ref:BurnessGuest13}, $s$ is contained in at most $|C_{G_0}(s_0)| = (2^k+1)(2^{2m-2k}+1)$ conjugates of a fixed maximal subgroup.

Using the fixed point ratio bound proved for reducible subgroups in \cite{ref:GuralnickKantor00} and irreducible subgroups in \cite{ref:Burness071,ref:Burness072,ref:Burness073,ref:Burness074}, for all nontrivial $x \in G$ we have
\[
Q(x,s) < \sum_{H \in \M(G,s)} \frac{|x^G \cap H|}{|x^G|} < \frac{5}{q^{\sqrt{n}/4}} + (\log_2\log_2{q}+1)(2^k+1)(2^{2m-2k}+1) \frac{2}{q^{n-3}}.
\]
In particular, for sufficiently large $m$, $Q(x,s) < \frac{1}{2}$ and $u(G) \geq 2$. Moreover, $Q(x,s) \to 0$ and $u(G) \to \infty$, as $m \to \infty$ or, for sufficiently large $m$, as $q \to \infty$.
\end{example}

\subsection{The generating graph} \label{ss:finite_graph}

In this section, we introduce a combinatorial object that gives a way to visualise the concepts we have introduced so far. The \emph{generating graph} of a group $G$, denoted $\Gamma(G)$, is the graph whose vertex set is $G \setminus 1$ and where two vertices $g, h \in G$ are adjacent if $\< g, h \> = G$. See Figure~\ref{fig:graph} for examples.

\begin{figure}[t]
\vspace{5mm}

\begin{center}
\begin{tikzpicture}[scale=0.42]
\node[white] (0) at (270:8cm) {};

\node[draw,circle,minimum size=0.8cm] (1) at (45 :5.5cm) {$ab$};
\node[draw,circle,minimum size=0.8cm] (2) at (135 :5.5cm) {$b$};
\node[draw,circle,minimum size=0.8cm] (3) at (225 :5.5cm) {$a^3b$};
\node[draw,circle,minimum size=0.8cm] (4) at (315 :5.5cm) {$a^2b$};

\node[draw,circle,minimum size=0.8cm] (5) at (0:1.5cm) {$a^3$};
\node[draw,circle,minimum size=0.8cm] (6) at (180:1.5cm) {$a$};

\node[draw,circle,minimum size=0.8cm] (8) at (270:6.5cm) {$a^2$};

\path (1) edge[-] (2);
\path (2) edge[-] (3);
\path (3) edge[-] (4);
\path (4) edge[-] (1);
\path (1) edge[-] (5);
\path (2) edge[-] (5);
\path (3) edge[-] (5);
\path (4) edge[-] (5);

\path (1) edge[-] (6);
\path (2) edge[-] (6);
\path (3) edge[-] (6);
\path (4) edge[-] (6);
\end{tikzpicture} \hspace{1cm}
\begin{tikzpicture}[scale=0.5,circle,inner sep=0.01cm,minimum size=0.66cm]

\node[draw] (1) at (90-360/11 :5cm) {\tiny{(1\,2)(3\,4)}};
\node[draw] (2) at (90 :5cm) {\tiny{(1\,3)(2\,4)}};
\node[draw] (3) at (90+360/11 :5cm) {\tiny{(1\,4)(2\,3)}};

\node[draw] (4) at (90+360/11*2: 5cm) {\tiny{(1\,2\,3)}};
\node[draw] (5) at (90+360/11*3: 5cm) {\tiny{(1\,3\,2)}};
\node[draw] (6) at (90+360/11*4: 5cm) {\tiny{(1\,2\,4)}};
\node[draw] (7) at (90+360/11*5: 5cm) {\tiny{(1\,4\,2)}};
\node[draw] (8) at (90+360/11*6: 5cm) {\tiny{(1\,3\,4)}};
\node[draw] (9) at (90+360/11*7: 5cm) {\tiny{(1\,4\,3)}};
\node[draw] (10) at (90+360/11*8: 5cm) {\tiny{(2\,3\,4)}};
\node[draw] (11) at (90+360/11*9: 5cm) {\tiny{(2\,4\,3)}};

\foreach \x in {1,...,3}
\foreach \y in {4,...,11}
{\path (\x) edge[-] (\y);}

\path (5)  edge[-] (6);
\path (7)  edge[-] (8);
\path (9)  edge[-] (10);
\path (11) edge[-] (4);

\foreach \x in {4,...,11}
\foreach \y in {4,...,11}
{
 \pgfmathtruncatemacro{\a}{\x-\y};
 \ifnum\a=0{}
 \else
 {
  \ifnum\a=1{}
  \else
  {
  \ifnum\a=-1{}
  \else
   {
    \path (\x) edge[-] (\y);
   }
   \fi
  }
  \fi
 }
 \fi
} 
\end{tikzpicture}
\end{center}
\caption{The generating graphs of $D_8 = \< a, b \mid a^4 = 1, b^2 = 1, a^b = a^{-1} \>$ and the alternating group $A_4$.} \label{fig:graph}
\end{figure}
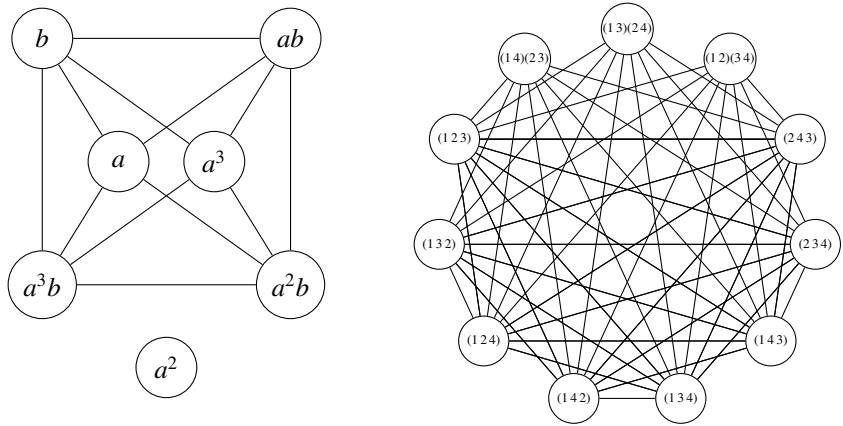

A question that immediately comes to mind is: when is $\Gamma(G)$ is connected? This question has a remarkably straightforward answer.

\begin{theorem} \label{thm:generating_graph}
Let $G$ be a finite group. Then the following are equivalent:
\begin{enumerate}
\item $\Gamma(G)$ has no isolated vertices
\item $\Gamma(G)$ is connected
\item $\Gamma(G)$ has diameter at most two
\item every proper quotient of $G$ is cyclic.
\end{enumerate}
\end{theorem}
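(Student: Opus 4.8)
The plan is to establish the cycle of implications (iii) $\Rightarrow$ (ii) $\Rightarrow$ (i) $\Rightarrow$ (iv) $\Rightarrow$ (iii). The first two are immediate: a graph of diameter at most two is connected, and a connected graph on more than one vertex has no isolated vertices (the case $|G| = 1$ or $|G|$ prime being trivial or vacuous). For (i) $\Rightarrow$ (iv), I would argue contrapositively exactly as in the paragraph following Conjecture~\ref{con:breuer_guralnick_kantor}: if $G$ has a proper noncyclic quotient $G/N$, pick $1 \neq n \in N$; then for any $g \in G$ the quotient $\langle n, g\rangle N/N = \langle Ng\rangle$ is cyclic and proper, so $\langle n,g\rangle \neq G$, meaning $n$ is an isolated vertex of $\Gamma(G)$. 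So the entire content of the theorem is the implication (iv) $\Rightarrow$ (iii): if every proper quotient of $G$ is cyclic, then $\Gamma(G)$ has diameter at most two, i.e. any two nontrivial $x, y \in G$ have a common neighbour.

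The key step is to reduce this to the spread statement already available. If every proper quotient of $G$ is cyclic and $G$ is cyclic, then $\Gamma(G)$ is complete on the generators and the non-generators are isolated — wait, that is not diameter two. So one must be slightly careful: when $G$ is cyclic, $\Gamma(G)$ has isolated vertices unless $G$ is trivial or of prime order, so strictly the equivalence is being asserted for the reformulated conditions and one reads "cyclic" in (iv) as permitting these degenerate cases, where (i)–(iii) hold vacuously or trivially. For $G$ noncyclic with every proper quotient cyclic: by Theorem~\ref{thm:burness_guralnick_harper}, $s(G) \geq 2$. Now given nontrivial $x, y \in G$, apply the definition of spread with $k = 2$ to the pair $x_1 = x$, $x_2 = y$: there exists $z \in G$ with $\langle x, z\rangle = \langle y, z\rangle = G$. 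Then $x$–$z$–$y$ is a path of length two in $\Gamma(G)$ (and if $\langle x,y\rangle = G$ already they are adjacent), so $\Gamma(G)$ has diameter at most two. This closes the cycle.

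The main obstacle is entirely concentrated in the appeal to Theorem~\ref{thm:burness_guralnick_harper}, which is the deep Burness–Guralnick–Harper result whose proof (reduction theorem plus Shintani descent plus the almost-simple cases) occupies the bulk of \cite{ref:BurnessGuralnickHarper21}; the rest of the argument here is bookkeeping. I would also note that this theorem nicely packages three strata: the soluble case via Theorem~\ref{thm:brenner_wiegold}, the simple case via Theorem~\ref{thm:breuer_guralnick_kantor} (which gives $s(G) \geq u(G) \geq 2$), and the general insoluble case via the reduction machinery; but for the write-up it suffices to cite Theorem~\ref{thm:burness_guralnick_harper} as a black box. One should remark that diameter exactly two (rather than one) is genuinely needed, since $\Gamma(G)$ is complete only for $G$ cyclic of prime order, so the bound "at most two" in (iii) is sharp.
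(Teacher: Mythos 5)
Your overall route is exactly the paper's: after the elementary implications (iii) $\Rightarrow$ (ii) $\Rightarrow$ (i) and the contrapositive argument for (i) $\Rightarrow$ (iv), the whole content is (iv) $\Rightarrow$ (iii), which the paper also obtains purely as a reformulation of Theorem~\ref{thm:burness_guralnick_harper} (via the dictionary $s(G) \geq 2$ iff $\mathrm{diam}(\Gamma(G)) \leq 2$). So in substance your proof is correct and is the intended one.

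However, your digression on the cyclic case is both unnecessary and factually wrong, and as written it would actually contradict the theorem you are proving. In a nontrivial cyclic group the non-generators are \emph{not} isolated in $\Gamma(G)$: if $g$ generates $G$ and $x \neq g$ is any nontrivial element, then $\<x,g\> \supseteq \<g\> = G$, so every vertex is adjacent to every generator other than itself, and any two vertices either are adjacent or have a generator as a common neighbour. Hence (i)--(iii) hold genuinely (not ``vacuously'') for cyclic $G$, and no special reading of (iv) is required; there is no need to split off this case at all, since Theorem~\ref{thm:burness_guralnick_harper} is stated for arbitrary finite groups and cyclic groups have $s(G) = \infty$ (a generator works as the common complement), so your appeal to it already covers them uniformly. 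Your closing remark that $\Gamma(G)$ is complete only for $G$ cyclic of prime order is also false --- $\Gamma(C_4)$ is a triangle --- though sharpness of the bound in (iii) is of course true for typical groups. One further small point: when deducing diameter at most two from $s(G) \geq 2$, you should (as you implicitly do) note that the common element $z$ supplied by the spread condition might coincide with $x$ or $y$, in which case $x$ and $y$ are already adjacent; with that observation the bookkeeping is complete.
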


Of course, Theorem~\ref{thm:generating_graph} is simply a reformulation of Theorem~\ref{thm:burness_guralnick_harper} due to Burness, Guralnick and Harper. Indeed, the generating graph gives an enlightening perspective on the concepts introduced so far:
\begin{enumerate}
\item $G$ is $\frac{3}{2}$-generated if and only if $\Gamma(G)$ has no isolated vertices
\item $s(G)$ is the greatest $k$ such that any $k$ vertices of $\Gamma(G)$ have a common neighbour, which means that $s(G) \geq 2$ if and only if $\mathrm{diam}(\Gamma(G)) \leq 2$
\item $\gamma_t(G)$ is the total domination number of $\Gamma(G)$: the least size of a set of vertices whose neighbours cover $\Gamma(G)$.
\end{enumerate}

Regarding (iii), the total domination number is a well studied graph invariant and was the inspiration for the group theoretic term (and the symbol $\gamma_t$ is the graph theoretic notation). Regarding (ii), as far as the author is aware, the graph invariant corresponding to the spread of a group, while natural, does not have a canonical name, but, inspired by the recent work on the spread of finite groups, some authors have started to use the term \emph{spread} for this graph invariant (that is, the \emph{spread} of a graph is the greatest $k$ such that any $k$ vertices have a common neighbour), see for example \cite[Section~2.5]{ref:Cameron22}. 

What are the connected components of $\Gamma(G)$ are in general? The following conjecture (first posed as a question in \cite{ref:CrestaniLucchini13-Israel}) proposes a straightforward answer.

\begin{conjecture} \label{con:generating_graph}
Let $G$ be a finite group. Then the graph obtained from $\Gamma(G)$ by removing the isolated vertices is connected.
\end{conjecture}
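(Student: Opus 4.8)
The plan is to argue by induction on $|G|$, reducing the conjecture to a manageable family of groups. If every proper quotient of $G$ is cyclic, then Theorem~\ref{thm:generating_graph} (equivalently Theorem~\ref{thm:burness_guralnick_harper}) already shows that $\Gamma(G)$ is connected, so there is nothing to do; and if $G$ needs at least three generators, then $\Gamma(G)$ has no edges at all and deleting its (isolated) vertices leaves the empty graph. Thus we may assume that $G$ is $2$-generated and has a non-cyclic proper quotient, and we write $\Delta(G)$ for the graph obtained from $\Gamma(G)$ by removing the isolated vertices.

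The first step is a reduction to crown-based powers. Using the theory of crowns (going back to Gasch\"utz and developed for generation problems by Lucchini and others), one shows that $\Delta(G)$ is connected for all finite groups provided it is connected for every crown-based power $L_k$ of a monolithic primitive group $L$: a non-isolated vertex of $\Gamma(G)$, and an edge, can be tracked through the quotient maps $G \to G/N$ as $N$ runs over the minimal normal subgroups of $G$, and the only genuine obstruction to assembling a path arises inside such an $L_k$. This reduction also separates the two essentially different regimes: $L$ of affine type (abelian socle) and $L$ almost simple or of diagonal type (non-abelian socle).

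For crown-based powers $L_k$ with $L$ of affine type, I would follow the explicit combinatorial analysis that Crestani and Lucchini used to settle the soluble case: the relevant maximal subgroups of $L_k$ are the complements of the socle and their diagonal relatives, and one routes paths by hand, using that $s(L) \geq 2$ by Theorem~\ref{thm:brenner_wiegold}. For $L_k$ with $L$ almost simple with socle $T$, the engine is the strong spread theorem for almost simple groups underlying Theorem~\ref{thm:burness_guralnick_harper}: for each non-isolated vertex it yields many generating partners drawn from a single conjugacy class, which one then tries to assemble coordinate-by-coordinate along the $k$ factors of $L^k$ (a Chinese-remainder-type construction) into a genuine generating tuple for $L_k$. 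The probabilistic estimates of Lemmas~\ref{lem:spread} and~\ref{lem:udn_prob}, fed by the fixed point ratio bound~\eqref{eq:fpr} and its refinements, are what make such an assembly feasible.

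The hardest part will be the non-abelian crown-based powers with $k$ large: the maximal subgroups of $L_k$ that are diagonal across coordinates (surjecting onto $L$ in each coordinate but with the coordinates correlated through automorphisms of $T$) proliferate with $k$, and a near-generating tuple can be caught in many of them at once. Showing that for any two non-isolated vertices one can find a third element --- or a short path of elements --- avoiding all the diagonal traps of both is exactly where uniform fixed point ratio bounds and a careful bookkeeping of the automorphism data are indispensable, and it is this step that currently blocks a complete proof. A secondary difficulty is the affine case in which $L/\mathrm{soc}(L)$ is itself non-cyclic, where the quotient structure is richer than for monolithic almost simple $L$ and the inductive hypothesis must be deployed with more care.
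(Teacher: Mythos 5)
You have not proved the statement, and neither does the paper: Conjecture~\ref{con:generating_graph} is stated there as an \emph{open} conjecture, with the only known cases being soluble groups and characteristically simple groups (Crestani--Lucchini \cite{ref:CrestaniLucchini13-Israel,ref:CrestaniLucchini13-JAlgCombin}) and groups all of whose proper quotients are cyclic (via Theorem~\ref{thm:burness_guralnick_harper}). Your proposal is a research programme rather than a proof, and you say so yourself: the step you identify as ``currently blocking a complete proof'' --- handling crown-based powers $L_k$ with $L$ of non-abelian monolithic type and $k$ large, where diagonal-type maximal subgroups proliferate --- is precisely the open heart of the conjecture, and no amount of fixed point ratio bookkeeping sketched at the level of Lemmas~\ref{lem:spread} and~\ref{lem:udn_prob} is known to close it. The cases you can genuinely finish (all proper quotients cyclic, via Theorem~\ref{thm:generating_graph}; the soluble/affine analysis following Crestani--Lucchini) reproduce exactly the partial results the paper already cites, so the proposal adds no new ground.

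Two further points of caution about the parts you treat as routine. First, the reduction ``it suffices to prove connectivity for crown-based powers'' is asserted with ``one shows that \dots'', but tracking a non-isolated vertex and an edge through quotients $G \to G/N$ is delicate here: a vertex can be non-isolated in $G$ while its image is isolated in a quotient, and an edge of $\Gamma(G/N)$ need not lift to an edge of $\Gamma(G)$ through prescribed coset representatives, so this reduction needs a genuine argument (it is in the spirit of Lucchini's crown machinery, but it is not a formality and is not established in the paper). Second, in the almost simple crown case the obstruction is not merely quantitative: for a coordinate-by-coordinate assembly one must control conjugacy of the coordinate pairs under $\Aut(T)$ (as in condition~(ii) of Theorem~\ref{thm:bgh_reduction}), and the $\Sm{6}$-type anomalies of Theorem~\ref{thm:burness_guralnick_harper_uniform} show that uniform-spread input can simply fail, so probabilistic estimates alone cannot carry the construction.
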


Conjecture~\ref{con:generating_graph} is known to be true if $G$ is soluble or characteristically simple by work of Crestani and Lucchini \cite{ref:CrestaniLucchini13-Israel,ref:CrestaniLucchini13-JAlgCombin} or if every proper quotient of $G$ is cyclic as a consequence of Theorem~\ref{thm:burness_guralnick_harper}. Otherwise, Conjecture~\ref{con:generating_graph} remains an intriguing open question about the generating sets of finite groups.

There is now a vast literature on the generating graph and surveying it is beyond the scope of this survey article, so we will make only a few remarks. The first paper to study $\Gamma(G)$ in its own right, and call it the \emph{generating graph}, was \cite{ref:LucchiniMaroti09Ischia} by Lucchini and Mar\'oti, who then studied various aspects of this graph in subsequent papers (for example, \cite{ref:BreuerGuralnickLucchiniMarotiNagy10,ref:LucchiniMaroti09}). However, $\Gamma(G)$ first appeared in the literature, indirectly, as a construction in a proof of Liebeck and Shalev in \cite{ref:LiebeckShalev96JAlg}. A major result of that paper is that there exist constants $c_1, c_2 > 0$ such that for all finite simple groups $G$, the probability that two randomly chosen elements generate $G$, denoted $P(G)$, satisfies
\begin{equation} \label{eq:prob}
1 - \frac{c_1}{m(G)} \leq P(G) \leq 1 - \frac{c_2}{m(G)}
\end{equation}
where $m(G)$ is the smallest index of a subgroup of $G$ (for example, $m(A_n)=n$). From this, Liebeck and Shalev deduce that there is a constant $c > 0$ such that every finite simple group $G$ contains at least $c \cdot m(G)$ elements that pairwise generate $G$. The proof of this corollary simply involves applying Tur\'an's theorem to $\Gamma(G)$, exploiting \eqref{eq:prob}. A couple of subsequent papers \cite{ref:Blackburn06,ref:BritnellEvseevGuralnickHolmesMaroti08} continued the study of cliques in $\Gamma(G)$, partly motivated by the observation that the largest size of a clique in $\Gamma(G)$, denoted $\mu(G)$, is a lower bound for the smallest number of proper subgroups whose union is $G$, denoted $\sigma(G)$. Returning to spread, it is easy to see that $s(G) < \mu(G) \leq \sigma(G)$, so these results give upper bounds on spread, which are otherwise difficult to find. Indeed, the best upper bounds for the smaller 14 sporadic groups (including the two where the spread is known exactly) were found in \cite{ref:BradleyHolmes07} by a clever refinement of this bound (for the larger 12 sporadic groups, different methods were used \cite{ref:Fairbarin12CA}).

\subsection{Applications} \label{ss:finite_app}

We conclude Section~\ref{s:finite} with three applications of spread. The first shows how $\frac{3}{2}$-generation naturally arises in a completely different context. The second highlights the benefit of studying spread, not just $\frac{3}{2}$-generation. The third moves beyond simple groups and applies the classification of finite $\frac{3}{2}$-generated groups.

\vspace{0.5\baselineskip}

\textbf{Application 1. Word maps. } From a word $w = w(x,y)$ in the free group $F_2$, we obtain a \emph{word map} $w\: G \times G \to G$ and we write $w(G) = \{ w(g,h) \mid g,h \in G\}$. 

Let $G$ be a nonabelian finite simple group. For natural choices of $w$, there has been substantial recent progress showing that $w(G) = G$. For example, solving the Ore Conjecture, Liebeck, O'Brien, Shalev and Tiep \cite{ref:LiebeckOBrienShalevTiep10} proved that $w = x^{-1}y^{-1}xy$ is surjective (that is, every element is a commutator). 

Now consider the converse question: which subsets $S \subseteq G$ arise as images of some word $w \in F_2$? Such a subset $S$ must satisfy $1 \in S$ (as $w(1,1)=1$) and $S^a = S$ for all $a \in \Aut(G)$ (as $w(x^a,y^a) = w(x,y)^a$), and Lubotzky \cite{ref:Lubotzky14} proved that these conditions are sufficient.

\begin{theorem} \label{thm:lubotzky}
Let $G$ be a finite simple group and let $S \subseteq G$. Then $S$ is the image of a word $w \in F_2$ if and only if $1 \in S$ and $S^a = S$ for all $a \in \Aut(G)$.
\end{theorem}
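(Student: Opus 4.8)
The forward implication is the routine observation made just before the statement: if $S=w(G)$ for $w\in F_2$ then $1=w(1,1)\in S$, and $w(g,h)^a=w(g^a,h^a)$ for all $a\in\Aut(G)$, so $S^a=S$. All the work is in the converse. Fix such an $S$ and write $S\setminus 1=s_1^{\Aut(G)}\cup\cdots\cup s_k^{\Aut(G)}$ as a union of $\Aut(G)$-orbits (with $k=0$ permitted, the case $S=\{1\}$ being trivial); the goal is to construct $w\in F_2$ with $w(G)=S$.

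The first device is that, although only two variables are available, $F_2$ contains free subgroups of every finite rank: for each $n$, the elements $y,\,y^{x},\,y^{x^{2}},\dots,y^{x^{n}}$ freely generate a free subgroup of rank $n+1$. Thus any word $u(z_0,\dots,z_n)$ gives rise to a word $\widehat u(x,y):=u(y,y^{x},\dots,y^{x^{n}})\in F_2$ with $\widehat u(g,h)=u(h,h^{g},\dots,h^{g^{n}})$ for all $g,h\in G$. So it is enough to design a multivariable word $u$ whose image, when evaluated on the ``conjugation tuples'' $(h,h^{g},\dots,h^{g^{n}})$, equals $S$; this is how a word in $F_2$ can act with the flexibility of a word in many variables.

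Next, $\frac{3}{2}$-generation enters through Theorem~\ref{thm:guralnick_kantor} (a CFSG-dependent result): for each $i$ fix $t_i\in G$ with $\langle s_i,t_i\rangle=G$. The plan is to build for each $i$ a ``gadget'' subword $U_i$ that, fed a $G$-conjugate of the generating pair $(s_i,t_i)$, returns the corresponding conjugate of $s_i$ — so its output runs over all of $s_i^{G}$, hence (word images being automatically $\Aut(G)$-invariant) over the whole orbit $s_i^{\Aut(G)}$ — and returns $1$ on inputs that do not ``match''. The role of Theorem~\ref{thm:guralnick_kantor} is exactly to guarantee that $s_i$ genuinely lies in a generating pair, so that the gadget exists and attains every element of $s_i^{\Aut(G)}$. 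A convenient auxiliary for the ``off'' behaviour is a word $\phi\in F_2$ with $\phi(g,h)=1$ whenever $\langle g,h\rangle\neq G$: since every proper subgroup of $G$ lies in a conjugate of one of the maximal subgroups $M_1,\dots,M_r$ (one per class), and each composition factor of $M_1\times\cdots\times M_r$ has order less than $|G|$ and so is not isomorphic to the simple group $G$, the group $G$ is not in the variety generated by $M_1\times\cdots\times M_r$; hence some group law valid in every $M_j$ fails in $G$, and substituting virtual variables as above converts it into such a $\phi$.

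The last step is to assemble $U_1,\dots,U_k$ (together with $\phi$) into one word $w$ so that $w(G)=\{1\}\cup\bigcup_i s_i^{\Aut(G)}=S$ exactly — and I expect this to be the main obstacle. The inclusion $\supseteq$ is forced by feeding in the right conjugate inputs, but the inclusion $\subseteq$ demands that no unintended ``cross terms'' survive. With only two real variables, every gadget is reading (overlapping portions of) the same conjugation tuple, so one has to arrange, by choosing which virtual variables $y^{x^{j}}$ each gadget consults, by inserting power-word exponents (all entries of a conjugation tuple have the same order) and the guard $\phi$, and — the most delicate point — by finding a mechanism to separate distinct $\Aut(G)$-classes even when they contain elements of equal order, that at most one gadget is ever ``active'' and that every surviving value lies in the intended class. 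It is precisely in this image-control step that the structure of $G$ as a simple group, both via $\frac{3}{2}$-generation and via tight control of its proper subgroups, has to be exploited most carefully.
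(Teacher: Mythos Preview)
Your proposal is incomplete, and the paper's argument takes a fundamentally different (and much shorter) route that avoids precisely the obstacle you identify.

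You try to build $w$ constructively from gadgets $U_i$ and a guard word $\phi$, and you correctly flag that the assembly step --- ensuring that only one gadget is ever active and that no cross terms survive --- is the hard part. You do not actually carry this step out, and the difficulty is real: the ``virtual variables'' $y,y^x,\dots,y^{x^n}$ are not independent on evaluation (they are all $G$-conjugates of the same element), so the multivariable flexibility you invoke is largely illusory, and separating $\Aut(G)$-classes of equal element order by such means is not at all straightforward.

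The paper sidesteps all of this by a non-constructive counting argument. List $G^2=\{(a_i,b_i)\}_{i\le |G|^2}$ with the generating pairs first, and consider the single homomorphism $\varphi\colon F_2\to G^{|G|^2}$ sending $x\mapsto(a_i)_i$ and $y\mapsto(b_i)_i$. Then $w(a_i,b_i)=\varphi(w)_i$, so $w(G)=S$ is equivalent to a particular tuple $z$ (with $z_i=a_i$ if $i\le\ell$ and $a_i\in S$, else $z_i=1$) lying in $\varphi(F_2)$. The $\frac{3}{2}$-generation theorem is used exactly once, to check that the coordinates of $z$ already exhaust $S$. The image $\varphi(F_2)$ is then identified via a theorem of Hall: on the generating-pair coordinates it is the ``$\Aut(G)$-diagonal'' subgroup $\{(g_i)_{i\le\ell}: g_j=g_i^a \text{ whenever } (a_j,b_j)=(a_i^a,b_i^a)\}\cong G^{\ell/|\Aut(G)|}$, and the $\Aut(G)$-invariance of $S$ is exactly the hypothesis needed to place $z$ in this subgroup.

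So the key idea you are missing is: do not try to write down $w$; instead compute the \emph{entire} image of the universal evaluation map and observe that the target tuple lies in it. This replaces your delicate assembly step with a one-line verification, at the cost of giving no explicit $w$.
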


The proof of Theorem~\ref{thm:lubotzky} is short and fairly elementary, except it uses Theorem~\ref{thm:guralnick_kantor}. Indeed, Theorem~\ref{thm:guralnick_kantor} is the only way it depends on the CFSG.

\begin{proof}[Proof outline of Theorem~\ref{thm:lubotzky}]
Let $G^2 = \{ (a_i,b_i) \mid 1 \leq i \leq |G|^2 \}$ be ordered such that $\< a_i, b_i \> = G$ if and only if $i \leq \ell$. Fix the free group $F_2 = \< x, y \>$ and let $\p\: F_2 \to G^{|G|^2}$ be defined as $\p(x) = (a_1, \dots, a_{|G|^2})$ and $\p(y) = (b_1, \dots, b_{|G|^2})$. Let $z = (z_1, \dots, z_{|G|^2})$ where $z_i = a_i$ if $i \leq \ell$ and $a_i \in S$ and $z_i = 1$ otherwise.

By Theorem~\ref{thm:guralnick_kantor}, every nontrivial element of $G$ is contained in a generating pair, so, in particular, $\{ z_i \mid 1 \leq i \leq |G|^2 \} = S \cup \{1\} = S$. 

By an elementary argument, Lubotzky shows that $\p(F_2) = H \times K$ where $H$ and $K$ are the projections of $\p(F_2)$ onto the first $\ell$ factors of $G^{|G|^2}$ and the remaining $|G|^2-\ell$ factors, respectively. Moreover, using a theorem of Hall \cite{ref:Hall36}, $H$ is the subgroup of $G^\ell$ isomorphic to $G^{\ell/|\Aut(G)|}$ with the defining property that $(g_1, \dots, g_\ell) \in H$ if and only if for all $a \in \Aut(G)$ and $1 \leq i,j \leq \ell$ we have $g_i = g_j^a$ whenever $\p_i = \p_{\!j}\,a$. In particular, since $S^a = S$ for all $a \in \Aut(G)$, we deduce that $z \in \p(F_2)$. Therefore, there exists $w \in F_2$ such that for all $1 \leq i \leq |G|^2$ we have $w(a_i,b_i) = \p(w)_i = z_i$.

Combining the conclusions of the previous two paragraphs, we deduce that $w(G) = \{ w(a_i,b_i) \mid 1 \leq i \leq |G|^2 \} = \{ z_i \mid 1 \leq i \leq |G|^2 \} = S$.
\end{proof}

\vspace{0.5\baselineskip}

\textbf{Application 2. The product replacement graph. } For a positive integer $k$, the vertices of the product replacement graph $\Gamma_k(G)$ are the generating $k$-tuples of $G$, and the neighbours of $(x_1, \dots, x_i, \dots, x_k)$ in $\Gamma_k(G)$ are $(x_1, \dots, x_ix_j^\pm, \dots, x_k)$ and $(x_1, \dots, x_j^\pm x_i, \dots, x_k)$ for each $1 \leq i \neq j \leq k$. 

The product replacement graph arises in a number of contexts, most notably, the product replacement algorithm for computing random elements of $G$, which involves a random walk on $\Gamma_k(G)$, see \cite{ref:CellerLeedhamGreenMurrayNiemeyerOBrien95}. Thus, the connectedness of $\Gamma_k(G)$ is of particular interest. Specifically, Pak \cite[Question~2.1.33]{ref:Pak01} asked whether $\Gamma_k(G)$ is connected whenever $k$ is strictly greater than $d(G)$, the smallest size of a generating set for $G$. This question is open, even for finite simple groups where Wiegold conjectured that the answer is true. Nevertheless, the following lemma of Evans \cite[Lemma~2.8]{ref:Evans93} shows the usefulness of spread. (Here a generating tuple of $G$ is said to be \emph{redundant} if some proper subtuple also generates $G$.)

\begin{lemma}\label{lem:evans}
Let $k \geq 3$ and let $G$ be a group such that $s(G) \geq 2$. Then all of the redundant generating $k$-tuples are connected in $\Gamma_k(G)$.
\end{lemma}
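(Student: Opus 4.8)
The plan is to show that every redundant generating $k$-tuple can be transformed, by Nielsen moves, into a fixed "normal form" tuple, and hence all redundant tuples lie in one component of $\Gamma_k(G)$. First I would observe that since $k \geq 3$, a redundant generating $k$-tuple $(x_1,\dots,x_k)$ has, after possibly permuting coordinates (and permutations are realised by sequences of Nielsen moves, as is standard), the property that $(x_1,\dots,x_{k-1})$ generates $G$. The key point is then to "clear" the last coordinate: using the moves $x_k \mapsto x_k x_j^{\pm}$ and $x_k \mapsto x_j^{\pm} x_k$ with $j < k$, and the fact that $\langle x_1,\dots,x_{k-1}\rangle = G$, we can replace $x_k$ by any element of $G$ we like while leaving the first $k-1$ coordinates unchanged. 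In particular we may set $x_k = 1$. (This is the one place that uses $k \geq 3$ only mildly — it really just uses $k \geq 2$; the hypothesis $k \geq 3$ will be needed in the next step.)

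Next I would use the hypothesis $s(G) \geq 2$. Having reduced to tuples of the form $(x_1,\dots,x_{k-1},1)$ with $\langle x_1,\dots,x_{k-1}\rangle = G$, I want to show all of these are connected. Since $s(G) \geq 2$, given any two nontrivial elements — in particular given $x_1$ and $x_2$ — there exists $y \in G$ with $\langle x_1, y\rangle = \langle x_2, y\rangle = G$. Place such a $y$ into the (currently trivial) last slot, so we are at $(x_1,x_2,\dots,x_{k-1},y)$; this requires $k-1 \geq 2$, i.e. $k \geq 3$, so that slots $1$ and $2$ and the last slot are genuinely available and distinct. Now $\langle x_1, y\rangle = G$, so using Nielsen moves on coordinates $2,\dots,k-1$ against the last coordinate $y$ we can clear each of $x_2,\dots,x_{k-1}$ to $1$ (again because the relevant subtuple together with $y$ generates $G$ at each stage, since it already contains the generating pair $x_1,y$). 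We arrive at the tuple $(x_1,1,\dots,1,y)$. Symmetrically, starting from a different redundant tuple and routing through its first coordinate $x_1'$ we reach $(x_1',1,\dots,1,y')$.

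It then remains to connect tuples of the shape $(a,1,\dots,1,b)$ with $\langle a,b\rangle = G$ to one another. This is where I would again invoke $s(G)\geq 2$ together with the freedom afforded by the extra trivial coordinates. Given generating pairs $(a,b)$ and $(a',b')$, pick $c \in G$ with $\langle a,c\rangle = \langle a',c\rangle = G$ (spread $\geq 2$, or even just $\geq 1$ applied twice and then a third common neighbour — more cleanly, spread $\geq 2$ gives a single $c$ working for the pair $\{a,a'\}$). Move $c$ into a trivial middle slot to get $(a,c,1,\dots,1,b)$; since $\langle a,c\rangle = G$ we may now freely modify the last coordinate $b$ and the remaining middle coordinates, clearing $b$ to $1$, giving $(a,c,1,\dots,1,1)$, and then likewise reach $(a,c,1,\dots,1)$ from $(a',b')$ via $(a',c,1,\dots,1,b')$. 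Finally $(a,c,1,\dots,1)$ and $(a',c,1,\dots,1)$ are connected by clearing $a$ (resp. $a'$) against $c$ and rebuilding, since $\langle a,c\rangle = \langle a',c\rangle = G$; so all redundant tuples reach a common vertex.

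The main obstacle, and the part requiring care rather than cleverness, is the bookkeeping in the "clearing" steps: one must check at each Nielsen move that the subtuple one is relying on still generates $G$, so that the moves $x_i \mapsto x_i x_j^{\pm}$ can indeed be iterated to turn $x_i$ into any target element (this is the familiar fact that if $\langle x_j : j \in J\rangle = G$ then for a coordinate $i \notin J$ the orbit of $x_i$ under the moves against $\{x_j\}_{j\in J}$ is all of $G$). Organising the argument so that a single "pivot" coordinate always carries a half of a generating pair keeps this invariant transparent; the hypothesis $k \geq 3$ is exactly what guarantees there is always a spare coordinate to serve as the pivot while two others hold the pair. No deeper input is needed — the whole proof is Nielsen-move manipulation plus two applications of $s(G) \geq 2$.
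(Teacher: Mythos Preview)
Your approach uses the same two ingredients as the paper --- the ``clearing'' fact that if $\langle x_j : j \in J\rangle = G$ then any coordinate outside $J$ can be driven to an arbitrary element by Nielsen moves, together with $s(G)\geq 2$ to produce pivot elements --- so the strategy is sound. There is, however, a genuine slip in your final step: from $(a,c,1,\dots,1)$ you cannot ``clear $a$ against $c$'' to reach $(a',c,1,\dots,1)$, because the complement of the first coordinate is $(c,1,\dots,1)$, which generates only $\langle c\rangle$, not $G$. The fix is easy and uses exactly your key fact: since $\langle a,c\rangle = G$, first write $a'$ into a spare slot to get $(a,c,a',1,\dots,1)$; now $\langle a',c\rangle = G$, so coordinate $1$ can be changed to $a'$, and then coordinate $3$ cleared. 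This is the sort of bookkeeping you yourself flag as the main hazard. A second minor point: when you invoke $s(G)\geq 2$ for the pair $x_1,x_2$, you have not arranged $x_2\neq 1$; but in fact you only use $\langle x_1,y\rangle = G$ at that stage, so $s(G)\geq 1$ and $x_1\neq 1$ suffice there.

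By comparison, the paper's argument is shorter and avoids the normal-form detour. Given redundant tuples $x$ and $y$, it permutes (citing Pak for why this is permissible in $\Gamma_k$) so that $\langle x_1,\dots,x_{k-1}\rangle = \langle y_1,\dots,y_{k-1}\rangle = G$ with $x_1\neq 1\neq y_2$, chooses a \emph{single} $z$ with $\langle x_1,z\rangle = \langle y_2,z\rangle = G$, and then walks
\[
x \;\to\; (x_1,\dots,x_{k-1},z) \;\to\; (x_1,y_2,\dots,y_{k-1},z) \;\to\; (y_1,\dots,y_{k-1},z) \;\to\; y,
\]
justifying each arrow by exactly one of the four generating subtuples $\{x_1,\dots,x_{k-1}\}$, $\{x_1,z\}$, $\{y_2,z\}$, $\{y_1,\dots,y_{k-1}\}$. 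Your route reaches the same destination, but uses two separate applications of spread and several extra stops; the paper's single pivot $z$, chosen to generate with an element from \emph{each} tuple, is what collapses the argument.
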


\begin{proof}
Let $x = (x_1,\dots,x_k)$ and $y = (y_1,\dots,y_k)$ be two redundant generating $k$-tuples. By an elementary observation of Pak, it is sufficient to show that $x$ and $y$ are connected after permuting of the entries of $x$ and $y$. In particular, since $x$ and $y$ are redundant, we may assume that $\<x_1,\dots,x_{k-1}\> = \<y_1,\dots,y_{k-1}\> = G$ and also that $x_1 \neq 1 \neq y_2$. Since $s(G) \geq 2$, there exists $z \in G$ such that $\< x_1, z \>  = \< y_2, z \> = G$. We now make a series of connections. First, $x=x^{(1)}$ is connected to $x^{(2)} = (x_1,\dots,x_{k-1},z)$ as $\<x_1,\dots,x_{k-1}\> = G$. Next, $x^{(2)}$ is connected to $x^{(3)} = (x_1,y_2,\dots,y_{k-1},z)$ as $\<x_1,z\> = G$. Now, $x^{(3)}$ is connected to $x^{(4)} = (y_1,\dots,y_{k-1},z)$ as $\<y_2,z\> = G$. Finally, $x^{(4)}$ is connected to $(y_1,\dots,y_k) = y$  as $\<y_1,\dots,y_{k-1}\> = G$. This shows that $x$ is connected to $y$.
\end{proof}

Combining Lemma~\ref{lem:evans} with Theorem~\ref{thm:breuer_guralnick_kantor} shows that to prove Wiegold's conjecture, it suffices to show that for each finite simple group $G$ every irredundant generating $k$-tuple is connected in $\Gamma_k(G)$ to a redundant one.

As further evidence for the relevance of spread in this area, we note that Wiegold's original conjecture was (the a priori weaker claim) that a related graph $\Sigma_k(G)$ is connected for all finite simple groups $G$ and $k > d(G)$ (see \cite[Conjecture~2.5.4]{ref:Pak01}), but a short argument of Pak \cite[Proposition~2.5.13]{ref:Pak01} shows that $\Sigma_k(G)$ is connected if and only if $\Gamma_k(G)$ is connected, for all groups $G$ such that $s(G) \geq 2$, which by Theorem~\ref{thm:breuer_guralnick_kantor} includes all finite simple groups $G$.

\vspace{0.5\baselineskip}
 
\textbf{Application 3. The $\boldsymbol{\X}$-radical of a group. } In 1968, Thompson proved that a finite group is soluble if and only if all of its 2-generated subgroups are soluble \cite[Corollary~2]{ref:Thompson68}. The result follows from Thompson's classification of the finite insoluble groups all of whose proper subgroups are soluble, but, in 1995, Flavell gave a direct proof of this result \cite{ref:Flavell95}. Confirming a conjecture of Flavell \cite[Conjecture~B]{ref:Flavell01}, Guralnick, Kunyavsk\u{\i}i, Plotkin and Shalev proved the following result about the \emph{soluble radical} of $G$, written $R(G)$, which is the largest normal soluble subgroup of $G$ \cite[Theorem~1.1]{ref:GuralnickKunyavskiiPlotkinShalev06}. 

\begin{theorem} \label{thm:soluble_radical}
Let $G$ be a finite group. Then
\vspace{-2pt}
\[
R(G) = \{ x \in G \mid \text{$\<x,y\>$ is soluble for all $y \in G$} \}.
\]
\end{theorem}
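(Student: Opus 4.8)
The plan is to prove the two inclusions separately. Write $\X(G) = \{ x \in G \mid \<x,y\>\text{ is soluble for all }y \in G\}$ for the set on the right-hand side. The inclusion $R(G) \subseteq \X(G)$ is the easy direction: if $x \in R(G)$, then for any $y \in G$ the subgroup $\<x,y\>$ has $\<x,y\> \cap R(G)$ as a normal soluble subgroup with cyclic (indeed $\<y\>$-generated) quotient, hence $\<x,y\>$ is soluble. So the content is the reverse inclusion $\X(G) \subseteq R(G)$, equivalently: if $x \notin R(G)$, then there exists $y \in G$ with $\<x,y\>$ insoluble.

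For the hard direction I would argue by induction on $|G|$ and pass to $\overline{G} = G/R(G)$, which has trivial soluble radical; since $x \notin R(G)$, its image $\bar x$ is a nontrivial element of $\overline{G}$, and it suffices to find $\bar y$ with $\<\bar x, \bar y\>$ insoluble (any preimage $y$ then works). So we may assume $R(G) = 1$ and must find, for each $1 \neq x \in G$, some $y$ with $\<x,y\>$ insoluble. Now let $N = T_1 \times \cdots \times T_k$ be a minimal normal subgroup of $G$, with the $T_i$ nonabelian simple (nonabelian because $R(G)=1$), and let $C = C_G(N)$. If $x \notin C$, then $x$ induces a nontrivial automorphism of $N$; here the key step is to use the strong $\frac{3}{2}$-generation of simple groups — namely $u(T_i) \geq 2$ from Theorem~\ref{thm:breuer_guralnick_kantor}, together with the reduction machinery behind Theorem~\ref{thm:bgh_reduction} — to produce an element $y \in N$ such that $\<x,y\>$ surjects onto a nonabelian simple group (or at least onto $\<T_i, \text{(aut.\ induced by }x)\>$ for some $i$), hence is insoluble. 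If instead $x \in C$, then since $C \lhd G$ and $C \cap N = 1$ forces $C$ to be a proper normal subgroup, and $R(C) \leq R(G) = 1$, we can apply induction to $C$ (if $C \neq 1$) to find $y \in C$ with $\<x,y\>$ insoluble; and $C = 1$ cannot happen with $1 \neq x \in C$, so this case is vacuous or handled by induction.

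The main obstacle is the case $x \notin C_G(N)$: one must genuinely show that a single nontrivial element $x$ acting on $N = T^k$ can be completed to an insoluble subgroup using only a companion $y$ drawn from $N$ itself. This is exactly where $\frac{3}{2}$-generation of almost simple groups does the work — one restricts attention to the quotient $N\<x\>/C_{N\<x\>}(T_i)$, an almost simple group with socle $T_i$, applies the existence of a generating pair (Theorem~\ref{thm:guralnick_kantor}), or better the uniform spread bound, to find the right conjugate of a suitable element, and lifts it back. A secondary technical point is managing the diagonal action of $x$ that permutes the factors $T_i$ nontrivially, so that $\<x, y\>$ projects onto something insoluble rather than collapsing; handling the permutation part is precisely the role played by an element of the shape $s = (a,1,\dots,1)\sigma$ as in Theorem~\ref{thm:bgh_reduction}, and one invokes that circle of ideas to finish.
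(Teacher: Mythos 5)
Your skeleton is close to how the paper itself handles this circle of results (it proves the more general Theorem~\ref{thm:x_radical}, whose Corollary~\ref{cor:x_radical_soluble} contains the present statement): the easy inclusion is fine, the reduction to $R(G)=1$ by induction is fine, and your treatment of the case $x \in C_G(N)$ by inducting into the proper normal subgroup $C_G(N)$ (using $C_G(N) \cap N = Z(N) = 1$ and $R(C_G(N)) \leq R(G) = 1$) is a perfectly good alternative to the paper's reduction to $\<x^G\> = G$ followed by a Schur--Zassenhaus argument. In the sub-case where $x$ normalises every factor and acts nontrivially on some $T_i$, your plan is also essentially correct, except for the citation: the element $\bar{x}$ of the almost simple group $A = N\<x\>/C_{N\<x\>}(T_i)$ typically lies outside the socle, so Theorem~\ref{thm:guralnick_kantor} and Theorem~\ref{thm:breuer_guralnick_kantor} (which concern simple groups) do not literally apply; what is needed is the almost simple strengthening --- for every nontrivial $\bar{x} \in A = \< T, \bar{x} \>$ there is $y$ in the socle $T$ with $\<\bar{x},y\> \geq T$ --- which is exactly the ``strong version'' of Theorem~\ref{thm:guralnick_kantor} the paper alludes to (and which underlies the $u \geq 2$ results for almost simple groups).

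The genuine gap is the sub-case where $x$ permutes the factors $T_1, \dots, T_k$ nontrivially. Here you only gesture at ``the reduction machinery behind Theorem~\ref{thm:bgh_reduction}'', but that theorem is the wrong tool: it manufactures a \emph{witnessing} element $s = (a,1,\dots,1)\sigma$ of a very particular shape and shows that $s^{\<T^k,s\>}$ witnesses uniform spread; in the present problem the troublesome element $x$ is the arbitrary \emph{given} element (the ``$x$'' of the spread definition, not the witness), and nothing in Theorem~\ref{thm:bgh_reduction} produces a companion for an arbitrary $x$ that merely permutes factors. An actual argument is needed, and the paper's proof of Theorem~\ref{thm:x_radical} shows one way: first reduce to $x$ of prime order (legitimate here since $R(G)=1$, so any nontrivial power $x'$ of $x$ also lies outside $R(G)$ and $\<x',y\> \leq \<x,y\>$); let $M \cong T^l$ be the product of a nontrivial $\<x\>$-orbit of factors, so $\<x\>$ acts regularly on the $l$ factors and $C_{\<M,x\>}(M) = Z(M) = 1$, whence $M$ is the unique minimal normal subgroup of $H = \<M,x\>$ and every proper quotient of $H$ is cyclic; then Theorem~\ref{thm:burness_guralnick_harper} applied to $H$ gives $m$ with $\<x,m\> = H \geq M$, which is insoluble. (The original Guralnick--Kunyavski\u{\i}--Plotkin--Shalev proof instead combines the almost simple $\frac{3}{2}$-generation result with an explicit wreath-product argument.) Without some such step the factor-permuting case --- which is unavoidable, since a minimal normal subgroup of a group with trivial radical is generally a nontrivial power $T^k$ --- remains unproven, and it is precisely the point your outline leaves to a gesture.
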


The key (and only CFSG-dependent) element of the proof of Theorem~\ref{thm:soluble_radical} is a strong version Theorem~\ref{thm:guralnick_kantor} on the $\frac{3}{2}$-generation of finite simple groups. To paint a picture of how the $\frac{3}{2}$-generation of simple groups plays the starring role, we first present the analogue for simple Lie algebras \cite[Theorem~2.1]{ref:GuralnickKunyavskiiPlotkinShalev06}. Recall that the \emph{radical} of a Lie algebra $\g$, denoted $R(\g)$, is the largest soluble ideal of $\mathfrak{g}$.

\begin{theorem} \label{thm:soluble_radical_lie_algebras} 
Let $\g$ be a finite-dimensional Lie algebra over $\mathbb{C}$. Then
\vspace{-2pt}
\[
R(\g) = \{ x \in \g \mid \text{$\<x,y\>$ is soluble for all $y \in \g$} \}.
\]
\end{theorem}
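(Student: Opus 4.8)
The plan is to mirror the group-theoretic statement (Theorem~\ref{thm:soluble_radical}) in the Lie algebra setting, with the $\frac{3}{2}$-generation of simple Lie algebras (Theorem~\ref{thm:ionescu}) playing the role that Theorem~\ref{thm:guralnick_kantor} plays there. Write $Z(\g)$ for the set $\{ x \in \g \mid \text{$\<x,y\>$ is soluble for all $y \in \g$} \}$, the Lie-algebraic analogue of the ``soluble radical by generation'' set. One inclusion is immediate: since $R(\g)$ is a soluble ideal, for any $x \in R(\g)$ and $y \in \g$ the subalgebra $\<x,y\>$ is contained in $R(\g) + \F y$, which is soluble (being an extension of the soluble ideal $R(\g)$ by an at-most-one-dimensional, hence abelian, quotient), so $R(\g) \subseteq Z(\g)$.

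For the reverse inclusion, the first step is the standard reduction to the semisimple case. Passing to the quotient $\bar\g = \g / R(\g)$, which is semisimple, one checks that $Z(\g)$ maps into the analogous set $Z(\bar\g)$: if $x \in Z(\g)$ then for every $\bar y \in \bar\g$, lifting to $y \in \g$, the image of the soluble subalgebra $\<x,y\>$ is a soluble subalgebra of $\bar\g$ containing $\bar x$ and $\bar y$, so $\bar x \in Z(\bar\g)$. Hence it suffices to prove that $Z(\bar\g) = 0$ when $\bar\g$ is semisimple, i.e.\ that a nonzero element $x$ of a semisimple Lie algebra always generates a non-soluble subalgebra together with some $y$. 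Decomposing $\bar\g = \g_1 \oplus \cdots \oplus \g_m$ into simple ideals and writing $x = x_1 + \cdots + x_m$, pick an index $i$ with $x_i \neq 0$; by Theorem~\ref{thm:ionescu} applied to the simple algebra $\g_i$ there is $y_i \in \g_i$ with $\<x_i, y_i\> = \g_i$. Taking $y = y_i \in \bar\g$, the subalgebra $\<x, y\>$ surjects onto $\<x_i, y_i\> = \g_i$ under the projection to the $i$-th factor (the other factors contribute nothing to brackets involving $y_i$ since the $\g_j$ commute), so $\<x,y\>$ is non-soluble. This shows $Z(\bar\g) = 0$, hence $Z(\g) \subseteq R(\g)$, completing the argument.

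The one genuinely delicate point — and where I expect the real work to sit — is the claim that the ``soluble generation'' set passes cleanly to quotients and that the projection argument does what I want; concretely, that $\<x, y_i\>$ really surjects onto $\g_i$. This needs the elementary but not entirely trivial observation that in $\g_1 \oplus \cdots \oplus \g_m$ any subalgebra containing $x$ and $y_i$ contains all iterated brackets $[\cdots[[y_i, x], x] \cdots]$, whose $\g_i$-components already generate $\g_i$ because $\<x_i, y_i\> = \g_i$ and brackets respect the direct-sum decomposition. One should also be slightly careful over which ground field is in play: Theorem~\ref{thm:ionescu} is stated over $\mathbb{C}$, which is exactly the hypothesis of Theorem~\ref{thm:soluble_radical_lie_algebras}, so no base-change issue arises, but the proof does use in an essential way that the simple factors of $\bar\g$ are $\frac{3}{2}$-generated — this is the sole appeal to Theorem~\ref{thm:ionescu}, just as the group-theoretic proof's sole appeal to the CFSG is through Theorem~\ref{thm:guralnick_kantor}.
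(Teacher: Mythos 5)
Your proof is correct and follows essentially the same route as the paper: the forward inclusion via a soluble-extension argument, and the reverse inclusion by passing to the semisimple quotient $\g/R(\g)$ (the paper frames this as a minimal counterexample, you quotient directly, which is an immaterial difference), decomposing into simple ideals and applying Theorem~\ref{thm:ionescu} to a factor with $x_i \neq 0$. The point you flag as delicate is in fact immediate: the projection onto $\g_i$ is a Lie algebra homomorphism, so the image of $\<x,y\>$ is a subalgebra of $\g_i$ containing $x_i$ and $y_i$ and hence equals $\g_i$ — exactly the paper's observation that $\g_i$ is a quotient of $\<x,y\>$, so no iterated-bracket argument is needed.
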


\begin{proof}
Let $x \in \g$. First assume that $x \in R(\g)$. Let $y \in G$ and let $\mathfrak{h}$ be the smallest ideal of $\<x,y\>$ containing $x$. Now $\mathfrak{h}$ is soluble as it is a Lie subalgebra of $R(\g)$, and $\<x,y\>/\mathfrak{h}$ is soluble as it is $1$-dimensional, so $\<x,y\>$ is soluble.

Now assume that $\< x, y \>$ is soluble for all $y \in G$. We will prove that $x \in R(\g)$. For a contradiction, assume that $\g$ is a minimal counterexample (by dimension). Consider $\overline{\g} = \g/R(\g)$. Then $\< \overline{x}, \overline{y} \>$ is soluble for all $y \in \mathfrak{g}$, and $R(\overline{\g})$ is trivial, so $\overline{x} \not\in R(\overline{\g})$. Therefore, $R(\g)=0$, by the minimality of $\g$. This means that $\g$ is semisimple, so we may write $\g = \g_1 \oplus \cdots \oplus \g_k$ where $\g_1, \dots, \g_k$ are simple. Writing $x = (x_1, \dots, x_k)$, fix $1 \leq i \leq k$ such that $x_i \neq 0$. Then by Theorem~\ref{thm:ionescu}, there exists $y \in \g_i$ such that $\<x_i,y\> = \g_i$. In particular, $\g_i$ is a quotient of $\<x,y\>$, so $\<x,y\>$ is not soluble, which is a contradiction.
\end{proof}

Returning to groups, again using variants of Theorem~\ref{thm:guralnick_kantor}, Guralnick, Plotkin and Shalev set Theorem~\ref{thm:soluble_radical} in a more general context \cite[Theorem~6.1]{ref:GuralnickPlotkinShalev07}. Here we give a short proof of their result by applying Theorem~\ref{thm:burness_guralnick_harper}.

Let $\X$ be a class of finite groups that is closed under subgroups, quotients and extensions. The $\X$-radical of a group $G$, denoted $\X(G)$, is the largest normal $\X$-subgroup of $G$. For instance, $\X(G) = R(G)$ if $\X$ is the class of soluble groups.

\begin{theorem} \label{thm:x_radical}
Let $\X$ be a class of finite groups that is closed under subgroups, quotients and extensions. Then
\vspace{-2pt}
\[
\X(G) = \{ x \in G \mid \text{$\<x^{\<y\>}\>$ is an $\X$-group for all $y \in G$} \}.
\]
\end{theorem}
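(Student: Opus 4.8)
The plan is to mimic the structure of the proof of Theorem~\ref{thm:soluble_radical_lie_algebras}, replacing Ionescu's theorem on simple Lie algebras with Theorem~\ref{thm:burness_guralnick_harper} (or more precisely the $s(G)\geq 1$ part, which holds for any finite group all of whose proper quotients are cyclic). Write $R_{\X} = \{ x \in G \mid \text{$\<x^{\<y\>}\>$ is an $\X$-group for all $y \in G$} \}$ for the right-hand side. The containment $\X(G) \subseteq R_{\X}$ is the easy direction: if $x \in \X(G)$ and $y \in G$, then $x^{\<y\>} \subseteq \X(G)$ since $\X(G)$ is normal, so $\<x^{\<y\>}\>$ is a subgroup of the $\X$-group $\X(G)$, hence an $\X$-group because $\X$ is subgroup-closed. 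For the reverse containment I would argue by induction on $|G|$, taking $G$ a minimal counterexample and $x \in R_{\X} \setminus \X(G)$.

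The first reduction is to pass to $\overline{G} = G/\X(G)$: one checks $\overline{x} \in R_{\X}(\overline{G})$ (the defining property descends to quotients since $\X$ is quotient-closed, and $\<\overline{x}^{\<\overline{y}\>}\>$ is a quotient of a subgroup of $\<x^{\<y\>}\>$), and $\X(\overline{G}) = 1$ because $\X$ is extension-closed. So by minimality we may assume $\X(G) = 1$ and must derive a contradiction from the existence of $1 \neq x \in R_{\X}$. The next step is to locate a minimal normal subgroup $N$ of $G$; since $\X(G)=1$, $N$ is not in $\X$, so (as $\X$ contains all abelian groups, being subgroup- and extension-closed and containing the trivial group — actually one needs $\X$ nontrivial; if $\X = \{1\}$ the statement is about the trivial subgroup and is easy, so assume otherwise) $N = T_1 \times \cdots \times T_k$ is a product of copies of a nonabelian simple group $T$. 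The idea is now to produce, from the hypothesis $1 \neq x \in R_{\X}$, some $y \in G$ for which $\<x^{\<y\>}\>$ surjects onto a nonabelian simple group, contradicting that it lies in $\X$.

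The main obstacle — and the place where spread rather than mere $\frac{3}{2}$-generation is needed — is that $x$ need not lie in $N$, nor even act nontrivially on a single factor $T_i$ in a controllable way; what one really wants is to apply a spread statement inside an almost simple group $\<T, a\>$ (or inside $G$ acting on the relevant section) to find $y$ so that $x$ and $y$ together generate a section with a nonabelian composition factor, and then observe $\<x^{\<y\>}\> \trianglelefteqslant \<x,y\>$ has the same nonabelian composition factors as $\<x,y\>/\<y\>$-type quotients force. Concretely, I would consider the subgroup $\<x, N\>$ and the action of $x$ on $N$; replacing $G$ by a suitable subgroup containing $x$ and with all proper quotients cyclic (or by handling the permutation action of $x$ on the factors $T_i$ and then a diagonal-type argument), apply Theorem~\ref{thm:burness_guralnick_harper} to get $y$ with $\<\bar x, \bar y\>$ the whole relevant almost simple section. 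The delicate bookkeeping is exactly the content of \cite[Theorem~6.1]{ref:GuralnickPlotkinShalev07} and the author's promised short proof via Theorem~\ref{thm:burness_guralnick_harper}: the key input is that every finite group all of whose proper quotients are cyclic has spread at least $1$, which lets one choose the complementary element $y$ even after $x$ has been constrained by the requirement that $\<x^{\<y\>}\>$, and not just $\<x,y\>$, be forced out of $\X$. Once such a $y$ is found, $\<x^{\<y\>}\>$ is normal in $\<x,y\>$ with cyclic quotient, so it contains the derived subgroup and hence every nonabelian composition factor of $\<x,y\>$; since $\<x,y\>$ has a nonabelian simple quotient (or section) by construction, $\<x^{\<y\>}\>$ is not an $\X$-group, the desired contradiction.
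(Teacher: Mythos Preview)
Your overall architecture matches the paper's: the easy containment, induction via a minimal counterexample, the reduction to $\X(G)=1$, and the eventual appeal to Theorem~\ref{thm:burness_guralnick_harper} applied to a group of the form $\langle M,x\rangle$ with $M$ a product of simple factors permuted transitively by $x$. The final observation that $\langle x^{\langle y\rangle}\rangle \trianglelefteqslant \langle x,y\rangle$ with cyclic quotient, hence contains every nonabelian composition factor, is exactly what the paper uses.

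However, there is a genuine gap. Your claim that a nontrivial class $\X$ closed under subgroups, quotients and extensions must contain all abelian groups is false: the class of finite $p$-groups for a fixed prime $p$ is such a class and contains no $C_q$ for $q\neq p$. So you cannot conclude directly that the minimal normal subgroup $N$ is nonabelian. The paper deals with the possibility $N\cong C_p^k$ by a sequence of subcases: if $x\in N$ then $G=N$ is simple and one finishes by $\tfrac{3}{2}$-generation; if $x$ centralises $N$ then $N$ is central cyclic and a Schur--Zassenhaus splitting argument (using that $C_p\notin\X$ forces $p\nmid |G/N|$) gives a contradiction; if $x$ acts nontrivially on abelian $N$ then some commutator $[x,n]$ generates a copy of $T\notin\X$ inside $\langle x^{\langle n\rangle}\rangle$. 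None of these steps is difficult, but none is covered by your sketch.

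You also omit two reductions the paper makes before introducing $N$: passing to $H=\langle x^G\rangle$ to arrange $\langle x^G\rangle=G$, and replacing $x$ by a prime-order power. The first is what forces $G=N$ when $x\in N$ and forces $N$ central when $x$ centralises $N$; the second ensures that in the nonabelian case $x$ permutes the simple factors of $M$ either trivially or as a single $\ell$-cycle, so that $\langle M,x\rangle$ really does have a unique minimal normal subgroup and hence only cyclic proper quotients, which is precisely the hypothesis needed to invoke Theorem~\ref{thm:burness_guralnick_harper}. Without these reductions your proposed application of the spread theorem is not justified.
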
 

\begin{corollary} \label{cor:x_radical}
Let $\X$ be a class of finite groups that is closed under subgroups, quotients and extensions. Then $G$ is an $\X$-group if and only if every $2$-generated subgroup of $G$ is an $\X$-group.
\end{corollary}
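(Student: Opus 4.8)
The plan is to prove Theorem~\ref{thm:x_radical} and then read off Corollary~\ref{cor:x_radical}. Write $L(G)$ for the set on the right-hand side of the displayed identity. The inclusion $\X(G)\subseteq L(G)$ is routine: if $x\in\X(G)$ and $y\in G$, then $x^{\<y\>}\subseteq\X(G)$ since $\X(G)\leqn G$, so $\<x^{\<y\>}\>\leq\X(G)$; as $\X(G)$ is an $\X$-group and $\X$ is closed under subgroups, $\<x^{\<y\>}\>$ is an $\X$-group. For the reverse inclusion $L(G)\subseteq\X(G)$, the first move is the observation that it is enough to prove $\<x^G\>\in\X$ for every $x\in L(G)$: then $\<x^G\>$ is a normal $\X$-subgroup of $G$, so $\<x^G\>\leq\X(G)$ and in particular $x\in\X(G)$.

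I would prove the implication ``$x\in L(G)\Rightarrow\<x^G\>\in\X$'' by induction on $|G|$, splitting on whether the normal closure $N:=\<x^G\>$ is proper. If $N\ne G$, then $x$ also lies in $L(N)$ for the smaller group $N$ --- the subgroups $\<x^{\<y\>}\>$ with $y\in N$ are the same whether computed inside $N$ or inside $G$ --- so by induction $x\in\X(N)$; since $\X(N)$ is characteristic in $N$ and $N\leqn G$, we have $\X(N)\leqn G$, whence $N=\<x^G\>\leq\X(N)\leq N$ and so $N=\X(N)\in\X$. If $N=G$, so that $x$ lies in no proper normal subgroup of $G$, then $x$ is not an isolated vertex of the generating graph $\Gamma(G)$: this is where Theorem~\ref{thm:burness_guralnick_harper} is used, via the refinement of Theorem~\ref{thm:generating_graph} that identifies the isolated vertices of $\Gamma(G)$ as exactly the non-trivial elements lying in some proper normal subgroup $K$ with $G/K$ non-cyclic. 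Hence there is $y\in G$ with $\<x,y\>=G$, so $\<x^{\<y\>}\>=\<x^G\>=G$, and since $x\in L(G)$ this forces $G\in\X$, completing the induction.

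The case $N=G$ is the crux, and the only point where anything beyond the closure properties of $\X$ is used. Its soft core is visible by analogy with the Lie algebra argument sketched earlier: after reducing to $\X(G)=1$ --- legitimate, since if $\X(G)\ne1$ one applies the induction hypothesis to $G/\X(G)$ and uses closure under extensions --- one would like $G$ to be characteristically simple, and if it is, then $G=T^{k}$ for a nonabelian simple group $T\notin\X$, the hypothesis $\<x^G\>=G$ forces every coordinate of $x$ to be non-trivial, and $\frac{3}{2}$-generation of $T$ (Theorem~\ref{thm:guralnick_kantor}) yields $y$ with $\<x^{\<y\>}\>$ a subdirect subgroup of $T^{k}$, hence admitting $T$ as a quotient --- contradicting $x\in L(G)$. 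What $\X(G)=1$ does not guarantee is that $G$ is characteristically simple, and bridging from that case to an arbitrary $G$ with $\<x^G\>=G$ (which need not itself be $\frac{3}{2}$-generated) is precisely the work done by Theorem~\ref{thm:burness_guralnick_harper} in producing the generating pair $\<x,y\>=G$; this is the hard part.

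Finally, Corollary~\ref{cor:x_radical} is immediate: if $G\in\X$ then every subgroup of $G$, in particular every $2$-generated one, lies in $\X$ by closure under subgroups; conversely, if every $2$-generated subgroup of $G$ lies in $\X$, then for all $x,y\in G$ we have $\<x^{\<y\>}\>\leq\<x,y\>\in\X$, so $\<x^{\<y\>}\>\in\X$, and therefore $x\in L(G)=\X(G)$ by Theorem~\ref{thm:x_radical} --- as this holds for every $x\in G$, we conclude $G=\X(G)\in\X$.
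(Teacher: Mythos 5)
Your closing paragraph, which deduces Corollary~\ref{cor:x_radical} from Theorem~\ref{thm:x_radical}, is exactly the paper's own proof of the corollary and is correct. The problem lies in the proof of Theorem~\ref{thm:x_radical} on which your proposal explicitly rests. The reduction to showing $\<x^G\> \in \X$, the case $\<x^G\> \neq G$ (inducting on the normal closure and using that $\X(N)$ is characteristic in $N$), and the subdirect-subgroup argument for characteristically simple $G$ are all fine. The gap is the crux case $\<x^G\> = G$: you invoke a ``refinement'' of Theorem~\ref{thm:generating_graph} asserting that the isolated vertices of $\Gamma(G)$ are precisely the nontrivial elements lying in a proper normal subgroup with noncyclic quotient, i.e.\ that $\<x^G\> = G$ forces $x$ to lie in a generating pair. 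No such refinement appears in the paper, it does not follow from Theorem~\ref{thm:burness_guralnick_harper} (which concerns groups all of whose proper quotients are cyclic, not individual elements of an arbitrary group), and it is false. Take $G = A_5^{20}$: by P.~Hall \cite{ref:Hall36}, $A_5^k$ is $2$-generated only for $k \leq 19$, so $\Gamma(G)$ has no edges and every vertex is isolated; yet any $x$ all of whose $20$ coordinates are nontrivial satisfies $\<x^G\> = G$ (normal subgroups of $T^k$ are subproducts of the factors), so $x$ lies in no proper normal subgroup at all. Hence the element $y$ with $\<x,y\> = G$ that your case $N = G$ requires need not exist, and the induction does not close. (The theorem itself survives this example: choosing $y$ supported in a single factor with $\<x_1,y_1\> = A_5$ makes $\<x^{\<y\>}\>$ surject onto $A_5$.)

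The paper bridges exactly this point differently. It runs a minimal-counterexample argument: reduce to $x$ of prime order with $\X(G)=1$ and $\<x^G\> = G$, take a minimal normal subgroup $N = T^k$, rule out $x \in N$ (this is where Theorem~\ref{thm:guralnick_kantor} is used), rule out $x$ centralising $N$ and $N$ abelian, and then apply Theorem~\ref{thm:burness_guralnick_harper} not to $G$ but to the subgroup $\<M,x\>$, where $M \cong T^l \leq N$ is a product of simple factors permuted transitively by $x$. Since $M$ is the unique minimal normal subgroup of $\<M,x\>$, every proper quotient of $\<M,x\>$ is cyclic, so there exists $m$ with $\<m,x\> = \<M,x\>$, and then $\<x^{\<m\>}\> = \<M,x\>$ is not an $\X$-group, the desired contradiction. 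In short, the generating pair must be manufactured inside a carefully chosen $2$-generated subgroup containing $x$, not inside $G$ itself; if you want to retain your induction, that is the step to replace.
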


\begin{proof}
If $G$ is an $\X$-group, then every $2$-generated subgroup is. Conversely, if for all $x,y \in G$ the subgroup $\<x,y\>$ is an $\X$-group, then so is $\<x^{\<y\>}\>$, so, by Theorem~\ref{thm:x_radical}, $x \in \X(G)$, which shows $G = \X(G)$, which is an $\X$-group.
\end{proof}

\begin{corollary} \label{cor:x_radical_soluble}
Let $\X$ be a class of finite groups that is closed under subgroups, quotients and extensions. Assume that $\X$ contains all soluble groups. Then
\vspace{-2pt}
\[
\X(G) = \{ x \in G \mid \text{$\<x, y\>$ is an $\X$-group for all $y \in G$} \}.
\]
\end{corollary}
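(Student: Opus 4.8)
The plan is to derive this as a quick consequence of Theorem~\ref{thm:x_radical}, using the extra hypothesis that $\X$ contains all soluble groups to replace the normal closure $\<x^{\<y\>}\>$ appearing there with the full subgroup $\<x,y\>$. Write $S$ for the set on the right-hand side of the displayed equation, and write $S'$ for the set $\{ x \in G \mid \<x^{\<y\>}\> \text{ is an }\X\text{-group for all } y \in G\}$ from Theorem~\ref{thm:x_radical}, so that $\X(G) = S'$ and it suffices to prove $S = S'$.

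First I would show $S' \subseteq S$. If $x \in S'$ and $y \in G$, then $N = \<x^{\<y\>}\>$ is an $\X$-group, and $N$ is normal in $\<x,y\>$ since it is generated by $\<y\>$-conjugates of $x$ and is visibly $\<x\>$-invariant. The quotient $\<x,y\>/N$ is generated by the image of $y$, hence cyclic, hence soluble, hence an $\X$-group by hypothesis. Since $\X$ is closed under extensions, $\<x,y\>$ is an $\X$-group, so $x \in S$. For the reverse inclusion $S \subseteq S'$: if $x \in S$ and $y \in G$, then $\<x,y\>$ is an $\X$-group, and $\<x^{\<y\>}\> \leq \<x,y\>$, so it is an $\X$-group because $\X$ is closed under subgroups; thus $x \in S'$. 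Combining the two inclusions gives $S = S' = \X(G)$.

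This argument is entirely formal and presents no real obstacle — all the work has been offloaded onto Theorem~\ref{thm:x_radical}, whose proof (via Theorem~\ref{thm:burness_guralnick_harper}) is where the content lies. The only point requiring a moment's care is the normality of $\<x^{\<y\>}\>$ in $\<x,y\>$ and the identification of the quotient as cyclic; both are standard. One could also present the corollary more symmetrically by noting that the soluble-group hypothesis makes the conditions "$\<x^{\<y\>}\>$ is an $\X$-group for all $y$" and "$\<x,y\>$ is an $\X$-group for all $y$" equivalent pointwise in $x$, which is exactly the displayed chain of implications above.
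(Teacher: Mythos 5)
Your proof is correct and follows essentially the same route as the paper: both directions reduce to Theorem~\ref{thm:x_radical} using closure under subgroups one way and, the other way, the fact that $\<x,y\>$ is an extension of the normal subgroup $\<x^{\<y\>}\>$ by a cyclic (hence soluble, hence $\X$-) group.
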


\begin{proof}
Let $x \in G$. If for all $y \in G$, $\<x,y\>$ is an $\X$-group, then so is $\<x^{\<y\>}\>$, so, by Theorem~\ref{thm:x_radical}, $x \in \X(G)$. Conversely, if $x \in \X(G)$, then $\<x^{\<y\>}\> \leq \<x^G\> \leq \X(G)$ is an $\X$-group, so $\<x,y\>$, an extension of $\<x^{\<y\>}\>$ by a cyclic group, is an $\X$-group.
\end{proof}

\begin{proof}[Proof of Theorem~\ref{thm:x_radical}]
Let $x \in G$. First assume that $x \in \X(G)$. For all $y \in G$, we have $\<x^{\<y\>}\> \leq \<x^G\> \leq \X(G)$, so $\<x^{\<y\>}\>$ is an $\X$-group.

Now assume that $\< x^{\<y\>} \>$ is an $\X$-group for all $y \in G$. We will prove that $x \in \X(G)$. For a contradiction, assume that $G$ is a minimal counterexample. Consider $\overline{G} = G/\X(G)$. Then $\< \overline{x}^{\<\overline{y}\>} \>$ is an $\X$-group for all $y \in G$ (as $\X$ is closed under quotients), and $\X(\overline{G})$ is trivial (as $\X$ is closed under extensions), so $\overline{x} \not\in \X(\overline{G})$. Therefore, $\X(G)=1$, by the minimality of $G$. Now consider $H = \<x^G\>$. Then $\<x^{\<h\>}\>$ is an $\X$-group for all $h \in H$, and $\X(H) \leq \X(G)$ (as $\X(H)$ is characteristic in $H$ so normal in $G$), so $x \not\in \X(H)$. Therefore, $\<x^G\> = G$, by the minimality of $G$. Consider a power $x'$ of $x$ of prime order. Then $\< x'^{\<y\>} \>$ is an $\X$-group for all $y \in G$ (as $\X$ is closed under subgroups), and $x' \not\in 1 = \X(G)$. Therefore, it suffices to consider the case where $x$ has prime order. 

Let $N$ be a minimal normal subgroup of $G$ and write $N = T^k$ where $T$ is simple. Observe that $N$, or equivalently $T$, is not an $\X$-group, since $\X(G)=1$.

Suppose that $x \in N$. Then $G = N$ since $\<x^G\> = G$. In particular, $k=1$, so, by Theorem~\ref{thm:guralnick_kantor}, there exists $y \in G$ such that $\<x,y\>=G$, which is not an $\X$-group, so $\<x^{\<y\>}\>$ is not an $\X$-group either: a contradiction. Therefore, $x \not\in N$.

Suppose that $x$ centralises $N$. Then $N$ is central since $\<x^G\> = G$, so $N \cong C_p$ for a prime $p$. In $\widetilde{G} = G/N$, $\widetilde{x}$ is nontrivial as $x \not\in N$ and $\<\widetilde{x}^{\<\widetilde{y}\>}\>$ is an $\X$-group for all $y \in G$ (as $\<x^{\<y\>}\>$ is). The minimality of $G$ means $\widetilde{x} \in \X(\widetilde{G})$, but $\<\widetilde{x}^{\widetilde{G}}\> = \widetilde{G}$ (as $\<x^G\> = G$), so $\widetilde{G}$ is an $\X$-group. Since $N \cong C_p$ is not an $\X$-group, $p$ does not divide $|\widetilde{G}|$. By the Schur--Zassenhaus Theorem, $G = N \times H$ for some $H \cong \widetilde{G}$. Since $N \cong C_p$ is not an $\X$-group and $\<x\>$ is an $\X$-group, $p$ does not divide $|x|$, so $x \in H$, contradicting $\< x^G \> = G$. Therefore, $x$ acts nontrivially on $N$.

Suppose that $N$ is abelian. As $x$ acts nontrivially on $N$, there is $n \in N$ with $[x,n] \neq 1$. Now $\< [x,n] \>$ is isomorphic to $T$, as $[x,n] \in N$, so it is not an $\X$-group, implying that $\< x^{\<n\>} \>$ is not an $\X$-group either. Therefore, $N$ is nonabelian.

Now $x$ permutes the $k$ factors of $N \cong T^k$ and let $M \cong T^l$ be a nontrivial subgroup of $N$ whose factors are permuted transitively by $x$. If $l=1$, then $\< M, x \>$ is almost simple, and if $l > 1$, then, recalling that $x$ has prime order, $\<x\> \cong C_l$ acts regularly on the factors of $M$. In either case, $M$ is the unique minimal normal subgroup of $\<M,x\>$, so every proper quotient of $\<M,x\>$ is cyclic. Therefore, by Theorem~\ref{thm:burness_guralnick_harper}, there exists $m \in \< M, x \>$ such that $\< m, x \> = \< M, x\>$. Moreover, $\<x^{\<m\>}\>$, being a normal subgroup of $\< M, x \>$ containing $x$, is also $\< M, x\>$. However, $\<x^{\<m\>}\> = \< M, x \>$ is not an $\X$-group since the subgroup $T$ is not an $\X$-group. This contradiction completes the proof.
\end{proof}

\section{Infinite Groups} \label{s:infinite}

\subsection{Generating infinite groups} \label{ss:infinite_intro}

We now turn to infinite groups and their generating pairs. Do the results from Section~\ref{s:finite} on the spread of finite groups extend to infinite groups? Let us recall that the motivating theorem for finite groups is the landmark result that every finite simple group is $2$-generated. This result is easily seen to be false when the assumption of finiteness is removed. For example, the alternating group $\Alt(\Int)$ is simple but is not finitely generated since every finite subset of $\Alt(\Int)$ is supported on finitely many points and therefore generates a finite subgroup. The problem persists even if we restrict to finitely generated simple groups. Answering a question of Wiegold in the Kourovka Notebook \cite[Problem~6.44]{ref:Kourovka22}, in 1982, Guba constructed a finitely generated infinite simple group that is not 2-generated (in fact, the group constructed has the property that every 2-generated subgroup is free) \cite{ref:Guba82}. More recently, Osin and Thom, by studying the $\ell^2$-Betti number of groups, proved that for every $k \geq 2$ there exists an infinite simple group that is $k$-generated but not $(k-1)$-generated \cite[Corollary~1.2]{ref:OsinThom13}. 

With these results in mind, it makes sense to focus on $2$-generated groups and ask whether the results about the spread of finite $2$-generated groups extend to general $2$-generated groups. Recall that Theorem~\ref{thm:burness_guralnick_harper} gives a characterisation of the finite $\frac{3}{2}$-generated groups: a finite group $G$ is $\frac{3}{2}$-generated if and only if every proper quotient of $G$ is cyclic. In particular, every finite simple group is $\frac{3}{2}$-generated. The following example due to Cox in 2022 \cite{ref:Cox22}, highlights that this characterisation does not extend to general $2$-generated groups (that is, there exists a infinite $2$-generated group $G$ that is not $\frac{3}{2}$-generated but for which every proper quotient of $G$ is cyclic).

\begin{example} \label{ex:cox}
For each positive integer $n$, let $G_n$ be the subgroup of $\Sym(\Int)$ defined as $\< \Alt(\Int), t^n \>$ where $t\: \Int \to \Int$ is the translation $x \mapsto x+1$. 

It is straightforward to show that $\Alt(\Int)$ is the unique minimal normal subgroup of $G_n$, so every proper quotient of $G$ is cyclic. 

In addition, $G_n$ is $2$-generated. Indeed, by \cite[Lemma~3.7]{ref:Cox22}, $G_n = \< a_n, t^n \>$ for $a_n = \prod_{i=0}^{m+1} x_i^{t^{3ni}}$ where $x_0, \dots, x_{m+1} \in A_{3n}$ satisfy $x_0 = (1 \, 3)$, $x_{m+1} = (2 \, 3)$ and $\{ (1 \, 2 \, 3)^{x_1}, \dots, (1 \, 2 \, 3)^{x_m} \} = (1 \, 2 \, 3)^{A_{3n}}$. To see this, we note that $[a_n^{t^{-3n(m+1)}},a_n] = [x_{m+1},x_0] = (1 \, 2 \, 3)$ and $(1 \, 2 \, 3)^{a_n^{t^{-3ni}}} = (1 \, 2 \, 3)^{x_i}$, so $\< a_n, t^n \> \geq \< (1 \, 2 \, 3)^{A_{3n}}, t^n \> = \< A_{3n}, t^n \>$, which is simply $\< \Alt(\Int), t^n \>$ (compare with Lemma~\ref{lem:covering_symmetric} below). 

However, in \cite[Theorem~4.1]{ref:Cox22}, Cox proves that if $n \geq 3$, then $(1 \, 2 \, 3)$ is not contained in a generating pair for $G_n$, so $G_n$ gives an example of a $2$-generated group all of whose proper quotients are cyclic but which is not $\frac{3}{2}$-generated. To simplify the proof, we will assume that $n \geq 4$. Let $g \in G_n$. If $g \in \Alt(\Int)$, then $\< (1 \, 2 \, 3), g \> \leq \Alt(\Int) < G_n$. Now assume that $g \not\in \Alt(\Int)$, so $\<g\> = \<ht^k\>$ where $h \in \Alt(\Int)$ and $k \geq 4$. It is easy to show that $g$ has exactly $k$ infinite orbits $O_1, \dots, O_k$ (indeed, if $\mathrm{supp}(h) \subseteq [a,b]$, then we quickly see that for a suitable permutation $\pi \in \Sm{k}$, we can find $k$ orbits $O_1, \dots, O_k$ of $g$ satisfying $O_i \setminus [a,b] = \{ x > b \mid x \equiv i \mod{k} \} \cup \{ x < a \mid x \equiv i\pi \mod{k} \}$). Since $k \geq 4$, we can fix $i$ such that $O_i \cap \{1, 2, 3\} = \emptyset$, so $O_i$ is an orbit of $\< (1 \, 2 \, 3), g \>$, which implies that $\< (1 \, 2 \, 3), g \> \neq G_n$ in this case too.

In contrast, in \cite[Theorem~6.1]{ref:Cox22}, Cox shows that $G_1$ and $G_2$ are $\frac{3}{2}$-generated, and, in fact, $2 \leq u(G_i) \leq s(G_i) \leq 9$ for $i \in \{1,2\}$.
\end{example}

The groups in Example~\ref{ex:cox} are not simple, so the following question remains.

\begin{question} \label{que:infinte_0}
Does there exist a $2$-generated simple group $G$ with $s(G)=0$?
\end{question}

Recall that for finite groups $G$, Theorem~\ref{thm:burness_guralnick_harper} also establishes that $s(G) \geq 1$ if and only if $s(G) \geq 2$, so there are no finite groups $G$ satisfying $s(G)=1$. This raises the following question.

\begin{question} \label{que:infinite_1}
Does there exist a $2$-generated simple group $G$ with $s(G)=1$?
\end{question}

There is a clear difference between generating finite and infinite groups, and straightforward analogues of the theorems for finite groups do not hold for infinite groups. Nevertheless, do the results on the spread of finite simple groups extend to important classes of infinite simple groups? The investigation of the infinite simple groups of Richard Thompson (and their many generalisations) in Sections~\ref{ss:infinite_thompson_introduction}--\ref{ss:infinite_thompson_t} demonstrates that the answer is a resounding yes! However, before turning to these infinite simple groups, in Section~\ref{ss:infinite_soluble}, we look at the other important special case we considered for finite groups: soluble groups.

\subsection{Soluble groups} \label{ss:infinite_soluble}

In the opening to Section~\ref{ss:finite_bgh}, we noted that when Brenner and Wiegold introduced the notion of spread, they proved that for a finite soluble group $G$, we have $s(G) \geq 1$ if and only if $s(G) \geq 2$ if and only if every proper quotient of $G$ is cyclic (see Theorem~\ref{thm:brenner_wiegold}). By Theorem~\ref{thm:breuer_guralnick_kantor}, ``soluble'' can be removed from the hypothesis (while keeping ``finite''). The following theorem establishes that ``finite'' can be removed from the hypothesis (while keeping ``soluble''), in a very strong sense. (Theorem~\ref{thm:infinite_soluble} is due to the author, and this is the first appearance of it in the literature.)

\begin{theorem} \label{thm:infinite_soluble}
Let $G$ be an infinite soluble group such that every proper quotient is cyclic. Then $G$ is cyclic.
\end{theorem}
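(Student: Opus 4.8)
The plan is to show that an infinite soluble group $G$ all of whose proper quotients are cyclic must itself be cyclic, by exploiting the derived series together with the fact that a nontrivial abelian normal subgroup forces severe restrictions. First I would observe that since every proper quotient of $G$ is cyclic (hence abelian), the derived subgroup $G'$ satisfies: either $G' = G$, which is impossible as $G$ is soluble and nontrivial, or $G' = 1$ (so $G$ is abelian), or $G'$ is a minimal nontrivial normal subgroup in a suitable sense — more precisely, $G/G'$ is cyclic and every normal subgroup of $G$ properly containing any fixed $1 \neq N \leqn G$ with $G/N$ cyclic is already all of $G$ unless it too has cyclic quotient. The cleanest route: let $N$ be a minimal normal subgroup, or failing that, pick $1 \neq n \in G'$ and consider $N = \langle n^G \rangle$; since $G/N$ is cyclic and $N \leq G'$, we get $G' = N$, so $G'$ is the unique minimal normal subgroup and $G$ has a very constrained structure.

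Next I would handle the abelian case and the non-abelian case separately. If $G$ is abelian, then every proper quotient being cyclic forces $G$ itself to be cyclic: an infinite abelian non-cyclic group always has a proper non-cyclic quotient (e.g. if $G \cong \mathbb{Z} \times \mathbb{Z}$ quotient by the diagonal, or if $G$ has two independent elements of finite order, or if $G$ is like $\mathbb{Q}$ one uses that $\mathbb{Q}$ is not cyclic and $\mathbb{Q}/\mathbb{Z}$ is not cyclic). One should state this as a small lemma: an abelian group with all proper quotients cyclic is cyclic or $\cong \mathbb{Z}_{p^\infty}$ — and then rule out $\mathbb{Z}_{p^\infty}$, which is not finitely generated but every proper quotient is trivial, so it genuinely satisfies the hypothesis and is \emph{not} cyclic. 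So I must be careful: the statement as given would be \emph{false} for $\mathbb{Z}_{p^\infty}$ unless one reads "cyclic" to include it, or unless the intended hypothesis rules it out. I expect the resolution is that $\mathbb{Z}_{p^\infty}$ has all proper quotients \emph{finite cyclic} and the theorem's "cyclic" means finitely generated cyclic, and in fact $\mathbb{Z}_{p^\infty}$ must be excluded or the proof must show it cannot be a proper quotient context — this is the point to pin down carefully with the author's conventions. Setting that subtlety aside, in the non-abelian case $G' = A$ is abelian (take the derived series: $G \geq G' \geq G'' \geq \cdots$; $G''$ is normal in $G$ with $G/G''$ a proper quotient hence cyclic hence abelian, forcing $G' = G''$, contradiction unless $G'' = 1$), so $A = G'$ is abelian, $G/A$ is cyclic, and $A$ is the unique minimal normal subgroup. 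Then $G = \langle A, t \rangle$ for some $t$, and conjugation by $t$ acts on $A$; minimality of $A$ plus $A$ abelian means $A$ is characteristically simple as a $\mathbb{Z}\langle t\rangle$-module.

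The main obstacle, and the heart of the argument, is deriving a contradiction from the configuration "$A$ abelian, unique minimal normal, $G/A = \langle tA \rangle$ cyclic, $G$ infinite". The strategy: show $A$ must be either finite elementary abelian or a divisible-type group, and in each case produce a proper non-cyclic quotient. If $A$ is infinite, I would look at the quotients $G/\langle a^G \rangle$ for $a \in A$; minimality forces $\langle a^G \rangle = A$ for all $1 \neq a \in A$, so $A$ is $G$-simple. Combined with $A$ abelian, $A$ is a simple $\mathbb{Z}[G/A]$-module, i.e. a simple module over the group ring of a cyclic group, hence $A \cong k$ for a field $k$ that is either $\mathbb{F}_p$ (if $|A|$ is finite or $A$ has torsion) or a field of characteristic $0$ which must then be finite-dimensional over $\mathbb{Q}$ with $t$ acting as a root of an irreducible polynomial — but $G/A$ cyclic infinite acting faithfully and irreducibly on such $A$ can be arranged (e.g. $\mathbb{Z} \ltimes \mathbb{Z}[1/p]$-type or $\mathbb{Z} \ltimes \mathbb{Q}(\alpha)$). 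So the real work is showing that in \emph{every} such case $G$ has a proper non-cyclic quotient: for instance if $A \cong \mathbb{F}_p$ then $G$ is finite (contradicting infinitude) once $G/A$ is also finite, but $G/A$ could be infinite cyclic acting trivially — no, then $A$ is central, so $G$ is abelian; if $G/A$ infinite cyclic acts nontrivially on finite $A$ the kernel is a proper non-cyclic... actually the kernel $K$ has $K/A$ cyclic and $A \leq K$ so $K$ could be non-cyclic (e.g. $\mathbb{F}_p \times \mathbb{Z}$) giving a proper non-cyclic subgroup, but I need a non-cyclic \emph{quotient}: take $G/C$ where $C = C_G(A) \cap (\text{something})$... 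This bookkeeping — enumerating the possible module structures of $A$ over $\mathbb{Z}[\mathbb{Z}]$ or $\mathbb{Z}[\mathbb{Z}/n]$ and in each exhibiting a bad quotient — is where I expect to spend the most effort, and it likely mirrors the finite case analysis in Theorem~\ref{thm:brenner_wiegold} but with the added phenomenon that infinite $A$ of characteristic $0$ can support a faithful irreducible cyclic action, so one must rule those out by a direct quotient construction (e.g. reducing $A$ modulo a maximal ideal to get a proper non-cyclic quotient, or using that $G$ then contains a non-abelian free-by-cyclic-like quotient). I would structure the final writeup as: (1) reduce to $G$ metabelian with $G' = A$ abelian unique-minimal-normal; (2) the abelian base case lemma (with the $\mathbb{Z}_{p^\infty}$ caveat resolved per the author's conventions); (3) the module-theoretic dichotomy on $A$; (4) in each case, a proper non-cyclic quotient, contradiction.
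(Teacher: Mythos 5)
Your reduction in the nonabelian case ($G''=1$, so $A=G'$ is abelian and is the unique minimal normal subgroup, $G/A$ cyclic, $A$ a module for a cyclic group) is exactly the paper's skeleton, but the decisive step is missing. Having observed that $A$ is a simple module over the group ring of a cyclic group with coefficients in $\F_p$ or $\mathbb{Q}$, you assert that the characteristic-zero configurations ``can be arranged'' (your examples $\Int[1/p]{:}\Int$ and $\mathbb{Q}(\alpha){:}\Int$) and that the remaining work is to exhibit a noncyclic proper quotient in each such case, which you leave as open-ended bookkeeping. This conflates two different notions of simplicity: irreducibility of $A$ as a module over the group \emph{algebra} $\mathbb{Q}\<t\>$ versus minimality of $A$ as a normal subgroup of $G$, i.e.\ simplicity as a $\Int\<t\>$-module. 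Your examples are irreducible in the first sense but not minimal normal: in $\Int[1/p]{:}\Int$ the subgroup $q\Int[1/p]$, for a prime $q\neq p$, is a proper nontrivial normal subgroup. The paper's proof closes precisely this gap: after showing $V=G'$ is a faithful irreducible $F\<g\>$-module, hence finite dimensional, and that $F$ finite would force $G$ to be finite, it takes $F=\mathbb{Q}$, writes $g$ as the companion matrix $A$ of its characteristic polynomial, and notes that only finitely many primes occur in the denominators of the coordinates of the orbit $\{e_1A^i \mid i \in \Int\}$, so the subgroup $\<e_1^G\>$ it generates is a proper nontrivial subgroup of $V$ normal in $G$, contradicting minimality. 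Some argument of this kind (equivalently: the $\Int\<g\>$-span of a single nonzero vector of a $\mathbb{Q}$-vector space is never all of it) is the heart of the theorem; your proposal does not contain it, and your one concrete pointer, ``reduce $A$ modulo a maximal ideal'', is aimed in the right direction but undeveloped --- as written, your plan has you hunting for noncyclic quotients of groups that in fact do not exist.

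Two secondary points. The $\mathbb{Z}_{p^\infty}$ worry rests on a false computation: the quotient of the Pr\"ufer group by any nontrivial proper subgroup is again isomorphic to $\mathbb{Z}_{p^\infty}$, which is not cyclic, so it does not satisfy the hypothesis (its proper quotients are certainly not trivial), and no appeal to conventions is needed. The abelian case has a short direct proof you should give: for $1\neq x \in G$ the quotient $G/\<x\>$ is cyclic, so $G$ is a $2$-generated abelian group, and among the infinite ones ($\Int^2$, $\Int\times\Int/n$ with $n\geq 2$, and $\Int$) only $\Int$ has all proper quotients cyclic. Finally, your treatment of the case $F=\F_p$ trails off (``take $G/C$ where \dots''); the clean route, as in the paper, is to note $Z(G)=1$, deduce $G=V{:}\<g\>$ with $\<g\>$ acting faithfully on $V$, so that $G$ embeds in $F^n{:}\GL_n(F)$ and is finite, contradicting the hypothesis that $G$ is infinite.
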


\begin{proof}
It suffices to show that $G$ is abelian, because an infinite abelian group where every proper quotient is cyclic is itself cyclic. For a contradiction, suppose that $G$ is nonabelian. If $1 \neq N \leqn G$, then $G/N$ is cyclic, so $G' \leq N$. Therefore, $G'$ is the unique minimal normal subgroup of $G$. In particular, $G''$ is $G'$ or $1$, but $G$ is soluble, so $G'' = 1$, which implies that $G'$ is abelian. Therefore, $G'$ is an abelian characteristically simple group, so it is isomorphic to the additive group of a vector space $V$ over a field $F$, and we may assume that $F = \F_p$ or $F=\mathbb{Q}$.

Let $g \in G$ such that $G/V = \<Vg\>$ and write $H = \<g\>$, so $G = VH$. Observe that $Z(G)=1$, for otherwise $G/Z(G)$ is cyclic, so $G$ is abelian, a contradiction. Now, if $g^i \in V$, then $g^i \in Z(G) = 1$, so $V \cap H = 1$. Hence, $G$ is a semidirect product $V{:}H$. For all nontrivial $v \in V$, we have $V =  \< v^G \>$ since $V$ is a minimal normal subgroup and $\< v^G \>  = \< v^H \>$ since $V$ is abelian. Therefore, $V$ is an irreducible $FH$-module, so $V$ is finite-dimensional since $H$ is cyclic. (To see this, suppose that $V$ is infinite-dimensional, so $H$ is infinite. We give a proper nonzero submodule $U$, contradicting the irreducibility of $V$. For $0 \neq v \in V$, either $\{ vg^i \mid i \in \mathbb{Z} \}$ is linearly independent, and $U$ is the kernel of $\sum_{i \in \mathbb{Z}} a_ivg^i  \mapsto \sum_{i \in \mathbb{Z}} a_i$, or for some $u = vg^i$ we have $a_0 u + a_1 ug + \cdots + a_k ug^k = 0$ and $U = \< u, ug, \dots, ug^{k-1} \>$.) If $g^i \in C_G(V)$, then $g^i \in Z(G) = 1$, so $V$ is a faithful $FH$-module. In particular, if $F$ is finite, then so is $G = F^n{:}H \leq F^n{:}\GL_n(F)$, so we must have $F = \mathbb{Q}$.  

Let $\chi = X^n + a_{n-1}X^{n-1} + \cdots + a_1X + a_0 \in \mathbb{Q}[X]$ be the characteristic polynomial of $g$, and let $(e_1,\dots,e_n)$ be a basis for $V$ with respect to which the matrix $A$ of $g$ is the companion matrix of $\chi$. Let $P$ be the set of prime divisors appearing in the reduced forms of $a_0, \dots, a_{n-1}$ and note that $P$ is finite. For all $i \in \Int$, write $e_1A^i$ as a linear combination $\lambda_{i1}e_1 + \cdots + \lambda_{in}e_n$. Any prime that divides the denominator of the reduced form of one of the $\lambda_{ij}$ is contained in $P$. Hence, only finitely many primes appear in the denominators of the reduced forms of any element in the subgroup $N$ generated by $\{ e_1A^i \mid i \in \Int \}$. Since $N$ is $\<e_1^G\>$, it is a proper nontrivial subgroup of $V$ that is normal in $G$, which contradicts $V$ being a minimal normal subgroup of $G$. Therefore, $G$ is abelian and so cyclic.
\end{proof}

With a much shorter proof, one can obtain an analogous result for the class of residually finite groups (this was observed by Cox in \cite[Lemma~1.1]{ref:Cox22}).

\begin{theorem} \label{thm:infinite_residually_finite}
Let $G$ be an infinite residually finite group such that every proper quotient is cyclic. Then $G$ is cyclic.
\end{theorem}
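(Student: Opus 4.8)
The plan is to exploit residual finiteness to reduce to the finite case handled by Theorem~\ref{thm:brenner_wiegold}. First I would dispose of the abelian case immediately: an infinite abelian group all of whose proper quotients are cyclic is itself cyclic (if it were not cyclic, pick an element $g$ generating an infinite cyclic subgroup whose quotient fails to be cyclic, or argue directly via the structure of abelian groups — any non-cyclic abelian group has a non-cyclic proper quotient). So assume for contradiction that $G$ is nonabelian. As in the proof of Theorem~\ref{thm:infinite_soluble}, the hypothesis that every proper quotient is cyclic forces $G' \leq N$ for every nontrivial normal subgroup $N$, so $G'$ is the \emph{unique} minimal normal subgroup of $G$; in particular $G' \neq 1$ and every nontrivial normal subgroup contains $G'$.

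Now invoke residual finiteness: there is a normal subgroup $N \trianglelefteq G$ of finite index with $N \neq G$, hence (taking any proper normal subgroup of finite index) we may find a nontrivial such $N$ unless $G$ is finite — but a nontrivial finite-index normal subgroup must contain the unique minimal normal subgroup $G'$, so $G' $ has finite index, meaning $G/G'$ is finite; being also cyclic, this is fine, but it does not yet give a contradiction. The cleaner route: since $G$ is residually finite and $G' \neq 1$, pick $1 \neq x \in G'$ and a finite-index normal subgroup $N$ with $x \notin N$. Then $N$ does not contain $G'$, so by the previous paragraph $N = 1$. Hence $G$ itself is finite, contradicting the hypothesis that $G$ is infinite. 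This is the crux of the argument, and it is short precisely because residual finiteness lets us separate a point of $G'$ from a finite-index subgroup, while the unique-minimal-normal-subgroup structure leaves no room for a small finite-index subgroup to exist.

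The main (and essentially only) obstacle is making sure the abelian base case is handled correctly and that the deduction ``$N$ nontrivial of finite index $\Rightarrow N \supseteq G'$'' is applied in the right direction; everything else is a two-line consequence of residual finiteness. I would not need Theorem~\ref{thm:brenner_wiegold} at all in the end — the contradiction comes purely from the tension between ``infinite'', ``residually finite'', and ``unique minimal normal subgroup'' — so the write-up can be made very brief, as the parenthetical attribution to Cox suggests.
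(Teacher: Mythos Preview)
Your proof is correct and essentially the same as the paper's: both reduce to showing $G$ is abelian, and both use residual finiteness to find a finite-index normal subgroup $N$ missing a nontrivial element of $G'$, then contradict the hypothesis on proper quotients. The paper's version is marginally slicker --- it simply picks $x,y$ with $[x,y]\neq 1$, finds a finite-index normal $N$ with $[x,y]\notin N$, and notes that $G/N$ is then simultaneously nonabelian and (as a proper quotient) cyclic --- thereby skipping your detour through the unique-minimal-normal-subgroup structure, but the underlying idea is identical.
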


\begin{proof}
Suppose that $G$ is nonabelian. Fix $x,y \in G$ with $[x,y] \neq 1$. Since $G$ is residually finite, $G$ has a finite index normal subgroup $N$ such that $[Nx,Ny]$ is nontrivial in $G/N$ (so, $Nx$ and $Ny$ are nontrivial in $G/N$). Since $G$ is infinite and $N$ has finite index, we know that $N$ is nontrivial, so $G/N$ is cyclic, which contradicts $G/N$ being nonabelian. Therefore, $G$ is abelian and hence cyclic.
\end{proof}

\subsection{Thompson's groups: an introduction} \label{ss:infinite_thompson_introduction}

In 1965, Richard Thompson introduced three finitely generated infinite groups $F < T < V$ \cite{ref:Thompson65}. Among other interesting properties of these groups, $V$ and $T$ were the first known examples of finitely presented infinite simple groups and for 35 years (until the work of Burger and Mozes \cite{ref:BurgerMozes00}) all known examples of such groups were closely related to $T$ and $V$. For an indication of other interesting properties of these groups, we record that $F$ is finitely presented yet it contains a copy of $F \times F$ and is an HNN extension of itself. Moreover, $F$ has exponential growth but contains no nonabelian free groups, and one of the most famous open questions in geometric group theory is whether $F$ is amenable \cite{ref:Cleary17}. However, these three groups not only raise interesting group theoretic questions, but they have also played a role in a whole range of mathematical areas such as the word problem for groups, homotopy theory and dynamical systems (see \cite{ref:Dydak77_2,ref:GhysSergiescu87,ref:Thompson80} for example). We refer the reader to Canon, Floyd and Parry's introduction to these groups \cite{ref:CannonFloydParry96}. 

An appealing feature of Thompson's groups is that they admit concrete representations as transformation groups, which we outline now.  Let $X = \{0,1\}^*$ be the set of all finite words over $\{0,1\}$, and let $\C = \{0,1\}^\Nat$ be \emph{Cantor space}, the set of all infinite sequences over $\{0,1\}$ with the usual topology. For $u \in X$, we write $u\C = \{uw \mid w \in \C \}$, and we say a finite set $A \subseteq X$ is a \emph{basis} of $\C$ if $\{ u\C \mid u \in A \}$ is a partition of $\C$. Thompson's group $V$ is the group of homeomorphisms $g \in \Homeo(\C)$ for which there exists a \emph{basis pair}, namely a bijection $\sigma\: A \mapsto B$ between two bases $A$ and $B$ of $\C$ such that $(uw)g = (u\sigma)w$ for all $u \in A$ and all $w \in \C$. In other words, $V$ is the group of homeomorphisms of $\C$ that act by prefix substitutions. For instance, $c\:(00,01,1) \mapsto (11,0,10)$ is an element of $V$ that, for example, maps $01010101\dots$ to $0010101\dots$. The selfsimilarity of $\C$ means that there is not a unique choice of basis pair; indeed, by subdividing $\C$ further $c$ is also represented by $(000, 001, 01, 1) \mapsto (110, 111, 0, 10)$. By identifying elements of $X$ with vertices of the infinite binary rooted tree, we can represent bases as binary rooted trees and elements of $V$ by the familiar \emph{tree pairs}, as shown in Figure~\ref{fig:elements}.

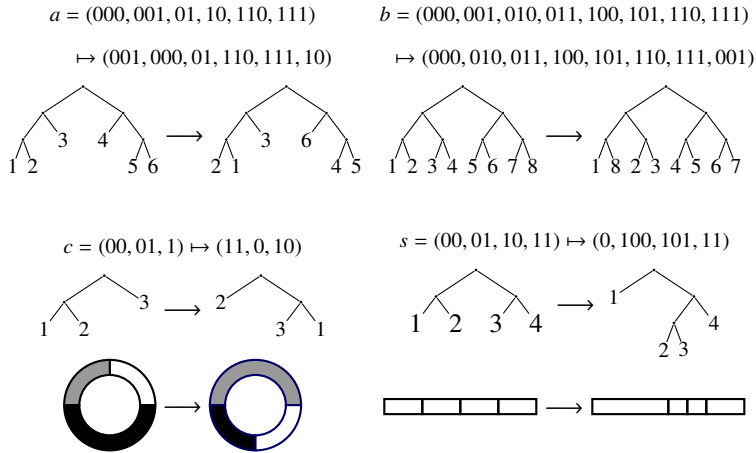
\begin{figure}

\begin{minipage}{0.42\textwidth}
\begin{gather*}
\text{\footnotesize $a = (000,001,01,10,110,111)$} \\
\text{\footnotesize \qquad $\mapsto (001,000,01,110,111,10)$} \\[1pt]
  \begin{tikzpicture}[
      scale=0.5,
      font=\footnotesize,
      inner sep=0pt,
      baseline=-30pt,
      level distance=20pt,
      level 1/.style={sibling distance=60pt},
      level 2/.style={sibling distance=30pt},
      level 3/.style={sibling distance=15pt}
    ]
    \node (root) [circle,fill] {}
    child {node (0) [circle,fill] {}
      child {node (00) [circle,fill] {}
        child {node (000) {1}}
        child {node (001) {2}}}
      child {node (01) {3}}}
    child {node (1) [circle,fill] {}
      child {node (10) {4}}
      child {node (11) [circle,fill] {}
        child {node (110) {5}}
        child {node (111) {6}}}};
  \end{tikzpicture}
  \raisebox{3mm}{ $\longrightarrow$ }
  \begin{tikzpicture}[
      scale=0.5,
      font=\footnotesize,
      inner sep=0pt,
      baseline=-30pt,
      level distance=20pt,
      level 1/.style={sibling distance=60pt},
      level 2/.style={sibling distance=30pt},
      level 3/.style={sibling distance=15pt}
    ]
    \node (root) [circle,fill] {}
    child {node (0) [circle,fill] {}
      child {node (00) [circle,fill] {}
        child {node (000) {2}}
        child {node (001) {1}}}
      child {node (01) {3}}}
    child {node (1) [circle,fill] {}
      child {node (10) {6}}
      child {node (11) [circle,fill] {}
        child {node (110) {4}}
        child {node (111) {5}}}};
  \end{tikzpicture}
\end{gather*}
\end{minipage}
\begin{minipage}{0.48\textwidth}
\begin{gather*}
\text{\footnotesize $b = (000,001,010,011,100,101,110,111)$} \\
\text{\footnotesize \quad $\mapsto (000,010,011,100,101,110,111,001)$} \\[1pt]
  \begin{tikzpicture}[
        scale=0.5,
        font=\footnotesize,
        inner sep=0pt,
        baseline=-30pt,
        level distance=20pt,
        level 1/.style={sibling distance=60pt},
        level 2/.style={sibling distance=30pt},
        level 3/.style={sibling distance=15pt}
      ]
      \node (root) [circle,fill] {}
      child {node (0) [circle,fill] {}
        child {node (00) [circle,fill] {}
          child {node (000) {1}}
          child {node (001) {2}}}
        child {node (01) [circle,fill] {}
          child {node (010) {3}}
          child {node (011) {4}}}}
      child {node (1) [circle,fill] {}
        child {node (10) [circle,fill] {}
          child {node (100) {5}}
          child {node (101) {6}}}
        child {node (11) [circle,fill] {}
          child {node (110) {7}}
          child {node (111) {8}}}};
  \end{tikzpicture}
  \raisebox{3mm}{ $\longrightarrow$ }
  \begin{tikzpicture}[
        scale=0.5,
        font=\footnotesize,
        inner sep=0pt,
        baseline=-30pt,
        level distance=20pt,
        level 1/.style={sibling distance=60pt},
        level 2/.style={sibling distance=30pt},
        level 3/.style={sibling distance=15pt}
      ]
      \node (root) [circle,fill] {}
      child {node (0) [circle,fill] {}
        child {node (00) [circle,fill] {}
          child {node (000) {1}}
          child {node (001) {8}}}
        child {node (01) [circle,fill] {}
          child {node (010) {2}}
          child {node (011) {3}}}}
      child {node (1) [circle,fill] {}
        child {node (10) [circle,fill] {}
          child {node (100) {4}}
          child {node (101) {5}}}
        child {node (11) [circle,fill] {}
          child {node (110) {6}}
          child {node (111) {7}}}};
  \end{tikzpicture}
\end{gather*}
\end{minipage}
\vspace{2.5mm}

\begin{minipage}{0.42\textwidth}
\begin{gather*}
\text{\footnotesize $c = (00,01,1) \mapsto (11,0,10)$} \\[1pt]
  \begin{tikzpicture}[
        scale=0.5,
        font=\footnotesize,
        inner sep=0pt,
        baseline=-30pt,
        level distance=20pt,
        level 1/.style={sibling distance=60pt},
        level 2/.style={sibling distance=30pt}
      ]
      \node (root) [circle,fill] {}
      child {node (0) [circle,fill] {}
        child {node (00) {1}}
        child {node (01) {2}}}
      child {node (1) {3}};
  \end{tikzpicture}
  \raisebox{5mm}{ \ $\longrightarrow$ \ }
  \begin{tikzpicture}[
        scale=0.5,
        font=\footnotesize,
        inner sep=0pt,
        baseline=-30pt,
        level distance=20pt,
        level 1/.style={sibling distance=60pt},
        level 2/.style={sibling distance=30pt}
      ]
      \node (root) [circle,fill] {}
      child {node (0) {2}}
      child {node (1) [circle,fill] {}
        child {node (10) {3}}
        child {node (11) {1}}};
  \end{tikzpicture} \\[-2mm]
  \begin{tikzpicture}[scale=2,thick]
  \draw [fill=white]         (  0:0.3) -- (  0:0.2) arc (  0: 90:0.2) -- ( 90:0.3) arc ( 90:  0:0.3);
  \draw [fill=black!40]       ( 90:0.3) -- ( 90:0.2) arc ( 90:180:0.2) -- (180:0.3) arc (180: 90:0.3);
  \draw [fill=black] (180:0.3) -- (180:0.2) arc (180:360:0.2) -- (360:0.3) arc (360:180:0.3);
  \end{tikzpicture}
  \raisebox{5mm}{ $\longrightarrow$ }
  \begin{tikzpicture}[scale=2,thick,draw=blue!40!black]
  \draw [fill=black!40]       (  0:0.3) -- (  0:0.2) arc (  0:180:0.2) -- (180:0.3) arc (180:  0:0.3);
  \draw [fill=black] (180:0.3) -- (180:0.2) arc (180:270:0.2) -- (270:0.3) arc (270:180:0.3);
  \draw [fill=white]         (270:0.3) -- (270:0.2) arc (270:360:0.2) -- (360:0.3) arc (360:270:0.3);
  \end{tikzpicture}
\end{gather*}
\end{minipage}
\begin{minipage}{0.48\textwidth}
\begin{gather*}
\text{\footnotesize $s = (00,01,10,11) \mapsto (0,100,101,11)$} \\[1pt]
\begin{tikzpicture}[
      scale=0.5,
      inner sep=0pt,
      baseline=-30pt,
      level distance=20pt,
      level 1/.style={sibling distance=60pt},
      level 2/.style={sibling distance=30pt},
      level 3/.style={sibling distance=15pt}
    ]
    \node (root) [circle,fill] {}
    child {node (0) [circle,fill] {}
      child {node (00) {1}}
      child {node (01) {2}}}
    child {node (1) [circle,fill] {}
      child {node (10) {3}}
      child {node (11) {4}}};
  \end{tikzpicture}
  \raisebox{5mm}{ \ $\longrightarrow$ \ }
  \begin{tikzpicture}[
      scale=0.5,
      font=\footnotesize,
      inner sep=0pt,
      baseline=-30pt,
      level distance=20pt,
      level 1/.style={sibling distance=60pt},
      level 2/.style={sibling distance=30pt},
      level 3/.style={sibling distance=15pt}
    ]
    \node (root) [circle,fill] {}
    child {node (0) {1}}
    child {node (1) [circle,fill] {}
      child {node (10) [circle,fill] {}
        child {node (100) {2}}
        child {node (101) {3}}}
      child {node (11) {4}}};
  \end{tikzpicture} \\[3.1mm]
  \begin{tikzpicture}[scale=2,thick]
  \draw (0,   0) rectangle (0.25,0.1);
  \draw (0.25,0) rectangle (0.5, 0.1);
  \draw (0.5, 0) rectangle (0.75,0.1);
  \draw (0.75,0) rectangle (1,   0.1);
  \end{tikzpicture}
  \longrightarrow
  \begin{tikzpicture}[scale=2,thick]
  \draw (0,    0) rectangle (0.5,  0.1);
  \draw (0.5,  0) rectangle (0.625,0.1);
  \draw (0.625,0) rectangle (0.75, 0.1);
  \draw (0.75, 0) rectangle (1,    0.1);
  \end{tikzpicture}
\end{gather*}
\vspace{-0.2mm}
\end{minipage}

\caption{Four elements of Thompson's group $V$.} \label{fig:elements}
\end{figure}

A motivating perspective in the study of generating sets of Thompson's groups is that $V$ combines the selfsimilarity of the Cantor space with permutations from the symmetric group. Indeed, to $g \in V$ we may associate (not uniquely) a permutation as follows. Let $\sigma\:A \to B$ be a basis pair for $g$ and write $A = \{ a_1, \dots, a_n \}$ and $B = \{ b_1, \dots, b_n \}$ where $a_1 < \dots < a_n$ and $b_1 < \dots < b_n$ in the lexicographic order. Then the permutation associated to $g$ is the element $\pi_g \in \Sm{n}$ satisfying $a_ig = b_{i\pi_g}$. For the elements in  Figure~\ref{fig:elements}, for instance,
\[
\pi_a = (1 \, 2)(4 \, 5 \, 6), \quad \pi_b = (2 \, 3 \, 4 \, 5 \, 6 \, 7 \, 8), \quad \pi_c = (1 \, 2 \, 3), \quad \pi_s = 1.
\]
This perspective gives an easy way to define $F$ and $T$. Thompson's groups $F$ and $T$ are the subgroups of $V$ of elements whose associated permutation is trivial and cyclic, respectively (this is well defined). Clearly $F < T < V$ and, referring to Figure~\ref{fig:elements}, we see that $s \in F$, $c \in T \setminus F$ and $a,b \in V \setminus T$.

Given a binary word $u$, we can associate a subset $I_u$ of the unit interval $[0,1]$ (or unit circle $\S^1$) inductively as follows: the empty word corresponds to $(0,1)$ and for any binary word $u$, the words $u0$ and $u1$ correspond to the open left and right halves of $u$. In this way, a basis for $\C$ can be interpreted as a sequence of disjoint open intervals whose closures cover $[0,1]$ (or $\S^1$), and an element of $V$, as a basis pair, $\sigma\:\{a_1, \dots, a_n\}\to\{b_1, \dots, b_n\}$ defines a bijection $g\:[0,1] \to [0,1]$ (or $g\:\S^1 \to \S^1$) by specifying that $I_{a_i}g = I_{b_i}$ and $g|_{I_{a_i}}$ is affine for all $1 \leq i \leq n$. Under this correspondence, $F$ is a group of piecewise linear homeomorphisms of $[0,1]$ and $T$ is a group of piecewise linear homeomorphisms of $\S^1$. See Figure~\ref{fig:elements} for some examples.

\subsection{\boldmath Thompson's group $V$} \label{ss:infinite_thompson_v}

The final three sections of this survey address generating sets for Thompson's groups, which have seen lots of very recent progress. We begin with $V$.

The group $V$ is $2$-generated. Indeed, referring to Figure~\ref{fig:elements}, $V = \< a,b \>$.  Given this infinite $2$-generated simple group $V$, we are naturally led to ask: is $V$ $\frac{3}{2}$-generated? An answer was given by Donoven and Harper in 2020 \cite{ref:DonovenHarper20}.

\begin{theorem} \label{thm:donoven_harper}
Thompson's group $V$ is $\frac{3}{2}$-generated.
\end{theorem}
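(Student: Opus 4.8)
The plan is to work with $V$ realised as a group of homeomorphisms of Cantor space $\C$, exploiting two features: the self-similarity of $\C$ (every cylinder $u\C$ is canonically homeomorphic to $\C$, with rigid stabiliser $V_{u\C}\cong V$), and the simplicity of $V$, so that any nontrivial conjugation-invariant subset generates it. Since $\langle x,y\rangle=V$ is equivalent to $\langle x^g,y^g\rangle=V$, it suffices to prove the statement for one representative of each conjugacy class, so we may normalise $x$ first. The endgame is a \emph{structural lemma}: a suitable explicit configuration of elements of $V$ — say a basic transposition $\tau$ (the prefix swap $uw\leftrightarrow vw$ of two disjoint cylinders $u\C$, $v\C$, identity elsewhere) together with a well-chosen ``mixing'' element $z$ whose dynamics overlap those of $\tau$ — already generates $V$. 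This is the Cantor-space analogue of $\Sm n=\langle(1\,2),(1\,2\,\cdots\,n)\rangle$: from $\tau$ and $z$ one conjugates $\tau$ to obtain basic transpositions of arbitrarily small cylinders, and then, since small-support elements form a conjugation-invariant generating set of the simple group $V$, one recovers all of $V$.

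\textbf{Normalising $x$.} As $x\neq1$, some point of $\C$ is moved by $x$; by continuity and total disconnectedness there is a basic clopen cylinder $u\C$ with $(u\C)x\cap u\C=\emptyset$ and $u\C\cup(u\C)x\neq\C$. By the homogeneity of the $V$-action on configurations of cylinders we may conjugate $x$ so that $u=0$, so that $(0\C)x=10\C$, and so that $x$ restricted to $0\C$ is the prefix shift $0w\mapsto10w$; whatever $x$ does on the complement is simply recorded and will play no role, while the disjoint cylinder $11\C$ is left ``free''. Thus, after conjugation, $x$ has a completely explicit action on $0\C$.

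\textbf{Choosing $y$.} This is the technical heart. We take $y$ supported inside $0\C\cup11\C$: on $0\C$ let $y$ act as a small-support element $\rho\in V_{0\C}$, and on $11\C$ let $y$ permute sub-cylinders in a way to be pinned down. Confining $\mathrm{supp}(y)$ to $0\C\cup11\C$ is what lets us compute: $x$ is \emph{understood} on $0\C$ (it is the prefix shift onto $10\C$), so the conjugates $x^{-k}yx^{k}$ are supported on the spiralling cylinders $1\cdots1\,0\,\C$ and are transported copies of $\rho$, and products and commutators of $y$ with these conjugates telescope in a controlled fashion. Choosing $\rho$ and $y|_{11\C}$ appropriately, one arranges that a short explicit word in $x$ and $y$ is a basic transposition $\tau$ of two small cylinders, while $x$ (or a power of $y$, or a further short word) supplies the mixing element $z$ required by the structural lemma. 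Then $\langle x,y\rangle\supseteq\langle\tau,z\rangle=V$, so $\langle x,y\rangle=V$.

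\textbf{The main obstacle.} The difficulty is entirely in the previous paragraph: one must design $y$ so that explicit words in $x$ and $y$ have tightly controlled support and prescribed local action \emph{uniformly in $x$}, even though $x$ is arbitrary away from $0\C$. The device is localisation — placing $\mathrm{supp}(y)$ on cylinders where $x$ is either irrelevant or acts as a known prefix map — so that the relevant words reduce, outside a controlled region, to iterated prefix shifts, after which a finite computation with tree pairs exhibits $\tau$ and $z$. Establishing the structural lemma is the second main ingredient, and is where the specific self-similar structure of $V$ (rather than a soft argument) is used; a convenient way to organise it is to show that $V$ is generated by the rigid stabilisers $V_{u\C}$ of cylinders together with a single element that fails to normalise their product, and to produce both from $\langle\tau,z\rangle$ by conjugation.
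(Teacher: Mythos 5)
There are two genuine gaps, and they sit exactly at the points your plan defers to later. First, the endgame ``structural lemma'' that $\langle\tau,z\rangle=V$ for a transposition $\tau$ and an unspecified ``mixing'' element $z$ is not established, and the justification you sketch for it is a non sequitur: the fact that the transpositions (or the small-support elements) form a conjugation-invariant generating set of the simple group $V$ only says that the \emph{normal closure} of $\tau$ is $V$, which is automatic from simplicity; it says nothing about the subgroup $\langle\tau,z\rangle$, which contains only the conjugates of $\tau$ by its \emph{own} elements. Likewise ``transpositions of arbitrarily small cylinders'' is far from sufficient: the transpositions $(0^k1 \ \ 0^{k+1}1)$, $k\geq 0$, have arbitrarily small supports but generate only a copy of a finitary symmetric group. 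Unlike $\Sm{n}$, the group $V$ has no element playing the role of the $n$-cycle whose conjugates of a fixed transposition visibly give a generating family, so this lemma is essentially the hard content of the theorem, not a reduction of it. The published argument avoids the issue by aiming higher: it puts entire rigid stabilisers $V_{[U_i]}$ of overlapping clopen sets inside $\langle x,y\rangle$ and then applies the covering lemma (Lemma~\ref{lem:covering_v}), which is the robust substitute for your structural lemma.

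Second, your support control of words in $x$ and $y$ does not hold. You normalised $x$ only on the single cylinder $0\C$; you know nothing about $x$ on $10\C$ or about $(11\C)x$. Hence the assertion that $x^{-k}yx^{k}$ is supported on the ``spiralling'' cylinders $1\cdots10\,\C$ fails already for $k=2$ (and even $y^{x}$ carries an uncontrolled piece on $(11\C)x$, which may re-enter $0\C\cup11\C$ and wreck the intended computations); $x$ could, for instance, have finite order. So the ``short explicit word in $x$ and $y$'' that is supposed to be a transposition \emph{uniformly in $x$} is only asserted. The device that makes the actual proof robust against the unknown part of $x$ is different: $y$ is taken to be a product of pieces with pairwise disjoint supports and pairwise coprime orders, so that each piece is a power of $y$ and lies in $\langle x,y\rangle$ individually; then a single controlled conjugation by $x$, together with the $2$-generation of the rigid stabiliser $V_{[A]}\cong V$, yields whole rigid stabilisers inside $\langle x,y\rangle$, and further pieces of $y$ supply the transpositions needed to make the clopen sets overlap before invoking Lemma~\ref{lem:covering_v} (see Example~\ref{ex:donoven_harper}). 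Your localisation idea is in the right spirit, but without the coprime-order/disjoint-support mechanism and the covering-lemma endgame, both key steps of your outline are unsupported.
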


Theorem~\ref{thm:donoven_harper} gave the first example of a noncyclic infinite $\frac{3}{2}$-generated group, other than the pathological \emph{Tarski monsters}: the infinite groups whose only proper nontrivial subgroups have order $p$ for a fixed prime $p$, which are clearly simple and $\frac{3}{2}$-generated and were proved to exist for all $p > 10^{75}$ by Olshanskii in \cite{ref:Olshanskii80}. (Note that the groups $G_1$ and $G_2$ in Example~\ref{ex:cox} were found later by Cox in \cite{ref:Cox22}, motivated by a question posed in \cite{ref:DonovenHarper20}.) In particular, $V$ was the first finitely presented example of a noncyclic infinite $\frac{3}{2}$-generated group.

\vspace{0.5\baselineskip}

\textbf{Methods. A parallel with symmetric groups. } Let us write $G = \Sm{n}$ and $\Omega = \{ 1, \dots, n \}$. It is well known that $G$ is generated by the set of transpositions $\{ (i \, j) \mid \text{distinct $i, j \in \Omega$} \}$ which yields a natural presentation for $G$, namely 
\begin{equation} \label{eq:presentation_symmetric}
\< t_{i,j} \mid t_{i,j}^2, \ t_{i,j}^{t_{k,l}} = t_{i (k \, l),j(k \, l)} \>
\end{equation}
Moreover, using the fact that $G$ is generated by transpositions, we obtain the following \emph{covering lemma} (here $G_{[A]}$ is the subgroup of $G$ supported on $A \subseteq \Omega$).

\begin{lemma} \label{lem:covering_symmetric}
Let $A_1, \dots, A_k \subseteq \Omega$ satisfy $A_i \cap A_{i+1} \neq \emptyset$ and $\bigcup_{i=1}^k{A_i} = \Omega$. Then $G_{[A_i]} \cong \Sm{|A_i|}$ for all $i$, and $G = \< G_{[A_1]}, \dots, G_{[A_k]} \>$. 
\end{lemma}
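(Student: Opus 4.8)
The plan is to treat the two assertions separately, the first being immediate and the second an easy induction exploiting that $\Sm n$ is generated by transpositions.

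For the isomorphism: by definition $G_{[A_i]}$ consists of those $g \in G = \Sm n$ with $\mathrm{supp}(g) \subseteq A_i$, i.e.\ permutations of $\Omega$ that fix $\Omega \setminus A_i$ pointwise. Restriction to $A_i$ is then a bijective homomorphism onto $\Sym(A_i) \cong \Sm{|A_i|}$, so $G_{[A_i]} \cong \Sm{|A_i|}$. I would dispatch this in one sentence.

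For the generation claim, set $H = \< G_{[A_1]}, \dots, G_{[A_k]} \> \leq G$; the content is that $H = G$. Since $\Sm n$ is generated by its transpositions (the very fact underlying the presentation~\eqref{eq:presentation_symmetric}), it suffices to show every transposition of $\Omega$ lies in $H$. Writing $B_m = A_1 \cup \cdots \cup A_m$, I would prove by induction on $m$ the stronger statement $G_{[B_m]} \leq H$; as $B_k = \Omega$, the case $m = k$ gives $G = G_{[\Omega]} \leq H$, hence $H = G$. The base case $m = 1$ is part of the definition of $H$. For the step, assume $G_{[B_m]} \leq H$; since $A_m \cap A_{m+1} \neq \emptyset$ and $A_m \subseteq B_m$, fix $p \in B_m \cap A_{m+1}$. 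Given distinct $x, y \in B_{m+1} = B_m \cup A_{m+1}$, the transposition $(x\,y)$ lies in $G_{[B_m]} \leq H$ if $x, y \in B_m$ and in $G_{[A_{m+1}]} \leq H$ if $x, y \in A_{m+1}$; in the remaining case, say $x \in B_m \setminus A_{m+1}$ and $y \in A_{m+1} \setminus B_m$, we have $(x\,p) \in G_{[B_m]} \leq H$ and $(p\,y) \in G_{[A_{m+1}]} \leq H$, and conjugating $(x\,p)$ by $(p\,y)$ gives $(x\,y)$ since $x \notin \{p,y\}$, so $(x\,y) \in H$ again. Thus all transpositions on $B_{m+1}$ lie in $H$, whence $G_{[B_{m+1}]} \leq H$, completing the induction.

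There is no real obstacle here; the proof is elementary. The only points requiring care are (a) the appeal to generation of $\Sm n$ by transpositions, which is precisely what makes the hypothesis on the $A_i$ being \emph{symmetric} groups essential — and is the feature the later parallel with Thompson's group $V$ will need an analogue of; and (b) the small bookkeeping observation in the inductive step that a single shared point $p$ suffices to bridge $B_m$ and $A_{m+1}$, which explains why only the consecutive overlaps $A_i \cap A_{i+1} \neq \emptyset$, rather than pairwise overlaps, are assumed.
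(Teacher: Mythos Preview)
Your proof is correct and follows precisely the approach the paper indicates: the paper does not actually write out a proof of this lemma but simply states that it is obtained ``using the fact that $G$ is generated by transpositions'', and your argument does exactly that, via a clean induction on $m$ showing $G_{[A_1 \cup \cdots \cup A_m]} \leq H$. The bookkeeping with the bridging point $p$ is handled correctly, and your closing remarks about why only consecutive overlaps are needed and about the parallel with $V$ are apt.
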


These results for $G = \Sm{n}$ have analogues for $V$. Here, \emph{transpositions} are elements of the following form: for $u, v \in X$ such that the corresponding subsets of $\C$ are disjoint, we write $(u \, v)$ for the element given as $(u,v,w_1, \dots, w_k) \mapsto (v,u,w_1, \dots, w_k)$ where $\{ u, v, w_1, \dots, w_k\}$ is a basis for $\C$. Brin \cite{ref:Brin04} proved that $V$ is generated by $\{ (u \, v) \mid \text{disjoint $u,v \in X$} \}$. 

As an aside, let us point out why the subgroup of elements that are a product of an even number of transpositions is not a proper nontrivial normal subgroup of $V$ (as with the symmetric and alternating groups): this subgroup is not proper. Indeed, every element of $V$ is a product of an even number of transpositions as the selfsimilarity of $\C$ shows that $(u \, v)$ can be rewritten as $(u0 \, v0)(u1 \, v1)$.

Bleak and Quick \cite[Theorem~1.1]{ref:BleakQuick17} demonstrated how this generating set gives a presentation for $V$ combining the corresponding presentation for the symmetric group in \eqref{eq:presentation_symmetric} with the selfsimilarity of $\C$, namely 
\begin{equation} \label{eq:presentation_v}
\< t_{u,v} \mid t_{u,v}^2, \ t_{u,w}^{t_{x,y}} = t_{u(x \, y),v(x \, y)}, \ t_{u,v} = t_{u0,v0}t_{u1,v1} \>
\end{equation}
(see \cite[(1.1)]{ref:BleakQuick17} for a full explanation of the notation used in the relations).

We will say no more about presentations, save that Bleak and Quick found a presentation for $V$ with 2 generators and 7 relations \cite[Theorem~1.3]{ref:BleakQuick17}, which they derived from another, more intuitive, presentation based on the analogy with $S_n$ which has 3 generators and 8 relations \cite[Theorem~1.2]{ref:BleakQuick17}.

As with the symmetric group, the fact that $V$ is generated by transpositions yields an easy proof of the following.

\begin{lemma} \label{lem:covering_v}
Let $U_1, \dots, U_k \subseteq \C$ be clopen subsets satisfying $U_i \cap U_{i+1} \neq \emptyset$ and $\bigcup_{i=1}^k U_i = \C$. Then $V_{[U_i]} \cong V$ for all $i$, and $V = \< V_{[U_1]}, \dots, V_{[U_k]} \>$.
\end{lemma}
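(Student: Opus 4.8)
The plan is to mimic the proof of Lemma~\ref{lem:covering_symmetric} exactly, using the fact (due to Brin) that $V$ is generated by its transpositions $(u\,v)$. Fix clopen sets $U_1,\dots,U_k \subseteq \C$ with $U_i \cap U_{i+1} \neq \emptyset$ and $\bigcup_i U_i = \C$. The first step is to justify that $V_{[U_i]} \cong V$: a clopen subset of $\C$ is a finite disjoint union of basic cylinders $u\C$, and any such set is itself homeomorphic to $\C$ via a homeomorphism that is locally a prefix substitution, so the group of prefix-substitution homeomorphisms supported on $U_i$ is isomorphic to $V$. (One should remark that this uses that $U_i$ is a proper nonempty clopen set, which follows from $k \geq 2$ and the hypotheses, or handle $k=1$ trivially.)

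The second and main step is to show $V = \langle V_{[U_1]},\dots,V_{[U_k]}\rangle$. Call the right-hand side $W$. Since $V$ is generated by transpositions, it suffices to show every transposition $(u\,v) \in W$, where $u\C$ and $v\C$ are disjoint basic cylinders. The key device is subdivision: by the self-similarity relation $t_{u,v} = t_{u0,v0}t_{u1,v1}$ from \eqref{eq:presentation_v}, any transposition can be rewritten as a product of transpositions between cylinders of arbitrarily small diameter. So it is enough to move a single "small" transposition $(a\,b)$ — with $a\C, b\C$ disjoint and each contained in some $U_i$ — into $W$. If $a\C, b\C$ lie in a common $U_i$, then $(a\,b) \in V_{[U_i]} \subseteq W$ directly. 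Otherwise, say $a\C \subseteq U_i$ and $b\C \subseteq U_j$; using connectivity of the "overlap graph" on $\{1,\dots,k\}$ (edges $i \sim i+1$, which connect all vertices since the $A_i$/$U_i$ form a connected chain), choose a chain $U_i = U_{i_0}, U_{i_1}, \dots, U_{i_m} = U_j$ of consecutive members, pick a basic cylinder inside each nonempty overlap $U_{i_\ell} \cap U_{i_{\ell+1}}$, and use these as "stepping stones": a product of transpositions, each supported inside a single $U_{i_\ell}$ (hence in $W$), conjugates $(a\,b)$ — or realises it as a commutator/product of such — exactly as transpositions are built up from adjacent ones in the symmetric group.

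**The main obstacle** I expect is bookkeeping in this last step: one must be careful that the intermediate cylinders can be chosen disjoint from $a\C$, $b\C$ and from each other (which is where further subdivision is again invoked — shrink everything enough), and that the "stepping stone" identity for transpositions in $V$ genuinely parallels the symmetric-group identity $(a\,b) = (a\,c)(c\,b)(a\,c)$ when $a,c$ and $c,b$ lie in common supports. Concretely: if $c\C$ is disjoint from $a\C$ and $b\C$, with $a\C,c\C$ both in one $U_{i_\ell}$ and $c\C,b\C$ both in the next, then $(a\,b) = (a\,c)(c\,b)(a\,c)$ with $(a\,c),(c\,b) \in W$, so $(a\,b) \in W$; iterating along the chain handles the general small transposition, and then the subdivision relation handles all transpositions. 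Thus $W = V$, completing the proof. None of these steps is deep — the content is entirely that "$V$ is generated by transpositions" plus "clopen pieces of $\C$ look like $\C$" — so the write-up is a routine transcription of the symmetric-group argument into the Cantor-space setting, and I would keep it to a paragraph citing \cite{ref:Brin04} for the generation-by-transpositions fact.
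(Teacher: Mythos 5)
Your proposal is correct and is exactly the route the paper intends: the lemma is stated there without proof, with the remark that Brin's theorem that $V$ is generated by transpositions yields it, and your transcription of the symmetric-group argument via the subdivision relation $(u\,v)=(u0\,v0)(u1\,v1)$ plus stepping-stone conjugation along the chain of overlaps is the standard way to carry that out. The one point to make explicit when writing it up is the bookkeeping you flagged: a stepping stone for a given piece need only be disjoint from that piece's two cylinders (not from all of $a\C \cup b\C$), and after subdividing the pieces deeper than a fixed cylinder $d\C$ contained in the relevant overlap, some sub-cylinder of $d\C$ avoids both piece cylinders, so such a stone always exists.
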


With Lemma~\ref{lem:covering_v} in place, we now highlight the main ideas in the proof of Theorem~\ref{thm:donoven_harper} by way of an example (this is \cite[Example~4.1]{ref:DonovenHarper20}). We will see an alternative approach in Theorem~\ref{thm:donoven_harper_hyde}

\begin{example} \label{ex:donoven_harper}
Let $x = (00 \ \ 01) \in V$. We will construct $y \in V$ such that $\<x,y\> = V$. Let $y_1 = a_{[00]}$ and $y_2 = b_{[01]}$, where for $g \in V$ and clopen $A \subseteq \C$ we write $g_{[A]}$ for the image of $g$ under the canonical isomorphism $V \to V_{[A]}$. Let $y_3 = (00 \ \ 01 \ \ 10 \ \ 11)_{[0^310]} \cdot (0^310^3 \ \ 010^3)$ and $y_4 = (0000 \ \ 0001 \ \ \cdots \ \ 1010)_{[0^31^2]} \cdot (0^31^20^4 \ \ 1)$, and define $y = y_1y_2y_3y_4$. Note that $y_1$, $y_2$, $y_3$ and $y_4$ have coprime orders (6, 7, 5 and 11, respectively). Moreover, these elements have disjoint support, so they commute. Consequently, all four elements are suitable powers of $y$ and are, thus, contained in $\<x,y\>$. We claim that $\<x,y\> = V$. Recall that $V = \< a,b\>$, so $V_{[00]} = \<a_{[00]},b_{[00]}\> = \<y_1, y_2^x\> \leq \<x,y\>$. In addition, $V_{[01]} = (V_{[00]})^x \leq \<x,y\>$. Using appropriate elements from $V_{[00]}$ and $V_{[01]}$ we can show that $(000 \ \ 01) \in \< V_{[00]}, V_{[01]}, y_3 \> \leq \<x,y\>$ and $(000 \ \ 1) \in \< V_{[00]}, y_4 \> \leq \<x,y\>$. Therefore, $\<x,y\> \geq \< V_{[00]}, V_{[00]}^{(000 \ \ 01)}, V_{[00]}^{(000 \ \ 1)} \> = \< V_{[000 \cup 001]}, V_{[01 \cup 001]}, V_{[1 \cup 001]} \>$. Now applying Lemma~\ref{lem:covering_v} twice gives $\< x,y \> = V$.
\end{example}

\subsection{\boldmath Generalisations of $V$} \label{ss:infinite_thompson_general}

There are numerous variations on $V$, and these are the focus of this section.

The \emph{Higman--Thompson group} $V_n$, for $n \geq 2$, is an infinite finitely presented group, introduced by Higman in \cite{ref:Higman74}. There is a natural action of $V_n$ on $n$-ary Cantor space $\mathfrak{C}_n = \{0,1,\dots,n-1\}^{\mathbb{N}}$, and $V_2$ is nothing other than $V$. The derived subgroup of $V_n$ equals $V_n$ for even $n$ and has index two for odd $n$. In both cases, $V_n'$ is simple and both $V_n$ and $V_n'$ are $2$-generated \cite{ref:Mason77}. 

The \emph{Brin--Thompson group} $nV$, for $n \geq 1$, acts on $\mathfrak{C}^n$ and was defined by Brin in \cite{ref:Brin04}. The groups $V=1V, 2V, 3V, \dots$ are pairwise nonisomorphic \cite{ref:BleakLanoue10}, simple \cite{ref:Brin10} and $2$-generated \cite[Corollary~1.3]{ref:Quick19}. 

The results about generating $V$ by transpositions have analogues for $V_n$ and $nV$ (see \cite[Section~3]{ref:DonovenHarper20}), and, in \cite[Theorem~1.1]{ref:Quick19}, Quick gives a presentation for $nV$ analogous to the one for $V$ in \eqref{eq:presentation_v}. Theorem~\ref{thm:donoven_harper} extends to all of these groups too \cite[Theorems~1 \& 2]{ref:DonovenHarper20}.

\begin{theorem} \label{thm:donoven_harper_generalisation}
For all $n \geq 2$, the Higman--Thompson groups $V_n$ and $V_n'$ are $\frac{3}{2}$-generated, and for all $n \geq 1$, the Brin--Thompson group $nV$ is $\frac{3}{2}$-generated.
\end{theorem}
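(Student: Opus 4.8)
The plan is to run, for each of the three families, the argument that proves Theorem~\ref{thm:donoven_harper}, substituting for every ingredient its analogue. Write $G$ for $V_n$, $V_n'$ or $nV$, and let $\mathfrak{D}$ be the space on which $G$ acts ($\mathfrak{C}_n$ in the first two cases, $\mathfrak{C}^n$ in the last). Two facts are needed as input. First, $G$ is generated by its \emph{transpositions} — the elements swapping two disjoint cones (or, for $nV$, two disjoint dyadic boxes) and fixing a complementary basis pointwise; this is available from \cite{ref:Mason77} for $V_n$ and $V_n'$ and from \cite{ref:Brin04,ref:Quick19} for $nV$ (see \cite[Section~3]{ref:DonovenHarper20}). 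Second, $G$ admits an explicit $2$-element generating set $G = \langle a, b \rangle$, again from \cite{ref:Mason77,ref:Quick19}. From generation by transpositions one deduces, exactly as Lemma~\ref{lem:covering_v} is deduced for $V$, a covering lemma: if $U_1,\dots,U_k$ are clopen subsets of $\mathfrak{D}$ with $U_i \cap U_{i+1} \neq \emptyset$ and $\bigcup_{i=1}^k U_i = \mathfrak{D}$, then $G_{[U_i]} \cong G$ for each $i$ and $G = \langle G_{[U_1]}, \dots, G_{[U_k]} \rangle$. For $V_n'$ one works throughout with \emph{even} products of transpositions (the subgroup these generate is $V_n'$, using the subdivision identity $(u \, v) = (u0 \, v0)(u1 \, v1)$ to see it is not proper in $V_n$ itself), so the local subgroups $G_{[U_i]}$ are the copies of $V_n'$.

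Next I would normalise the element. Given nontrivial $x \in G$, since $x$ moves some point it moves some cone, and by passing to a deep enough sub-cone we may take a cone $U = u\mathfrak{D}$ with $Ux \cap U = \emptyset$ on which $x$ acts by a single prefix substitution mapping $U$ isomorphically onto the disjoint cone $U' := Ux$; conjugating (harmless since $\langle x^g, y^g\rangle = G \iff \langle x,y\rangle = G$) we may assume $U$ and $U'$ are two cones of a convenient basis of $\mathfrak{D}$. This is precisely the configuration of Example~\ref{ex:donoven_harper}, where $x = (00\ 01)$.

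Then I would construct the complement $y$. Place localised copies of the fixed generators, $y_1 = a_{[U]}$ and $y_2 = b_{[U']}$, so that $y_2^{\,x} = b_{[U]}$ and hence $\langle y_1, y_2^{\,x} \rangle = \langle a_{[U]}, b_{[U]} \rangle = G_{[U]}$, whence also $G_{[U']} = G_{[U]}^{\,x}$. To these adjoin finitely many \emph{connecting} elements $y_3, y_4, \dots$, each supported on a clopen set disjoint from $U \cup U'$ and from one another, chosen so that — combined with suitable elements of $G_{[U]}$ and $G_{[U']}$ — they yield transpositions (for $V_n'$, even products of transpositions) moving sub-cones of $U$ so as to produce a family of clopen sets covering $\mathfrak{D}$ with the chain-overlap pattern required by the covering lemma. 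Arranging the $y_i$ to have pairwise coprime orders and pairwise disjoint supports makes them commute and makes each one a power of $y := y_1 y_2 \cdots$; thus every $y_i$ lies in $\langle x, y\rangle$, and the inclusion-chase of Example~\ref{ex:donoven_harper} then forces $\langle x, y \rangle = G$.

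The main obstacle is building the connecting elements in the required generality. For $V_n$ with $n > 2$ the larger alphabet changes the combinatorics of which sub-cones must be glued to which; for $V_n'$ with $n$ odd one must track, at every stage, that the elements used lie in the index-two subgroup; and for $nV$ the clopen subsets of $\mathfrak{C}^n$ are unions of dyadic boxes rather than cones, so the covering lemma and the placement of commuting coprime-order connectors must be carried out in the higher-dimensional tree-pair calculus. Checking that elements of the needed finite orders with the prescribed local actions exist in each $G_{[\cdot]}$ (they do, each being a copy of an infinite group with elements of every finite order) and that the resulting cover has connected overlaps is the routine but lengthy part; the conceptual content is already contained in Lemma~\ref{lem:covering_v} and Example~\ref{ex:donoven_harper}.
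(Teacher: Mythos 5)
Note first that the survey does not actually prove Theorem~\ref{thm:donoven_harper_generalisation}: it quotes it from \cite[Theorems~1 \& 2]{ref:DonovenHarper20} and only sketches the method for $V$ itself (Lemma~\ref{lem:covering_v} and Example~\ref{ex:donoven_harper}). Your proposal follows exactly that method --- generation by transpositions, a covering lemma, localised copies of a fixed generating pair placed on a cone $U$ and on its disjoint image $U' = Ux$, plus commuting connectors of coprime orders --- so in outline you are on the same route as the cited source.

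There is, however, one load-bearing step you do not justify. The extraction trick (each $y_i$ is a power of $y = y_1y_2y_3\cdots$) requires $y_1 = a_{[U]}$ and $y_2 = b_{[U']}$ to have finite orders, pairwise coprime and coprime to the orders of the connectors. This cannot be ``arranged'' after the fact: it is a condition on the seed generating pair $(a,b)$ itself. For $V$ it holds because the generators of Figure~\ref{fig:elements} have orders $6$ and $7$, but the results you cite for the other families (\cite{ref:Mason77} for $V_n$, $V_n'$ and \cite{ref:Quick19} for $nV$) give only $2$-generation, with no control on the orders of the generators. Moreover, the construction genuinely needs a \emph{two}-element torsion generating set of coprime orders: in the worst case $x$ is an involution, so $U$ and $U'$ are the only regions available for parking localised generators; two generators localised inside the same region do not have disjoint supports (so cannot both be recovered as powers of $y$), while localising them in disjoint subcones of $U$ destroys the property that together they generate the full local copy $G_{[U]}$. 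So for each of $V_n$, $V_n'$ and $nV$ you must additionally prove that the group is generated by two elements of finite coprime order (which can furthermore be kept coprime to the connectors' orders); this is part of the extra generation input supplied in \cite{ref:DonovenHarper20} alongside the transposition and covering lemmas, and without it the assertion that all the $y_i$ are suitable powers of $y$ has no justification. The remaining issues you flag (normalising $x$ so that it maps a cone, or a dyadic box for $nV$, off itself by a prefix substitution; parity bookkeeping for $V_n'$ with $n$ odd) are indeed the routine parts and match the cited argument.
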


In particular, the groups $V_n$ when $n$ is odd give infinitely many examples of infinite $\frac{3}{2}$-generated groups that are not simple.

As we introduced them, the Higman--Thompson group $V_n'$ is a simple subgroup of $\Homeo(\C_n)$ and the Brin--Thompson group $nV$ is a simple subgroup of $\Homeo(\C^n)$. Since $\C^n$ ($n$th power of $\C$) and $\C_n$ ($n$-ary Cantor space) are both homeomorphic to $\C$, all of these groups can be viewed as subgroups of $\Homeo(\C)$. Recent work of Bleak, Elliott and Hyde \cite{ref:BleakElliottHyde}, highlights that these groups, and numerous others (such as Nekrashevych's simple groups of dynamical origin), can be viewed within one unified dynamical framework. 

A group $G \leq \Homeo(\C)$ is said to be \emph{vigorous} if for any clopen subsets $\emptyset \subsetneq B,C \subsetneq A \subseteq \C$ there exists $g \in G$ supported on $A$ such that $Bg \subseteq C$. In \cite{ref:BleakElliottHyde}, Bleak, Elliot and Hyde study vigorous groups and, among much else, prove that a perfect vigorous group $G \leq \Homeo(\C)$ is simple if and only if it is generated by its elements of \emph{small support} (namely, elements supported on a proper clopen subset of $\C$). To give a flavour of how these dynamical properties suitably capture the ideas we have seen in this section, compare the following, which is \cite[Lemma~2.18 \& Proposition~2.19]{ref:BleakElliottHyde}, with Lemma~\ref{lem:covering_v}.

\begin{lemma} \label{lem:covering_vigorous}
Let $G$ be a vigorous group that is generated by its elements of small support. Let $U_1, \dots, U_k \subseteq \C$ be clopen subsets satisfying $U_i \cap U_{i+1} \neq \emptyset$ and $\bigcup_{i=1}^k U_i = \C$. Then $G = \< G_{[U_1]}, \dots, G_{[U_k]} \>$. Moreover, if $G$ is simple, then for each $i$ the group $G_{[U_i]}$ is a simple vigorous group.
\end{lemma}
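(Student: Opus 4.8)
The plan is to mimic the proof strategy of Lemma~\ref{lem:covering_v}, but replacing the explicit "generated by transpositions" structure with the two defining dynamical hypotheses: vigour and being generated by elements of small support. The key mechanism is that, in a vigorous group generated by small-support elements, the subgroup $G_{[U]}$ supported on a clopen set $U$ is itself "large" in $U$ — in particular, it acts transitively enough on clopen subsets of $U$ to let one move support around and stitch the pieces $U_1,\dots,U_k$ together.

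First I would reduce to the case $k=2$ by the usual chaining argument: if $W = U_1 \cup \cdots \cup U_{k-1}$, then by induction $\langle G_{[U_1]},\dots,G_{[U_{k-1}]}\rangle = G_{[W]}$, and since $W \cap U_k \supseteq U_{k-1} \cap U_k \neq \emptyset$ we are reduced to showing $G_{[W \cup U_k]} = \langle G_{[W]}, G_{[U_k]}\rangle$; note $W$, $U_k$, $W \cap U_k$ and $W \cup U_k$ are all clopen, so nothing is lost. So the crux is: for clopen $U, V \subseteq \C$ with $U \cap V \neq \emptyset$, show $\langle G_{[U]}, G_{[V]}\rangle = G_{[U \cup V]}$. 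One inclusion is trivial. For the reverse, take $g \in G_{[U \cup V]}$; we want to write $g$ as a product of elements supported on $U$ and on $V$. Using that $G$ is generated by its small-support elements and that $g$ is supported on the clopen set $U\cup V$, one first argues that $g$ lies in the subgroup generated by elements of small support that are themselves supported inside $U\cup V$ (this is where one needs a localisation statement from \cite{ref:BleakElliottHyde}, presumably that small-support generation passes to $G_{[A]}$ for clopen $A$ — indeed the "moreover" clause of the present lemma, once proved, gives exactly this for $G_{[U\cup V]}$, so one should prove the two assertions together or cite the relevant lemma). So it suffices to treat $g = h$ with $\mathrm{supp}(h) = K \subsetneq U \cup V$ clopen and proper. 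Pick a nonempty clopen $D \subseteq U \cap V$; by vigour applied inside $U \cup V$ there is $c \in G_{[U\cup V]}$ with $Kc \subseteq$ (something small) — more usefully, one uses vigour to find $c$ supported in $U\cup V$ pushing the "$V$-part" of $K$ entirely into $U$, or conversely, so that a conjugate $h^c$ is supported in $U$ (or $V$) alone; then $h = (h^c)^{c^{-1}}$ and writing $c$ itself as a product of small-support elements supported in $U\cup V$ and iterating/pushing down the support reduces the problem. I would set this up as an induction on (the "complexity" of) the clopen support $K$ — e.g. on the number of basic clopen cylinders needed to express $K$ relative to a common refinement of the bases defining $U$, $V$ — with vigour providing, at each step, the conjugating element that strictly simplifies the support by sliding a piece across $U \cap V$.

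For the "moreover" clause, suppose $G$ is simple. That $G_{[U_i]}$ is vigorous is essentially inherited: given clopen $\emptyset \subsetneq B, C \subsetneq A \subseteq U_i$, vigour of $G$ supplies $g \in G$ supported on $A \subseteq U_i$ with $Bg \subseteq C$, and $g$ being supported on $A$ means $g \in G_{[U_i]}$, so $G_{[U_i]}$ is vigorous with the same witness. It remains to see $G_{[U_i]}$ is simple; by the Bleak–Elliott–Hyde criterion ("a perfect vigorous group is simple iff it is generated by its small-support elements"), it suffices to check $G_{[U_i]}$ is perfect and generated by its small-support elements. Perfectness of $G_{[U_i]}$ should follow from a standard commutator/fragmentation argument using vigour (any element supported on a small clopen set is a commutator of conjugates that can be disengaged spatially — this is the classical argument, e.g. Higman's, adapted to this setting, and is surely available in \cite{ref:BleakElliottHyde}), and small-support generation of $G_{[U_i]}$ is exactly the localised statement flagged above, which one obtains by applying the $k=2$ case of this very lemma with a covering of $U_i$ by two proper clopen subsets and inducting.

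The main obstacle I expect is the localisation step: pinning down precisely why an element $g$ supported on a clopen proper subset of $\C$ can be written using only small-support elements whose supports stay inside that subset (rather than roaming over all of $\C$), and why vigour inside $U\cup V$ — not just vigour of $G$ on $\C$ — is available. Both of these are "obvious" morally but need the internal structure theory of vigorous groups from \cite{ref:BleakElliottHyde}; in a survey one would simply invoke the relevant lemmas there. The inductive "slide the support across $U\cap V$" argument is the technical heart, but it is a direct generalisation of the transposition-juggling in Lemma~\ref{lem:covering_v} and should go through cleanly once the localisation is in hand.
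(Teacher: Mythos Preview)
The paper does not actually prove Lemma~\ref{lem:covering_vigorous}. It is stated as a quotation of \cite[Lemma~2.18 \& Proposition~2.19]{ref:BleakElliottHyde} (``compare the following, which is \cite[Lemma~2.18 \& Proposition~2.19]{ref:BleakElliottHyde}, with Lemma~\ref{lem:covering_v}''), and no argument is given here; this is a survey, and the result is simply cited. So there is no ``paper's own proof'' against which to compare your attempt.

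That said, your outline is broadly the right shape and is in the spirit of how such covering lemmas are proved (and how Lemma~\ref{lem:covering_v} is proved for $V$): reduce to two overlapping clopen sets, use vigour to conjugate the support of a small-support element entirely into one of the two pieces, and iterate. You are also right to flag the localisation step as the main technical point, and right that it is handled inside \cite{ref:BleakElliottHyde} rather than here. One caution: your inductive scheme for the $k=2$ case is slightly circular as written, since the base case (applying the induction hypothesis to get $\langle G_{[U_1]},\dots,G_{[U_{k-1}]}\rangle = G_{[W]}$) already presumes the statement for a proper clopen subset $W \subsetneq \C$, which is precisely the localisation you defer; in \cite{ref:BleakElliottHyde} this is untangled by first establishing that $G_{[A]}$ is itself vigorous and generated by its small-support elements for any proper clopen $A$, and only then running the covering argument.
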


Bleak, Elliott and Hyde go on to prove that every finitely generated simple vigorous group is $2$-generated \cite[Theorem~1.12]{ref:BleakElliottHyde}. Are all such groups $\frac{3}{2}$-generated? Bleak, Donoven, Harper and Hyde \cite{ref:BleakDonovenHarperHyde} recently proved that $u(G) \geq 1$.

\begin{theorem} \label{thm:donoven_harper_hyde}
Let $G \leq \Homeo(\C)$ be a finitely generated simple vigorous group. Then there exists an element $s \in G$ of small support and order 30 such that for every nontrivial $x \in G$ there exists $y \in s^G$ such that $\< x, y \> = G$.
\end{theorem}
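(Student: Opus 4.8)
The plan is to follow the template of Example~\ref{ex:donoven_harper} (and of Theorem~\ref{thm:donoven_harper}), but to make \emph{both} the witnessing element and the argument uniform over all nontrivial $x$, using the dynamics of vigorous groups in place of the ad hoc manipulations available for $V$. As in the $V$ case, to conclude $\langle x, y \rangle = G$ it suffices to exhibit clopen sets $U_1,\dots,U_k$ with $U_i \cap U_{i+1} \neq \emptyset$, $\bigcup_i U_i = \C$, and $G_{[U_i]} \leq \langle x, y\rangle$ for each $i$, and then apply Lemma~\ref{lem:covering_vigorous}. Since each $G_{[U_i]}$ is again a simple vigorous group, hence generated by its small-support elements, it is enough in practice to install one ``seed'' restriction $G_{[U_0]}$ inside $\langle x, y \rangle$ and then produce enough further elements of $\langle x, y\rangle$ to conjugate this seed along a covering chain.

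\textbf{The witnessing element.} I would take $s$ to be a product of pairwise commuting elements of small support, supported on pairwise disjoint clopen subsets of $\C$, whose orders have least common multiple $30$ (say components of orders $2$, $3$ and $5$); each component is then a power of $s$, so any subgroup of $G$ containing a conjugate $s^g$ automatically contains the corresponding conjugates of all the components. The components get distinct roles: one is a \emph{seed}, designed so that, after being split across the two halves of its supporting clopen set and after one half is transported onto the other, the two resulting pieces generate a restricted copy of $G$ on a clopen set --- exactly as $\langle a_{[00]}, b_{[00]} \rangle = V_{[00]}$ in Example~\ref{ex:donoven_harper}; the remaining components are \emph{spreaders}, long-range ``transposition-like'' elements that will let us realise a covering chain once the seed is in place. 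Checking that the value $30$ suffices, and that the chosen components genuinely have the prescribed orders and jointly generate a single element $s$, is concrete but delicate bookkeeping.

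\textbf{Uniformity over $x$.} Given nontrivial $x \in G$, choose a point moved by $x$; by total disconnectedness there are disjoint clopen sets $W_0, W_1 \subseteq \C$ with $W_1 x = W_0$ --- this is the analogue of choosing $g$ in Example~\ref{ex:alternating} so that $x$ carries a point out of the first factor. Using the flexibility of vigorous groups developed in \cite{ref:BleakElliottHyde} (one can move any finite configuration of disjoint clopen sets to any other), conjugate $s$ by a suitable $g \in G$ so that the seed component of $s^g$ is supported on $W_0 \cup W_1$ with its two halves on $W_0$ and $W_1$. Then $s^g$ and $(s^g)^x$ both lie in $\langle x, s^g\rangle$, and $x$ carries the $W_1$-half of the seed onto $W_0$, so $\langle x, s^g\rangle$ contains both seed pieces on the common clopen set $W_0$; arranging that these generate $G_{[W_0]}$ is the crux (see below). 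Finally, the spreader components of $s^g$, together with $G_{[W_0]}$ and $x$, are used to build a covering chain $U_1,\dots,U_k$ with each $G_{[U_i]} \leq \langle x, s^g\rangle$, and Lemma~\ref{lem:covering_vigorous} gives $\langle x, s^g\rangle = G$; as $s^g \in s^G$, this is the required uniform witness.

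\textbf{Main obstacle.} The genuinely hard step is extracting a \emph{full} restricted copy $G_{[W_0]}$ from $\langle x, s^g\rangle$ when we have no control over the restriction $x|_{W_1}\colon W_1 \to W_0$, so the two seed pieces land on $W_0$ via two \emph{a priori unrelated} identifications with $G$. Unlike the finite setting there is no reduction to elements $x$ of prime order --- $x$ may have infinite order --- so one cannot lean on fixed-point-ratio estimates as in Lemma~\ref{lem:spread}; instead one must exploit the full flexibility of vigorousness (enough room to realise many simultaneous clopen configurations) together with several commuting components and commutator manipulations inside $\langle x, s^g\rangle$ to ``correct'' for the unknown local behaviour of $x$. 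Carrying this out while keeping the order of $s$ down to $30$ is the technical heart of \cite{ref:BleakDonovenHarperHyde}.
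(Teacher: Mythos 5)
The survey does not actually prove Theorem~\ref{thm:donoven_harper_hyde}: it is quoted from Bleak--Donoven--Harper--Hyde \cite{ref:BleakDonovenHarperHyde}, which is cited as work in preparation, so there is no in-paper argument to compare your route against. Judged on its own terms, what you have written is a plausible plan rather than a proof, and the plan has genuine gaps precisely at the points that carry the theorem. First, you never establish that an arbitrary finitely generated simple vigorous group $G$ contains an element of small support and order exactly $30$ with your prescribed seed/spreader structure; unlike $V$, where explicit tree-pair elements of orders $2,3,5$ with chosen supports can be written down (as in Example~\ref{ex:donoven_harper}), for an abstract vigorous $G$ the existence of torsion with prescribed orders and prescribed dynamics is itself a substantive claim that needs an argument from vigorousness, and you give none. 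Second, your seed construction is modelled on $\<a_{[00]},b_{[00]}\>=V_{[00]}$, which uses the canonical isomorphisms $V_{[U]}\cong V$ and the explicit generating pair $V=\<a,b\>$. Lemma~\ref{lem:covering_vigorous} only tells you that $G_{[W_0]}$ is a simple vigorous group; it gives you no canonical identification of $G_{[W_0]}$ with $G$, no finite generation of $G_{[W_0]}$, and certainly no distinguished generating pair, so the very design of the seed component (``its two halves generate a restricted copy of $G$'') is not justified for general $G$.

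Third, and most importantly, you explicitly leave the central step open: extracting a full copy of $G_{[W_0]}$ inside $\<x,s^g\>$ when the identification $x|_{W_1}\colon W_1\to W_0$ is completely uncontrolled, and then promoting this to a covering chain to which Lemma~\ref{lem:covering_vigorous} applies. Calling this ``the crux'' and ``the technical heart of \cite{ref:BleakDonovenHarperHyde}'' is an accurate assessment, but it means the argument defers exactly the content of the theorem to the paper being cited. A smaller point of the same kind: vigorousness gives, for clopen $\emptyset\subsetneq B,C\subsetneq A$, an element supported on $A$ with $Bg\subseteq C$ (a containment), so your assertion that one can move any finite configuration of disjoint clopen sets onto any other, with the seed halves landed exactly on $W_0$ and $W_1$ in a prescribed way, also needs proof rather than appeal to ``flexibility''. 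In summary: the covering-lemma framework and the choice of a commuting-component witness of coprime orders are sensible and consistent with the general methods surveyed in Sections~\ref{ss:infinite_thompson_v}--\ref{ss:infinite_thompson_general}, but the existence of the order-$30$ witness and the uniform seed-to-full-restriction step are missing, so the proposal does not constitute a proof.
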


Theorem~\ref{thm:donoven_harper_hyde} gives $u(G) \geq 1$ for all the simple groups $G$ in Theorem~\ref{thm:donoven_harper_generalisation}. In particular, we obtain a strong version of Theorem~\ref{thm:donoven_harper} on Thompson's group $V$, improving $s(V) \geq 1$ to $u(V) \geq 1$. It is possible to obtain stronger results on the (uniform) spread of $V$ and its generalisations (and $T$, discussed below), and this is the subject of current work of the author and others (e.g. \cite{ref:BleakDonovenHarperHyde}).

\subsection{\boldmath Thompson's groups $T$ and $F$} \label{ss:infinite_thompson_t}

In this final section, we discuss generating sets of Thompson's groups $T$ and $F$. We begin with $T$, which is a simple $2$-generated group, so it is natural to study its (uniform) spread. In 2022, Bleak, Harper and Skipper \cite{ref:BleakHarperSkipper} proved $u(T) \geq 1$.

\begin{theorem} \label{thm:bleak_harper_skipper}
There exists an element $s \in T$ such that for every nontrivial $x \in T$ there exists $y \in s^T$ such that $\< x, y \> = T$.
\end{theorem}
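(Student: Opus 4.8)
The plan is to argue constructively, in the spirit of Donoven and Harper's proof of Theorem~\ref{thm:donoven_harper} (Example~\ref{ex:donoven_harper}) and the ``covering lemma'' philosophy behind Lemmas~\ref{lem:covering_symmetric} and~\ref{lem:covering_v}. Since $T$ acts by orientation-preserving piecewise-linear homeomorphisms of the circle $\S^1$, the building blocks are the \emph{arc subgroups} $T_{[A]}$ of elements supported on a dyadic arc $A \subsetneq \S^1$. First I would establish (or recall) the circle-analogue of Lemma~\ref{lem:covering_v}: if $U_1, \dots, U_k \subsetneq \S^1$ are dyadic arcs, indexed cyclically so that $\overline{U_i} \cap \overline{U_{i+1}} \neq \emptyset$ for all $i \in \mathbb{Z}/k\mathbb{Z}$ and $\bigcup_i U_i = \S^1$, then $T = \langle T_{[U_1]}, \dots, T_{[U_k]}\rangle$. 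The point of the cyclic overlap condition is that it forces the generated subgroup to contain the fixed-point-free elements (the ``rotations''), which no single arc subgroup sees. Granting this, it suffices to exhibit one element $s \in T$ such that for every nontrivial $x \in T$ there is a conjugate $s^g$ with $\langle x, s^g \rangle$ containing such a family of arc subgroups.

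The decisive difficulty, and the reason the argument for $V$ cannot be copied verbatim, is that each arc subgroup $T_{[A]}$ is a copy of Thompson's group $F$, which is torsion-free. In the proof of Theorem~\ref{thm:donoven_harper} one takes $s$ to be a product of pairwise-commuting elements of pairwise-coprime \emph{finite} orders, each supported on a small clopen set, so that every factor is a power of $s$, and factors sitting on disjoint cones are fused into a full copy of $V_{[U]}$ after conjugating one of them by a suitable element built from $x$. This fails completely for $T$: a nontrivial finite-order element of $T$ fixes no complementary arc pointwise, so it cannot be supported on a proper arc, and hence there are no ``localised torsion pieces'' to work with; moreover a full copy of $F_{[U]}$ is $2$-generated but torsion-free, so it cannot be assembled from disjointly-supported powers of a single $s$. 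Therefore $x$ must be used actively to manufacture the first local copy of $F$, and this must be engineered uniformly in $x$.

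Concretely, I would take $s$ to be a single element of infinite order with tightly controlled dynamics, say an element whose fixed-point set in $\S^1$ is a single point $p$, acting on $\S^1 \setminus \{p\} \cong \mathbb{R}$ as a fixed-point-free, ``slowly translating'' piecewise-linear map with germs at $p$ chosen so that the overgroups of $s$ in $T$ are as restricted as possible. Given a nontrivial $x \in T$, I would first fix a dyadic arc $I$ with $p \notin \overline{I}$, chosen small enough that $\overline{Ix} \cap \overline{I} = \emptyset$, and then choose $g$ so that $s^g$ is positioned relative to $I$ and $Ix$ in such a way that: (i) the translates $I (s^g)^n$ for $n \in \mathbb{Z}$ sweep all of $\S^1 \setminus \{p\}$ with consecutive overlaps, so that conjugation by powers of $s^g$ spreads any arc subgroup on $I$ around the whole punctured circle; (ii) $x$ carries $I$ across the gap at $p$, closing this family into a genuinely cyclic cover; and (iii) a suitable word $w$ in $x$ and $s^g$ fixes $I$ setwise and acts on it chaotically enough that, together with a localised commutator of conjugates of $s^g$, it generates all of $F_{[I]}$. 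Applying the covering lemma to the resulting cyclic cover then gives $\langle x, s^g \rangle = T$. (An alternative to (iii), closer in spirit to the proof of Theorem~\ref{thm:donoven_harper_hyde}, would be to arrange for $x$ to carry one localised ingredient of $s^g$ from $Ix$ onto $I$, where it meets a second one already present on $I$, the two together generating $F_{[I]}$.)

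The hard part will be step~(iii) together with the requirement that a single $s$, and a scheme for choosing $g$, works \emph{uniformly for all} nontrivial $x$: one must treat, among other cases, elements $x$ of finite order, elements $x \in F$ (which all fix the dyadic basepoint), and elements $x$ of arbitrarily small support, and in each case make explicit the word in $x$ and $s^g$ that both builds $F_{[I]}$ and bridges the fixed point $p$, and prove it generates. Establishing the $T$-covering lemma (in particular that cyclic overlap genuinely forces the rotations into the generated group) and selecting the prescribed dynamics of $s$ so that the overgroup analysis in step~(i) is rigid are the remaining points needing care; I would expect the overall structure to mirror Donoven and Harper's, with the torsion-freeness of $F$ absorbed into the bootstrap that produces $F_{[I]}$.
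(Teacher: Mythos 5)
Your overall philosophy (a covering lemma for $T$ by arc subgroups isomorphic to $F$, then generating local copies of $F$ inside $\langle x, s^g\rangle$) is the right one, and your diagnosis of why the $V$-argument cannot be copied (torsion-free arc subgroups, no localised torsion pieces) is accurate. But the proposal has a genuine gap exactly at the point you flag as ``the hard part'': step~(iii), producing a full copy of $F_{[I]}$ from a word in $x$ and $s^g$, is the actual content of the theorem, and you give no mechanism for verifying such a generation statement. Proving that two explicit piecewise-linear maps generate $F$ is not something one can get by asking a word to act ``chaotically enough''; in the paper this is done by an entirely different kind of tool, namely Golan's generation criterion (Theorem~\ref{thm:golan_criterion}: a subgroup $H\leq F$ equals $F$ iff ${\rm Cl}(H)\geq F'$ plus the germ conditions at $0$ and $1$), with the closure condition ${\rm Cl}(H)\geq F'$ checked combinatorially via the Stallings $2$-core of Guba--Sapir. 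Without this (or a substitute of comparable strength), your scheme does not close. The paper also does not attempt a single uniform-in-$x$ dynamical scheme as you do: it splits into $|x|$ infinite, where a dynamical argument shows \emph{any} infinite-order $s$ has a conjugate generating with $x$, and $|x|$ finite, where the classification of torsion elements of $T$ up to conjugacy by rotation number reduces the problem to one explicit $x$ per prime rotation number, after which $T=\langle x,s\rangle$ is verified for the concrete $s$ of Figure~\ref{fig:elements} (an element of $F$ supported on a proper arc, quite unlike your proposed $s$ with a single fixed point) via Lemma~\ref{lem:covering_t} applied to the intervals $(0,\tfrac78)x^i$ and the Golan/core computation of Example~\ref{ex:bleak_harper_skipper}. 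Your proposal exploits neither the torsion conjugacy classification nor any generation criterion, so the reduction to finitely many explicit verifications never happens.

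Two smaller points. First, your covering lemma is misstated: requiring only $\overline{U_i}\cap\overline{U_{i+1}}\neq\emptyset$ is not enough if the arcs are closed and meet only at endpoints (e.g.\ $U_1=[0,\tfrac12]$, $U_2=[\tfrac12,1]$ generate a subgroup fixing $0$ and $\tfrac12$); as in Lemma~\ref{lem:covering_t} one needs the open interiors to cover $\S^1$, and then no separate overlap hypothesis is needed. Second, the claim implicit in step~(i) that the maximal overgroups of your chosen $s$ can be made ``as restricted as possible'' plays no role in the paper's argument and would itself require a substantial overgroup analysis in $T$ that is not available; the paper instead gets rigidity from the explicit conjugates $s$ and $s^x$ and the criterion of Theorem~\ref{thm:golan_criterion}.
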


\begin{corollary} \label{cor:bleak_harper_skipper}
Thompson's group $T$ is $\frac{3}{2}$-generated.
\end{corollary}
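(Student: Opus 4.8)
The plan is to deduce Corollary~\ref{cor:bleak_harper_skipper} directly from Theorem~\ref{thm:bleak_harper_skipper}, exactly as Corollary~\ref{cor:x_radical} is deduced from its parent theorem. Recall that a group $G$ is $\frac{3}{2}$-generated precisely when $s(G) \geq 1$, that is, when every nontrivial element lies in a generating pair, and recall further that $u(G) \geq 1$ implies $s(G) \geq 1$ (an element of the witnessing conjugacy class is in particular an element of $G$). So the entire content of the corollary is already contained in the theorem, and the proof is a one-line unwinding of definitions.

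Concretely, here is how I would carry it out. First, fix the element $s \in T$ provided by Theorem~\ref{thm:bleak_harper_skipper}. Second, let $x \in T$ be an arbitrary nontrivial element; by the theorem there exists $y \in s^T$ with $\langle x, y \rangle = T$, and in particular $x$ is contained in the generating pair $\{x, y\}$. Third, since $x$ was an arbitrary nontrivial element of $T$, every nontrivial element of $T$ lies in a generating pair, which is exactly the statement that $T$ is $\frac{3}{2}$-generated. That completes the argument.

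\begin{proof}
Let $s \in T$ be the element supplied by Theorem~\ref{thm:bleak_harper_skipper}. Given any nontrivial $x \in T$, the theorem furnishes $y \in s^T \subseteq T$ with $\langle x, y \rangle = T$, so $x$ lies in the generating pair $\{x,y\}$. As $x$ was arbitrary, every nontrivial element of $T$ is contained in a generating pair, i.e. $T$ is $\frac{3}{2}$-generated.
\end{proof}

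There is essentially no obstacle here: the corollary is a formal weakening of the theorem (passing from uniform spread at least $1$ to spread at least $1$, hence to $\frac{3}{2}$-generation), and all the genuine work — the construction of a suitable small-support element $s \in T$ and the verification that each nontrivial $x$ generates $T$ together with some conjugate of $s$ — is precisely what Theorem~\ref{thm:bleak_harper_skipper} asserts. The only thing to be careful about is not to claim anything stronger than stated: the corollary records only $\frac{3}{2}$-generation, and indeed $s(T)$ and $u(T)$ themselves are left for the ongoing work alluded to at the end of Section~\ref{ss:infinite_thompson_general}.
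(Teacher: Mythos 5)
Your proposal is correct and is exactly the intended deduction: the paper states the corollary as an immediate consequence of Theorem~\ref{thm:bleak_harper_skipper}, since $u(T)\geq 1$ trivially gives $s(T)\geq 1$, i.e.\ $\frac{3}{2}$-generation. Nothing further is needed.
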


The element $s$ in Theorem~\ref{thm:bleak_harper_skipper} can be chosen as the one in Figure~\ref{fig:elements}. Moreover, in \cite[Proposition~3.1]{ref:BleakHarperSkipper}, it is shown that if we restrict to elements $x$ of infinite order, then we can choose $s$ to be any infinite order element, that is to say, for any two infinite order elements $x,s \in T$ there exists $g \in T$ such that $\< x, s^g \> = T$. This naturally raises the question of whether an arbitrary infinite order element can be chosen for $s$ in Theorem~\ref{thm:bleak_harper_skipper} (see \cite[Question~1]{ref:BleakHarperSkipper}).

We now turn to Thompson's group $F$. This is $2$-generated since if we write $x_0 = (00, 01, 1) \mapsto (0, 10, 11)$ and $x_1 = (0, 100, 101, 1) \mapsto (0, 10, 110, 111)$, then, by \cite[Theorem~3.4]{ref:CannonFloydParry96} for example, $F = \< x_0, x_1 \>$. Moreover, if we inductively define $x_{i+1} = x_i^{x_0}$ for all $i \geq 1$, then the elements $x_0, x_1, x_2, \dots$ witness the following well-known presentation 
\[
F = \< x_0, x_1, x_2, \dots \mid \text{$x_j^{x_i} = x_{j+1}$ for $i < j$} \>.
\]

However, $F$ is not a simple group. Considering $F$ in its natural action on $[0,1]$, the homomorphism $\pi\:F \to \Int^2$ defined as $f \mapsto (\log_2{f'(0^+)}, \log_2{f'(1^-)})$ is surjective and the kernel of $\pi$ is the derived subgroup $F'$, which is simple. Moreover, $F'$ is the unique minimal normal subgroup of $F$, so the nontrivial normal subgroups of $F$ are in bijection with normal subgroups of $F/F' = \Int^2$ (see \cite[Section~4]{ref:CannonFloydParry96} for proofs of these claims). In particular, $F$ is not $\frac{3}{2}$-generated since it has a proper noncyclic quotient. 

Now $F'$ is not $\frac{3}{2}$-generated for a different reason: it is not finitely generated. Indeed, for any nontrivial normal subgroup $N = \pi^{-1}(\< (a_0,a_1), (b_0,b_1) \>)$,  if $\{ a_0, b_0 \} = \{ 0 \}$ or $\{ a_1, b_1 \} = \{ 0 \}$, then $N$ is not finitely generated. To see this in the former case, for finitely many elements each of which acts as the identity on an interval containing $0$, there exists an interval containing $0$ on which they all act as the identity, so they generate a proper subgroup of $N$ (for the latter case, replace $0$ with $1$). However, the following recent theorem of Golan \cite[Theorem~2]{ref:GolanGen} shows that these are the only obstructions to $\frac{3}{2}$-generation.

\begin{theorem} \label{thm:golan}
Let $(a_0,a_1), (b_0,b_1) \in \Int^2$ with $\{a_0,b_0\} \neq \{0\}$ and $\{a_1,b_1\} \neq \{0\}$. Let $x \in F$ be a nontrivial element such that $\pi(x) = (a_0,a_1)$. Then there exists $y \in F$ such that $\pi(y) = (b_0,b_1)$ and $\<x,y\> = \pi^{-1}(\<(a_0,a_1),(b_0,b_1)\>$.
\end{theorem}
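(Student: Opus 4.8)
The plan is to reduce the statement to a single containment and then build $y$ by a dynamical argument that mirrors the covering lemmas used earlier in the survey. Write $\pi\colon F\to\Int^2$ for the abelianisation map, put $L=\<(a_0,a_1),(b_0,b_1)\>\leq\Int^2$ and $K=\pi^{-1}(L)$, so that $F'=[F,F]=\ker\pi\leq K$. Since $F/F'$ is abelian, conjugation by elements of $F$ fixes every $\pi$-value, so I am free to replace $x$ by any $F$-conjugate of itself. First I would observe that it suffices to produce $y\in F$ with $\pi(y)=(b_0,b_1)$ and $F'\leq\<x,y\>$: given this, $\pi(\<x,y\>)=\<\pi(x),\pi(y)\>=L$ while $F'\leq\<x,y\>\leq K=\pi^{-1}(L)$, forcing $\<x,y\>=K$. (Conversely, the hypotheses $\{a_0,b_0\}\neq\{0\}$ and $\{a_1,b_1\}\neq\{0\}$ are exactly what makes $K$ finitely generated — if either fails then $K$ lies in the stabiliser of a germ at an endpoint, which is a strictly increasing union of copies of $F$ and hence not finitely generated — so they cannot be dropped.)

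The mechanism for the containment $F'\leq\<x,y\>$ is a covering argument. Because elements of $F$ have only finitely many breakpoints, an element of $F'$ is precisely one that is the identity on $[0,\varepsilon]\cup[1-\varepsilon,1]$ for some $\varepsilon>0$; hence $F'=\bigcup F_{[p,q]}$ over proper dyadic subintervals $[p,q]\subsetneq(0,1)$, where $F_{[p,q]}\cong F$ denotes the copy of $F$ supported on $[p,q]$. Using the standard covering lemma for $F$ — if dyadic intervals $I_1,I_2\subseteq(0,1)$ overlap in a set with nonempty interior then $\<F_{[I_1]},F_{[I_2]}\>=F_{[I_1\cup I_2]}$ — it is therefore enough to arrange that $H:=\<x,y\>$ contains $F_{[I_0]}$ for one proper dyadic interval $I_0$ and that $H$ contains an element with germ-slope $\neq 1$ at $0$ and one with germ-slope $\neq 1$ at $1$. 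Indeed, iterating such an \emph{expanding} element pushes $I_0$ arbitrarily close to an endpoint, while composing with elements already in $F_{[I_0]}$ and its $H$-conjugates sweeps $I_0$ across the interior; chaining the covering lemma along a suitable sequence of overlapping conjugates $F_{[I_0]}^{\,h}$ with $h\in H$ then gives $F_{[2^{-k},\,1-2^{-m}]}\leq H$ for all $k,m$, hence $F'\leq H$. The two expanding elements come for free from the hypotheses: since at least one of $a_0,b_0$ and at least one of $a_1,b_1$ is nonzero, at least one of $x,y$ has germ-slope $\neq 1$ at each endpoint.

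It remains to construct $y$ with $\pi(y)=(b_0,b_1)$ realising $F_{[I_0]}\leq H$. After conjugating $x$ into a convenient position relative to a fixed proper dyadic interval $I_0$, I would design $y$ so that a short list of words $w_1(x,y),\dots,w_r(x,y)$ are supported inside $I_0$ and restrict there to a generating set of $F_{[I_0]}\cong F$. The localisation of support is the point where the freedom in choosing $y$ is exploited: for instance, if a factor of $y$ is supported in a small dyadic interval $J\subseteq I_0$, then a commutator of $x^{g}$ with that factor is automatically the identity off $J$, so one builds enough such commutators, arranging them (by tuning the factors of $y$) to restrict on $I_0$ to conjugates of the standard generators $x_0,x_1$ of $F$. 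Together with the previous paragraph this completes the proof; the argument naturally divides into cases according to which of $a_0,a_1,b_0,b_1$ vanish (in particular the case $x\in F'$, where $x$ is compactly supported and only supplies ``interior noise'' while $y$ must carry all the endpoint expansion).

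The main obstacle is precisely this last construction. The element $x$ is an arbitrary nontrivial element of $K$: it may have tiny support, support concentrated near a single endpoint, or nontrivial slope at only one endpoint, so $y$ must be tailored to $x$ to guarantee that the localising words $w_i(x,y)$ are simultaneously (1) nontrivial, (2) supported in the prescribed interval $I_0$, and (3) collectively rich enough to generate a full copy of $F$ on $I_0$ — all while keeping $\pi(y)$ equal to $(b_0,b_1)$ on the nose. There is no danger of generating too much, since $\<x,y\>\leq K$ automatically; the entire difficulty is avoiding generating too little. Controlling the germs and supports of the commutators, handling the degenerate case $x\in F'$, and verifying that the covering/chaining in the second step genuinely reaches every proper dyadic subinterval of $(0,1)$ are the steps that require the most care.
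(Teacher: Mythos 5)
Your opening reduction --- it suffices to find $y$ with $\pi(y)=(b_0,b_1)$ and $F'\leq\<x,y\>$ --- is exactly the first step of the argument outlined in the paper (and used by Golan), and your justification of it is fine, as is the characterisation of $F'$ as the set of elements supported in a proper dyadic subinterval of $(0,1)$. From that point on, however, the proposal is a plan rather than a proof, and the plan omits precisely the hard part. You flag it yourself: everything hinges on constructing, for an \emph{arbitrary} nontrivial $x$ (possibly of tiny support, or with trivial germ at one endpoint), an element $y$ with the prescribed $\pi$-value such that certain words $w_1(x,y),\dots,w_r(x,y)$ are supported in a fixed dyadic interval $I_0$ and restrict there to a generating set of the full copy $F_{[I_0]}\cong F$. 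No mechanism for this is supplied, and the device that makes the analogous localisation work for $V$ (Example~\ref{ex:donoven_harper}) is unavailable here: there the pieces of $y$ have pairwise coprime finite orders and are recovered as powers of $y$, whereas $F$ is torsion-free, so the factors of $y$ cannot be isolated in this way, and producing even one full copy of $F_{[I_0]}$ inside $\<x,y\>$ is genuinely difficult. The proof the paper points to deliberately avoids this: one never exhibits $F_{[I_0]}\leq\<x,y\>$ directly, but instead verifies the weaker, checkable hypotheses of Theorem~\ref{thm:golan_criterion}(i) --- that the piecewise closure ${\rm Cl}(\<x,y\>)$ contains $F'$ (checked via the Stallings $2$-core machinery, as in Example~\ref{ex:bleak_harper_skipper}) together with a single element of $\<x,y\>$ having right derivative $2$ and left derivative $1$ at some dyadic point --- and the criterion then yields $\<x,y\>\geq F'$ with no covering or chaining at all.

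A second, smaller gap lies in your chaining step: you assert that iterating an element of $H=\<x,y\>$ with germ-slope $\neq 1$ at $0$ pushes $I_0$ arbitrarily close to $0$. The germ only controls the dynamics on some small neighbourhood $[0,\delta)$; such an element may well have interior fixed points between $\delta$ and $I_0$, in which case conjugating $F_{[I_0]}$ by its powers never carries $I_0$ past those fixed points. So even granting $F_{[I_0]}\leq H$, the sweep up to all of $F'$ needs a more careful argument (or a more carefully designed $y$). In summary: the reduction is correct and matches the paper, but the construction of $y$ and the verification $\<x,y\>\geq F'$ --- the actual content of the theorem --- are not carried out, and the route you sketch for them runs into obstacles that the closure-criterion approach was designed specifically to circumvent.
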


Theorem~\ref{thm:golan} has the following consequence, which asserts that $F$ is almost $\frac{3}{2}$-generated \cite[Theorem~1]{ref:GolanGen}.

\begin{corollary} \label{cor:golan}
Let $f \in F$ and assume that $\pi(f)$ is contained in a generating pair of $\pi(F)$. Then $f$ is contained in a generating pair of $F$.
\end{corollary}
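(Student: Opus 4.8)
Corollary~\ref{cor:golan} should follow from Theorem~\ref{thm:golan} by a short unwinding of definitions, and I would present it as a brief \texttt{proof} environment immediately after the statement. The key observation is that $\pi\colon F \to \Int^2$ is surjective with kernel $F'$, so a pair $\{\alpha,\beta\} \subseteq \Int^2$ generates $\pi(F) = \Int^2$ precisely when $\pi^{-1}(\<\alpha,\beta\>) = F$; this lets us translate the hypothesis ``$\pi(f)$ is contained in a generating pair of $\pi(F)$'' into a statement about $\pi^{-1}$ of that pair being all of $F$.

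\begin{proof}[Proof of Corollary~\ref{cor:golan}]
Write $\pi(f) = (a_0,a_1)$, and by hypothesis fix $(b_0,b_1) \in \Int^2$ such that $\<(a_0,a_1),(b_0,b_1)\> = \Int^2 = \pi(F)$. We first check that $f$ is nontrivial and that $\{a_0,b_0\} \neq \{0\}$ and $\{a_1,b_1\} \neq \{0\}$. Indeed, if $f = 1$ then $f$ lies in every subgroup of $F$, so the assertion is trivial; hence assume $f \neq 1$. If $\{a_0,b_0\} = \{0\}$, then $a_0 = b_0 = 0$, so $\<(a_0,a_1),(b_0,b_1)\>$ is contained in $\{0\} \times \Int \neq \Int^2$, contradicting the choice of $(b_0,b_1)$; the same argument rules out $\{a_1,b_1\} = \{0\}$. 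Thus the hypotheses of Theorem~\ref{thm:golan} are met, and there exists $y \in F$ with $\pi(y) = (b_0,b_1)$ and $\<f,y\> = \pi^{-1}(\<(a_0,a_1),(b_0,b_1)\>)$. But $\<(a_0,a_1),(b_0,b_1)\> = \Int^2 = \pi(F)$, so $\pi^{-1}(\<(a_0,a_1),(b_0,b_1)\>) = \pi^{-1}(\pi(F)) = F$. Therefore $\<f,y\> = F$, so $f$ is contained in the generating pair $\{f,y\}$ of $F$.
\end{proof}

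\textbf{Where the difficulty lies.} There is essentially no obstacle in the corollary itself: it is a pure bookkeeping argument, and the only points requiring care are the two edge cases (the trivial element, and checking the ``$\neq \{0\}$'' conditions are forced by surjectivity of $\pi$ onto $\Int^2$). All the real content is in Theorem~\ref{thm:golan}, whose proof — constructing an explicit $y \in F$ with prescribed image under $\pi$ that generates the right subgroup together with $x$ — is due to Golan and uses the structure of $F$ as a group of piecewise-linear homeomorphisms of $[0,1]$; since we are permitted to assume Theorem~\ref{thm:golan}, nothing further is needed here.
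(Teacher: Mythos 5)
Your derivation is essentially the intended one: the paper states Corollary~\ref{cor:golan} as an immediate consequence of Theorem~\ref{thm:golan} (citing Golan) without spelling out the bookkeeping, and your translation --- the hypothesis supplies $(b_0,b_1)$ with $\<(a_0,a_1),(b_0,b_1)\> = \Int^2$, the conditions $\{a_0,b_0\} \neq \{0\}$ and $\{a_1,b_1\} \neq \{0\}$ are forced by surjectivity onto $\Int^2$, and $\pi^{-1}(\Int^2) = F$ --- is exactly what is needed. One remark in your proof is wrong, though harmlessly so: if $f = 1$ the assertion is not trivial but false, since $\<1,y\> = \<y\>$ and $F$ is not cyclic, so the identity lies in no generating pair of $F$. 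The correct way to dispose of this case is to note that it cannot occur under the hypothesis: if $f = 1$ (indeed if $f \in F'$) then $\pi(f) = (0,0)$, and $\<(0,0),(b_0,b_1)\> = \<(b_0,b_1)\>$ is cyclic, so $\pi(f)$ lies in no generating pair of $\Int^2$. With that one sentence repaired, your argument is complete and matches the paper's (implicit) proof.
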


Theorem~\ref{thm:golan} also implies that every finitely generated normal subgroup of $F$ is $2$-generated. In particular, every finite index subgroup of $F$ is $2$-generated.

\vspace{0.5\baselineskip}

\textbf{Methods. Covering lemmas and a generation criterion. } We conclude the survey by discussing how Theorems~\ref{thm:bleak_harper_skipper} and~\ref{thm:golan} are proved in \cite{ref:BleakHarperSkipper} and \cite{ref:GolanGen}, respectively. Covering lemmas (analogues of Lemma~\ref{lem:covering_symmetric}), again, play a role. For $F$ and $T$, these results are well known, see \cite[Corollary~2.6 \& Lemma~2.7]{ref:BleakHarperSkipper} for example. (We call an interval $[a,b]$ \emph{dyadic} if $a,b \in \Int[\frac{1}{2}]$.)

\begin{lemma} \label{lem:covering_f}
Let $[a_1,b_1], \dots, [a_k,b_k] \subseteq [0,1]$ be dyadic intervals satisfying $\bigcup_{i=1}^k (a_i,b_i) = (0,1)$. Then $F_{[a_i,b_i]} \cong F$ for all $i$, and $F = \< F_{[a_1,b_1]}, \dots, F_{[a_k,b_k]} \>$.
\end{lemma}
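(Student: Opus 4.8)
The statement has two parts --- that each $F_{[a_i,b_i]}$ is a copy of $F$, and that together they generate $F$ --- and the engine for both is the self-similarity of $F$. We may assume each $(a_i,b_i)$ is a nonempty open interval (empty ones can be discarded from the cover and ignored). Given any nondegenerate dyadic interval $[c,d]\subseteq[0,1]$, fix a dyadic piecewise-linear homeomorphism $\phi\:[c,d]\to[0,1]$, that is, one with finitely many breakpoints in $\Int[\frac12]$ and slopes powers of $2$; such a $\phi$ exists for any dyadic $c<d$. Conjugation by $\phi$, namely $g\mapsto\phi\circ(g|_{[c,d]})\circ\phi^{-1}$, is then an isomorphism $F_{[c,d]}\to F$, and it carries $F_{[e,f]}$ onto $F_{[\phi(e),\phi(f)]}$ for every dyadic subinterval $[e,f]\subseteq[c,d]$, with $\phi$ sending dyadic intervals to dyadic intervals and overlapping covers to overlapping covers. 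This gives $F_{[a_i,b_i]}\cong F$, and it reduces any statement about the lattice of subgroups $F_{[e,f]}$ inside $F_{[c,d]}$ to the corresponding statement inside $F$.

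For the generation claim, the heart of the matter is a \emph{merging lemma}: if $[c,d]$ and $[e,f]$ are dyadic intervals with $(c,d)\cap(e,f)\neq\emptyset$, then $\<F_{[c,d]},F_{[e,f]}\>=F_{[\min(c,e),\,\max(d,f)]}$. Rescaling $[\min(c,e),\max(d,f)]$ to $[0,1]$ via the previous paragraph, it suffices to treat the case where one interval is $[0,b]$ and the other is $[a,1]$; if $a=0$ or $b=1$ one of them is all of $[0,1]$ and there is nothing to prove, so assume $0<a<b<1$ and set $H=\<F_{[0,b]},F_{[a,1]}\>$. The crucial observation is that for $g\in F_{[0,b]}$ we have $g\,F_{[a,1]}\,g^{-1}=F_{[g(a),1]}$, since $g$ is an increasing homeomorphism of $[0,1]$ fixing $[b,1]$ pointwise; and since $F_{[0,b]}\cong F$ acts transitively on the dyadic points of $(0,b)$, this yields $F_{[\delta,1]}\leq H$ for every dyadic $\delta\in(0,b)$. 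The union over $\delta\to 0^+$ is exactly $\pi^{-1}(\{0\}\times\Int)=\{\,f\in F\mid f'(0^+)=1\,\}$, where $\pi\:F\to\Int^2$ is the germ homomorphism with $\ker\pi=F'$; here one uses that an element of $F$ with slope $1$ at $0^+$ has finitely many breakpoints and so is the identity near $0$. Symmetrically, conjugating $F_{[0,b]}$ by elements of $F_{[a,1]}$ gives $\pi^{-1}(\Int\times\{0\})\leq H$. Both of these subgroups contain $F'=\ker\pi$, and their $\pi$-images generate $\Int^2=\pi(F)$, so $H$ contains $F'$ and surjects onto $F/F'$; hence $H=F$.

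With the merging lemma in hand, the full statement follows by a routine bookkeeping argument. From the finite cover $(0,1)=\bigcup_{i=1}^k(a_i,b_i)$ one extracts, by walking from $0$ towards $1$ and discarding redundant intervals, a sub-collection forming an overlapping chain: after reindexing, $a_1=0$, $b_k=1$, $b_1<b_2<\cdots<b_k$, and $a_{i+1}<b_i$ for all $i$. Discarded intervals only enlarge the generated subgroup, so it is enough to show the chain generates $F$. Applying the merging lemma repeatedly gives $\<F_{[a_1,b_1]},F_{[a_2,b_2]}\>=F_{[0,b_2]}$, then $\<F_{[0,b_2]},F_{[a_3,b_3]}\>=F_{[0,b_3]}$ (the overlap being $(a_3,b_2)\neq\emptyset$), and so on, ending with $F_{[0,b_k]}=F_{[0,1]}=F$.

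The \textbf{main point} requiring care is the base case $\<F_{[0,b]},F_{[a,1]}\>=F$: the conjugation identity $g F_{[a,1]} g^{-1}=F_{[g(a),1]}$ and the identification of $\bigcup_{\delta}F_{[\delta,1]}$ with $\pi^{-1}(\{0\}\times\Int)$ both rest on the specific structure of $F$ as a group of dyadic piecewise-linear homeomorphisms, together with the transitivity of $F$ on dyadic points; the rescaling isomorphism and the chain extraction are then straightforward. The same scheme, working with $\S^1$ in place of $[0,1]$, yields the analogous covering lemma for $T$.
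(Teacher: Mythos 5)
The survey does not actually prove this lemma: it records it as well known and points to \cite{ref:BleakHarperSkipper} (Corollary~2.6 and Lemma~2.7 there), so there is no in-paper argument to measure you against. Your proof is correct and self-contained. The two pillars are sound: (a) conjugation by a dyadic PL homeomorphism $[c,d]\to[0,1]$ identifies $F_{[c,d]}$ with $F$ and respects the lattice of interval subgroups, and (b) the merging step $\langle F_{[0,b]},F_{[a,1]}\rangle=F$ for dyadic $0<a<b<1$, which you get from the conjugation identity $gF_{[a,1]}g^{-1}=F_{[g(a),1]}$, transitivity of $F$ on dyadic points, the observation that a PL element with slope $1$ at an endpoint is the identity near that endpoint (so the nested union $\bigcup_{\delta}F_{[\delta,1]}$ is exactly $\pi^{-1}(\{0\}\times\Int)$), and then the standard ``contains $\ker\pi=F'$ and surjects onto $\Int^2$'' conclusion. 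The chain-extraction step is also fine; the only points worth making explicit are that finiteness of the cover forces some interval to have left endpoint $0$ and some to have right endpoint $1$ (otherwise points near an endpoint would be uncovered), and that each step of the greedy walk uses a genuinely new interval because the $b_i$ strictly increase, so the process terminates at $b_k=1$. Compared with the route the survey gestures at, which derives the covering lemmas from the machinery of \cite{ref:BleakHarperSkipper}, your argument has the advantage of using only classical facts about $F$ (transitivity on dyadics, the germ homomorphism $\pi$ with kernel $F'$), and it dovetails nicely with the germ-based criteria of Golan quoted later in the same section; the citation route, on the other hand, is what generalises to $T$ and to the vigorous-group setting where one cannot lean on the abelianisation so directly (though, as you note, the same two-interval merging scheme does adapt to $\S^1$ with rotations supplying the analogue of the germ bookkeeping). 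One cosmetic remark: discarding degenerate intervals is the right reading, since $F_{[a,a]}$ is trivial, so the statement implicitly assumes $a_i<b_i$.
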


\begin{lemma} \label{lem:covering_t}
Let $[a_1,b_1], \dots, [a_k,b_k] \subseteq \S^1$ be dyadic intervals satisfying $\bigcup_{i=1}^k (a_i,b_i) = \S^1$. Then $T_{[a_i,b_i]} \cong F$ for all $i$, and $T = \< T_{[a_1,b_1]}, \dots, T_{[a_k,b_k]} \>$.
\end{lemma}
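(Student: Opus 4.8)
The plan is to treat the two assertions separately, reducing the generation statement to the already-established covering lemma for $F$ (Lemma~\ref{lem:covering_f}) by a Lebesgue-number argument and then closing with the simplicity of $T$.

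First I would dispose of $T_{[a_i,b_i]}\cong F$. We may assume each $[a_i,b_i]$ is a proper closed arc of $\S^1$ (if some $[a_i,b_i]=\S^1$ the lemma is trivial). For a proper closed dyadic arc $[a,b]\subsetneq\S^1$, an element of $T$ supported on $[a,b]$ fixes $a$ and $b$ and restricts to a piecewise-linear homeomorphism of $[a,b]$ with dyadic breakpoints and slopes integer powers of $2$ fixing the endpoints; conversely, every such homeomorphism of $[a,b]$ extends by the identity to an element of $T$. Identifying $[a,b]$ with $[0,1]$ by the canonical dyadic rescaling (carrying the level-$n$ dyadic subdivision of $[a,b]$ onto that of $[0,1]$) then yields $T_{[a,b]}\cong F$, and in particular $T_{[a_i,b_i]}\cong F$.

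For the generation statement, set $H=\<T_{[a_1,b_1]},\dots,T_{[a_k,b_k]}\>$. The key step is to show $T_{[c,d]}\leq H$ for \emph{every} proper closed dyadic arc $[c,d]\subsetneq\S^1$. Since $\S^1$ is compact, the finite open cover $\{(a_i,b_i)\}$ has a Lebesgue number $\delta>0$. Taking a sufficiently fine dyadic subdivision $c=p_0,p_1,\dots,p_m=d$ of $[c,d]$ (listed in arc order), every arc $[p_{j-1},p_{j+1}]$ has length below $\delta$, hence lies in some $(a_{i(j)},b_{i(j)})$, so $T_{[p_{j-1},p_{j+1}]}\leq T_{[a_{i(j)},b_{i(j)}]}\leq H$; moreover $\bigcup_{j=1}^{m-1}(p_{j-1},p_{j+1})=(c,d)$. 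Applying Lemma~\ref{lem:covering_f} inside $T_{[c,d]}\cong F$ then gives $T_{[c,d]}=\<T_{[p_{j-1},p_{j+1}]}:1\leq j\leq m-1\>\leq H$. Now let $N$ be the subgroup of $T$ generated by all $T_{[c,d]}$ with $[c,d]\subsetneq\S^1$ a proper closed dyadic arc. Conjugates of such a subgroup by elements of $T$ are again of this form, since elements of $T$ are dyadic piecewise-linear homeomorphisms and so carry proper closed dyadic arcs bijectively to proper closed dyadic arcs; hence $N$ is normal in $T$, and it is nontrivial (e.g.\ $T_{[0,1/2]}\cong F$). As $T$ is simple, $N=T$, and $N\leq H$ by the previous step, so $H=T$.

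\textbf{The main obstacle} is not any single hard step — the lemma is folklore, and the analogues for $F$ (Lemma~\ref{lem:covering_f}) and for $\Sm{n}$ (Lemma~\ref{lem:covering_symmetric}) are proved by the same template — but rather the bookkeeping: one must set up the identifications $T_{[a,b]}\cong F$ carefully and chain several applications of Lemma~\ref{lem:covering_f}. The one genuinely useful idea is the Lebesgue-number reduction, which replaces the given cover by arbitrarily fine arcs each sitting inside a single chart and then bootstraps. If one prefers to avoid invoking simplicity of $T$: given a nontrivial $g\in T$, move a dyadic point $p$ to $q=pg$ by some $h\in T_{[c,d]}\leq H$ with $p,q\in(c,d)$ (possible since $F$ is transitive on dyadic points), so that $gh^{-1}\in\mathrm{Stab}_T(q)\cong F$, which by Lemma~\ref{lem:covering_f} lies in $\<T_{[c',d']}:q\notin[c',d']\>\leq H$; hence $g\in H$.
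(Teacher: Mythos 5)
The survey does not prove this lemma at all: it is quoted as well known, with a pointer to \cite[Corollary~2.6 \& Lemma~2.7]{ref:BleakHarperSkipper}. So there is no in-paper argument to compare against; judged on its own terms, your overall structure is sound and is the standard folklore route: identify each $T_{[a,b]}$ with $F$, use a Lebesgue-number refinement plus Lemma~\ref{lem:covering_f} to show $T_{[c,d]}\leq H$ for \emph{every} proper dyadic arc $[c,d]$, and then finish either by normality-plus-simplicity of $T$ (legitimate here, since simplicity of $T$ is classical and nothing in the survey's use of this lemma depends on it) or by your point-stabiliser aside.

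Two points need repair. First, the isomorphism $T_{[a,b]}\cong F$ cannot be implemented by the ``canonical dyadic rescaling carrying the level-$n$ dyadic subdivision of $[a,b]$ onto that of $[0,1]$'': that map is the affine rescaling $x\mapsto(x-a)/(b-a)$, and when the arc length is not a power of $2$ (e.g.\ $[a,b]=[0,3/8]$, slope $8/3$) it neither conjugates $T_{[a,b]}$ into $F$ nor sends dyadic points to dyadic points. You must conjugate instead by a piecewise-linear homeomorphism $[a,b]\to[0,1]$ with dyadic breakpoints and power-of-two slopes, obtained by matching standard dyadic partitions of the two intervals; such a map exists for every dyadic arc, and only with this choice do your subdivision points $p_j$ land on dyadic rationals, so that the transported subgroups really are of the form $F_{[\,\cdot\,,\,\cdot\,]}$ with dyadic endpoints and Lemma~\ref{lem:covering_f} applies. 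This correction matters for the main argument, not just for the isomorphism statement. Second, in the optional simplicity-free aside, the condition $q\notin[c',d']$ cannot work: finitely many closed arcs missing $q$ have compact union, so their interiors cannot cover $\S^1\setminus\{q\}$. The correct condition is $q\notin(c',d')$, i.e.\ arcs with $q$ as an endpoint are allowed; the stabiliser of $q$, identified with $F$ by cutting the circle at $q$, is generated by two such subgroups via Lemma~\ref{lem:covering_f}, and those subgroups are handled by your main step. (Also, if some $[a_i,b_i]$ were all of $\S^1$ the first assertion of the lemma would be false rather than trivial; the hypothesis is implicitly that the $[a_i,b_i]$ are proper arcs with dyadic endpoints.) With these adjustments your proof is complete.
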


Another key ingredient is a criterion due to Golan, for which we need some further notation. Fix a subgroup $H \leq F$. An element $f \in F \leq \Homeo([0,1])$ is \emph{piecewise-$H$} if there is a finite subdivision of $[0,1]$ such that on each interval in the subdivision, $f$ coincides with an element of $H$. The closure of $H$, written ${\rm Cl}(H)$, is the subgroup of $F$ containing all elements that are piecewise-$H$. The following result combines \cite[Theorem~1.3]{ref:GolanMAMS} with \cite[Theorem~1.3]{ref:GolanMax}.

\begin{theorem} \label{thm:golan_criterion}
Let $H \leq F$. Then the following hold:
\begin{enumerate}
\item $H \geq F'$ if and only if ${\rm Cl}(H) \geq F'$ and there exist $f \in H$ and a dyadic $\omega \in (0,1)$ such that $f'(\omega^+)=2$ and $f'(\omega^-)=1$
\item $H = F$ if and only if ${\rm Cl}(H) \geq F'$ and there exist $f,g \in H$ such that $f'(0^+)=g'(1^-)=2$ and $f'(1^-)=g'(0^+)=1$.
\end{enumerate}
\end{theorem}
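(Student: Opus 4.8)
The plan is to prove each biconditional by first disposing of the ``only if'' directions, which are routine, and then concentrating on the ``if'' directions, which are the substance of \cite{ref:GolanMAMS} and \cite{ref:GolanMax} respectively. For the forward directions, since $\mathrm{Cl}(H) \supseteq H$, the hypothesis $H \geq F'$ (resp.\ $H = F$) at once gives $\mathrm{Cl}(H) \geq F'$, and the germ conditions are witnessed by explicit elements: $F'$ contains a homeomorphism with a break of the prescribed type at any interior dyadic point --- rescale the standard generator $x_0$ onto a dyadic interval $[\omega,\omega'] \subsetneq (0,1)$ and extend by the identity, obtaining an element of $F'$ with left slope $1$ and right slope $2$ at $\omega$ --- while $F$ contains an element $f$ that is the identity near $1$ and has slope $2$ near $0$, together with its mirror image $g$, so that $f'(0^+)=g'(1^-)=2$, $f'(1^-)=g'(0^+)=1$, and hence $\pi(f) = (1,0)$ and $\pi(g) = (0,1)$.

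For the reverse direction of (i), assume $\mathrm{Cl}(H) \geq F'$ and that $f \in H$ has an interior break at the dyadic point $\omega$ with $f'(\omega^-) = 1$ and $f'(\omega^+) = 2$; the goal is $F' \leq H$. Each $h \in F'$ is supported on some dyadic interval $[\varepsilon, 1-\varepsilon]$, and by repeated application of the covering lemma (Lemma~\ref{lem:covering_f}), $F'$ is generated by the subgroups $F_{[a,b]}$ for (arbitrarily short) dyadic intervals $[a,b] \subseteq (0,1)$, so it suffices to embed a copy of $F_{[a,b]}$ into $H$ for each such interval. The break of $f$ is the engine here: using products and conjugates of $f$, together with the hypothesis $\mathrm{Cl}(H) \geq F'$ --- which says precisely that each $h \in F'$ agrees, on the pieces of some dyadic subdivision, with elements $h_1, \dots, h_m \in H$ --- one builds elements of $H$ supported on arbitrarily short dyadic intervals around $\omega$ realising prescribed rearrangements there, and one builds (again inside $H$) the conjugators needed to transport these to other dyadic intervals. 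An induction down the dyadic tree then assembles an arbitrary $h \in F'$ as a product of such elements, giving $h \in H$. This interlocking construction is the technical heart of \cite{ref:GolanMAMS}.

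For the reverse direction of (ii), assume $\mathrm{Cl}(H) \geq F'$ and the two endpoint elements $f,g \in H$. Since $\pi(f) = (1,0)$ and $\pi(g) = (0,1)$ generate $\pi(F) = \Int^2$, we have $\pi(H) = \pi(F)$, so it is enough to show $H \geq F'$; then $H \supseteq F' = \ker\pi$ together with $\pi(H) = \pi(F)$ forces $H = F$. One cannot simply invoke part (i) here, since a germ at an endpoint of $[0,1]$ cannot be conjugated into the interior within $F$ (every element of $F$ fixes $0$ and $1$); instead one runs a variant of the construction above adapted to the pair of endpoint germs, which is carried out in \cite{ref:GolanMax}. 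In both parts the main obstacle is precisely this upgrade step: passing from the soft statement that $H$ is large modulo closure to the hard statement that $H$ actually contains $F'$, working only with a single prescribed germ (interior in (i), or the two endpoint germs in (ii)) and with conjugators that are themselves required to lie in $H$; the remaining ingredients --- the covering lemmas, the bookkeeping in the abelianisation $\pi$, and the explicit germ witnesses --- are straightforward by comparison.
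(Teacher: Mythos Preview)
The paper does not prove this theorem at all: it simply records it as a combination of \cite[Theorem~1.3]{ref:GolanMAMS} and \cite[Theorem~1.3]{ref:GolanMax} and moves on. Your proposal is therefore not competing against a proof in the paper, and in fact you arrive at the same conclusion the paper does --- the forward directions are routine (and your witnesses for the germ conditions are fine), while the substantive backward directions are deferred to Golan's papers.

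That said, what you have written for the ``if'' directions is an outline, not a proof. Phrases like ``using products and conjugates of $f$ \dots\ one builds elements of $H$ supported on arbitrarily short dyadic intervals around $\omega$ realising prescribed rearrangements there, and one builds (again inside $H$) the conjugators needed to transport these to other dyadic intervals'' correctly describe the \emph{shape} of Golan's argument, but the actual construction --- showing that the single interior break of $f$, together with piecewise-$H$ approximants, suffices to manufacture honest elements of $H$ realising any element of $F'$ --- is long and delicate, and is precisely the content of \cite{ref:GolanMAMS}. Likewise your reduction in (ii) (that $\pi(f)=(1,0)$, $\pi(g)=(0,1)$ give $\pi(H)=\Int^2$, so it suffices to show $H\geq F'$) is correct, and your observation that part~(i) cannot be invoked directly because endpoint germs cannot be conjugated into the interior is exactly the reason a separate argument is needed in \cite{ref:GolanMax}. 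So your proposal is accurate as a roadmap and matches the paper's treatment, but it should not be read as a self-contained proof.
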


We now discuss the proof of Theorem~\ref{thm:bleak_harper_skipper} on $T$ given by Bleak, Harper and Skipper \cite{ref:BleakHarperSkipper}. By Lemma~\ref{lem:covering_t}, for each nontrivial $x \in T$ it suffices to find a dyadic interval $[a,b] \subseteq \S^1$ and $y \in s^T$ such that $\bigcup_{g \in \< x,y \>} (a,b)g = \S^1$ and $T_{[a,b]} \leq \< x, y \>$. If $|x|$ is infinite, a dynamical argument is used (for any infinite order element $s$), see \cite[Proposition~3.1]{ref:DonovenHarper20}. The key ingredients for finite $|x|$ are highlighted in the following example.

\begin{example} \label{ex:bleak_harper_skipper}
Let $x \in T$ be a nontrivial torsion element. We will prove that there exists $y \in s^T$ (for $s$ as in Figure~\ref{fig:elements}) such that $\< x, y \> = T$. By replacing $x$ by a power if necessary, $x$ has rotation number $\frac{1}{p}$ for prime $p$. For exposition, we only discuss the case $p \geq 5$. Since any two torsion elements of $T$ with the same rotation number are conjugate, by replacing $x$ by a conjugate if necessary, $x = (00, 01, 10, 110, \dots, 1^{p-3}0, 1^{p-2}) \mapsto (01, 10, 110, \dots, 1^{p-3}0, 1^{p-2}, 00)$. 

We claim that $T = \< x, s \>$. By Lemma~\ref{lem:covering_t}, since $\S^1 = \bigcup_{i \in \Int} (0,\frac{7}{8})x^i$, it suffices to prove that $T_{[0,\frac{7}{8}]} \leq \< x, s \>$. Indeed, we claim that $T_{[0,\frac{7}{8}]} = \<y_0,y_1\>$ for $y_0 = s$ and $y_1 = s^x$. Defining $t\:(0,\frac{7}{8}) \to (0,1)$ as $\omega t = \omega$ if $\omega \leq \frac{3}{4}$ and $\omega t = 2\omega-\frac{3}{4}$ if $\omega > \frac{3}{4}$, it suffices to prove that $\< y_0^t, y_1^t \> = (T_{[0,\frac{7}{8}]})^t = F$. 

To do this, we apply Theorem~\ref{thm:golan_criterion}(ii). To verify the second condition, choose $f = y_0^t$ and $g = (y_1^t)^{-1}$, so $f'(0^+)=g'(1^-)=2$ and $f'(1^-)=g'(0^+)=1$. It remains to prove that ${\rm Cl}(\< y_0^t, y_1^t \>) \geq F'$. Here we apply another criterion: for $g_1, \dots, g_k \in F$ we have $\< g_1, \dots, g_k \> \geq F'$ if and only if the Stallings $2$-core of $\<g_1, \dots, g_k\>$ equals the Stallings $2$-core of $F$ \cite[Lemma~7.1 \& Remark~7.2]{ref:GolanMAMS}. The \emph{Stallings $2$-core} is a directed graph associated to a diagram group introduced by Guba and Sapir \cite{ref:GubaSapir97}. Given elements $g_1, \dots, g_k \in F$ represented as tree pairs, there is a short combinatorial algorithm to find the Stallings 2-core of $\< g_1, \dots, g_k \>$, and it is straightforward to compute the Stallings 2-core of $\< y_0^t, y_1^t \>$ and note that it is the Stallings 2-core of $F$ (see the proof of \cite[Proposition~3.2]{ref:BleakHarperSkipper}). Therefore, $F = \< y_0^t, y_1^t \>$, completing the proof that $T = \< x, s \>$.
\end{example}

We conclude by briefly outlining the proof of Theorem~\ref{thm:golan} on $F$ given by Golan \cite{ref:GolanGen}, which uses similar methods to those in \cite{ref:BleakHarperSkipper} on $T$ and \cite{ref:DonovenHarper20} on $V$. Let $(a_0,a_1), (b_0,b_1) \in \Int^2$ with $\{a_0,b_0\} \neq \{0\}$ and $\{a_1,b_1\} \neq \{0\}$, and let $x \in F \setminus 1$ with $\pi(x) = (a_0,a_1)$. Observe that it suffices to find an element $y$ such that $\pi(y) = (b_0,b_1)$ and $\< x,y \> \geq F'$. In \cite{ref:GolanGen}, an explicit choice of $y$, based on $x$, is given and the condition $\< x,y \> \geq F'$ is verified via Theorem~\ref{thm:golan_criterion}(i).

\end{document}